\theoremstyle{plain}
\newtheorem{theorem}{Theorem}[section]
\theoremstyle{remark}
\newtheorem{remark}[theorem]{\textbf{Remark}}
\newtheorem{example}[theorem]{\textbf{Example}}
\theoremstyle{plain}
\newtheorem{lemma}[theorem]{Lemma}
\newtheorem{proposition}[theorem]{Proposition}
\newtheorem{definition}[theorem]{Definition}
\numberwithin{equation}{section}
\def\N{{\mathbb N}}
\def\R{{\mathbb R}}
\def\C{{\mathbb C}}
\newcommand{\E}{{\mathbb E}}
\renewcommand{\P}{{\mathbb P}}
\newcommand{\F}{{\mathcal F}}
\newcommand{\g}{\gamma}
\newcommand{\e}{\varepsilon}
\renewcommand{\O}{\Omega}
\newcommand{\calL}{{\mathcal B}}
\newcommand{\n}{\Vert}
\newcommand{\one}{{{\bf 1}}}
\newcommand{\lb}{\langle}
\newcommand{\rb}{\rangle}
\newcommand{\limn}{\lim_{n\to\infty}}
\newcommand{\tE}{{\widetilde E}}
\newcommand{\dbn}{|\!|\!|}
\begin{document}

\title
[Non-autonomous stochastic evolution equations]{Non-autonomous stochastic
evolution equations and applications to stochastic partial differential
equations}

\author{Mark Veraar}
\address{Delft Institute of Applied Mathematics\\
Delft University of Technology \\ P.O. Box 5031\\ 2600 GA Delft\\The
Netherlands} \email{mark@profsonline.nl, M.C.Veraar@tudelft.nl}

\thanks{The author is supported by the Alexander von Humboldt foundation}

\subjclass[2000]{Primary: 60H15 Secondary: 35R60, 47D06}







\keywords{parabolic stochastic evolution equation, multiplicative
noise, non-autonomous equations, mild solution, variational
solution, type $2$, UMD, stochastic convolution, factorization
method, space-time regularity, maximal regularity,
$H^\infty$-calculus, stochastic partial differential equation}

\date\today

\begin{abstract}

In this paper we study the following non-autonomous stochastic
evolution equation on a Banach space $E$,
\begin{equation}\label{eq:SEab}\tag{SE}
\left\{\begin{aligned}
dU(t) & = (A(t)U(t) + F(t,U(t)))\,dt + B(t,U(t))\,dW_H(t), \quad t\in [0,T],\\
 U(0) & = u_0.
\end{aligned}
\right.
\end{equation}
Here $(A(t))_{t\in [0,T]}$ are unbounded operators with domains
$(D(A(t)))_{t\in [0,T]}$ which may be time dependent. We assume that
$(A(t))_{t\in [0,T]}$ satisfies the conditions of Acquistapace and Terreni. The
functions $F$ and $B$ are nonlinear functions defined on certain interpolation
spaces and $u_0\in E$ is the initial value. $W_H$ is a cylindrical Brownian
motion on a separable Hilbert space $H$. We assume that the Banach space $E$ is
a UMD space with type $2$.

Under locally Lipschitz conditions we show that there exists a unique local
mild solution of \eqref{eq:SEab}. If the coefficients also satisfy a linear
growth condition, then it is shown that the solution exists globally. Under
assumptions on the interpolation spaces we extend the factorization method of
Da Prato, Kwapie\'n, and Zabczyk, to obtain space-time regularity results for
the solution $U$ of \eqref{eq:SEab}. For Hilbert spaces $E$ we obtain a maximal
regularity result. The results improve several previous results from the
literature.

The theory is applied to a second order stochastic partial differential
equation which has been studied by Sanz-Sol\'e and Vuillermot. This leads to
several improvements of their result.

\end{abstract}

\maketitle

\section{Introduction}

Let $E$ be a Banach space and $H$ be a separable Hilbert space. Let
$(\O,\F,\P)$ be a complete probability space with a filtration $(\F_t)_{t\in
[0,T]}$. In this paper we study the following stochastic evolution equation on
$E$:
\begin{equation}\label{eq:SEabintro}\tag{SE}
\left\{\begin{aligned}
dU(t) & = (A(t)U(t) + F(t,U(t)))\,dt + B(t,U(t))\,dW_H(t), \quad t\in [0,T],\\
 U(0) & = u_0.
\end{aligned}
\right.
\end{equation}
Here the operators $(A(t))_{t\in [0,T]}$ are unbounded and have domains
$(D(A(t)))_{t\in [0,T]}$ which may be time dependent. The functions
$F:[0,T]\times\O\times E\to E$ and $B:[0,T]\times\O\times E\to \calL(H,E)$ are
measurable and adapted functions and locally Lipschitz in a suitable way.
$W_H$ is a cylindrical Brownian motion with respect to $(\F_{t})_{t\in [0,T]}$
on a separable Hilbert space $H$. $u_0$ is an $\F_0$-measurable initial value.

Since the seventies, the problem \eqref{eq:SEabintro} has been studied by many
authors. We cannot give a complete description of the literature, but let us
give references to some selection of papers.

The method based on monotonicity of operators of \cite{Lio69} has
been applied to \eqref{eq:SEabintro} for instance in \cite{KR79} by
Krylov and Rozovski{\u\i} and in \cite{Par75,Par2} by Pardoux. We
will not discuss this method in more detail. For this we refer to
the monograph \cite{Rozov} of Rozovski{\u\i}.

In \cite{Da75}, Dawson used semigroup methods to study \eqref{eq:SEabintro} in
the autonomous case ($A$ is constant). This work has been further developed by
Da Prato and Zabczyk and their collaborators (cf.\ \cite{DPZ,DPZ92} and
references therein). In \cite{Sei}, Seidler considered the non-autonomous case
with $D(A(t))$ constant in time. In the above mentioned works, the authors
mainly considered their equation in a Hilbert space $E$. In \cite{Brz1,Brz2}
Brze{\'z}niak considered the autonomous case of \eqref{eq:SEabintro} in a UMD
space $E$ with type $2$ space (or even in martingale type $2$ spaces $E$).
This allows one to consider \eqref{eq:SEabintro} in $L^p$-spaces with $p\in
[2,\infty)$. Recently in \cite{NVW3}, van Neerven, Weis and the author
considered the autonomous case of \eqref{eq:SEabintro} in Banach spaces $E$
which include all $L^p$-spaces with $p\in [1,\infty)$. In \cite{VeZi}
Zimmerschied and the author studied \eqref{eq:SEabintro} with additive noise on
a general Banach space, and some parts of the current paper build on these ideas.

There are also many important papers where only $L^p$-spaces are considered.
Note that all of them always have the restriction that $p\in [2, \infty)$. Let
us first mention the works of Krylov and collaborators (see \cite{Kry} and
references therein). In these papers the authors use sophisticated methods
from partial differential equations and probability theory to obtain strong
space-regularity results for non-autonomous equations. Usually only second
order equations are considered and the methods are not based on semigroup
techniques. We explain some papers which use $L^p$-methods and semigroup
methods. In the paper of Manthey and Zausinger \cite{MZ} (also see their
references) $L^p$-methods and comparison methods are used to obtain global
existence results for the case where $F$ is non-necessarily of linear growth.
Let us mention that they also allow $D(A(t))$ to depend on time. However, they
do not give a systematic study of space-time regularity results. We believe it
is important to extend the ideas from \cite{MZ} to our general framework. This
could lead to interesting new global existence results. Also Cerrai
\cite{Cer}, Sanz-Sol\'e and Vuillermot \cite{SSV2,SSV}, and Zhang \cite{Zha}
consider $L^p$-methods. The papers \cite{SSV2,SSV} were the starting point of
our paper. The equation in \cite{SSV2,SSV} is a second order equation with
time-dependent boundary conditions. Below we consider it as our model problem.
%

In this paper we give a systematic theory for parabolic semi-linear stochastic
evolution equations, where $D(A(t))$ depends on time. It seems that such a
systematic study is new even in the Hilbert space setting. We study the
equation \eqref{eq:SEabintro} in a UMD space $E$ with type $2$. This class of
spaces includes all $L^p$-spaces with $p\in [2, \infty)$.
Although, a stochastic integration theory for processes with values in a general UMD is available \cite{NVW1}, we restrict ourselves to spaces with type $2$ in order to have a richer class of integrable processes (cf.\ Proposition \ref{prop:stochinttype2}). Note that the theory of \cite{NVW1} was applied in \cite{NVW3} for general UMD spaces, but only for autonomous equations. In order to consider nonautonomous equations
it seems that one needs additional assumptions on $A(t)$, and due to the extra technical
difficulties we will not consider this situation here.

Throughout the paper we assume that $(A(t))_{t\in [0,T]}$ satisfies the conditions of
Acquistapace and Terreni (AT1) and (AT2) (cf.\ \cite{AT2} and Section
\ref{sec:parabolic} below). These conditions are well-understood and widely
used in the literature. Let us mention that our results generalizes the main
setting of \cite{Brz2,DPZ92,Sei} in several ways. To prove regularity of the
solution we extend the factorization method of Da Prato, Kwapie\'n, and
Zabczyk. This well-known method gives space-time regularity of stochastic
convolutions. Compared to the known results, the main difficulty in our
version of the factorization method is that $D(A(t))$ is time dependent. For
Hilbert space $E$ we obtain a maximal regularity result. This extends the
result \cite[Theorem 6.14]{DPZ} to the non-autonomous case. The main tool in
our approach to maximal regularity is McIntosh's $H^\infty$-calculus
\cite{McI}.

To avoid technicalities at this point we will explain one of our
main results in a simplified setting. Assume the functions $F$ and
$B$ defined on $E$ are Lipschitz uniformly in $[0,T]\times\O$ (see \ref{as:LipschitzFtype} and \ref{as:LipschitzBtype} in Section \ref{sec:abstracteq} where a more general situation is considered).
In Section \ref{sec:Lipcoefinttype} we show that
\eqref{eq:SEabintro} has a unique mild solution. A strongly
measurable and adapted process $U:[0,T]\times\O\to E$ is called a
{\em mild solution} if for all $t\in [0,T]$, almost surely
\[U(t) = P(t,0) u_0 + P*F(\cdot,U)(t) + P\diamond B(\cdot,U)(t).\]
Here $(P(t,s))_{0\leq s\leq t\leq T}$ denotes the evolution system generated by
$(A(t))_{t\in [0,T]}$ and
\[P*F(t) = \int_0^t P(t,s)F(s,U(s))\, ds, \ \ P\diamond B(t) = \int_0^t P(t,s)B(s,U(s))\,
dW_H(s).\]
In Section \ref{sec:abstracteq} we also introduce so-called variational solutions in a general setting and show that they are equivalent to mild solutions.

We state a simplified formulation of one of our main results Theorem
\ref{thm:mainexistencegeninitial}. The hypothesis (AT1) and (AT2) are
introduced in Section \ref{sec:parabolic}. Hypothesis \ref{as:isom} is
introduced on page \pageref{as:isom}, and Hypotheses \ref{as:LipschitzFtype}
and \ref{as:LipschitzBtype} can be found in Section \ref{sec:abstracteq}.

\begin{theorem}\label{thm:mainexistencegeninitialintro}
Assume (AT1), (AT2), \ref{as:isom}, \ref{as:LipschitzFtype} and
\ref{as:LipschitzBtype} with $a=\theta=0$. Let $u_0:\O\to E$ be strongly $\F_0$
measurable. Then the following assertions hold:
\begin{enumerate}
\item There exists a unique mild solution $U$ of \eqref{eq:SEabintro} with
paths in $C([0,T];E)$ almost surely.

\item If $u_0\in (E,D(A(0))_{\eta,2}$ for some $\eta\in [0,\frac12]$, then for every
$\delta,\lambda>0$ with $\delta+\lambda <\eta$ there exists a version of $U$
with paths in $C^\lambda([0,T];\tE_{\delta})$.
\end{enumerate}
\end{theorem}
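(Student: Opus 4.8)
The plan is to set up a fixed-point argument in a suitable space of adapted processes with continuous paths, following the by-now standard scheme for semilinear stochastic evolution equations, but carefully tracking the time-dependence of $D(A(t))$ through the Acquistapace--Terreni estimates on the evolution family $(P(t,s))$. First I would define, for fixed $t$, the solution map
\[
L(V)(t) = P(t,0)u_0 + (P*F(\cdot,V))(t) + (P\diamond B(\cdot,V))(t)
\]
on the Banach space of continuous adapted processes $V:[0,T]\times\O\to E$ with $\sup_{t}\E\|V(t)\|^p<\infty$ for a large exponent $p$ (or on the space $L^p(\O;C([0,T];E))$), and show $L$ is a strict contraction on a small time interval $[0,T_0]$. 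The deterministic convolution term is handled by the usual estimate $\|P(t,s)\|\lesssim (t-s)^{-\alpha}$ for mapping into the relevant interpolation space together with the Lipschitz assumption \ref{as:LipschitzFtype}; the stochastic convolution term is estimated using the type $2$ / UMD stochastic integration inequality (Proposition \ref{prop:stochinttype2}) combined with the $\gamma$-boundedness or square-function bounds for $(P(t,s))$ that follow from (AT1)--(AT2) and Hypothesis \ref{as:isom}, again using \ref{as:LipschitzBtype}. Choosing $T_0$ small enough that the sum of the two Lipschitz constants is $<1$ gives a unique fixed point on $[0,T_0]$; then one iterates on $[T_0,2T_0]$, etc., using the evolution property $P(t,s)=P(t,r)P(r,s)$ and the fact that the Lipschitz constants are uniform in the starting time, to reach a unique mild solution on all of $[0,T]$. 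Uniqueness on $[0,T]$ follows by a Gronwall-type argument once local uniqueness is known.

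For part (1), the continuity of paths requires a little more than the fixed point itself: I would show that each of the three terms defines a process with a continuous version. For $t\mapsto P(t,0)u_0$ this is the strong continuity of the evolution family; for $P*F(\cdot,U)$ one uses the standard estimate showing the deterministic convolution of an $L^\infty$-in-time, $E$-valued function with $(P(t,s))$ lands in $C([0,T];E)$ (in fact in a slightly better interpolation space); and for $P\diamond B(\cdot,U)$ one invokes the factorization method — write $P\diamond B$ as $R_\alpha$ applied to a stochastic convolution with a negative-power kernel, use that the latter has $L^p$-in-time paths by the type $2$ estimate, and that $R_\alpha$ maps $L^p(0,T;E)$ into $C([0,T];\tE_\delta)$ for $\alpha$ slightly bigger than $\delta$. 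This last point is exactly where the time-dependence of $D(A(t))$ enters and where Hypothesis \ref{as:isom} on the interpolation spaces $\tE_\delta$ is used; I expect the kernel bounds for $(P(t,s))$ into $\tE_\delta$ (from the Acquistapace--Terreni theory in Section \ref{sec:parabolic}) to be the technical engine here.

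For part (2), the extra regularity $u_0\in(E,D(A(0)))_{\eta,2}$ buys a gain of $\eta$ derivatives at $t=0$, which propagates. I would run the fixed-point argument instead in a weighted space encoding Hölder continuity, e.g.\ in $C^\lambda([0,T_0];\tE_\delta)$ with $\delta+\lambda<\eta$, and check that $L$ is again a contraction there. The three contributions are estimated separately: $t\mapsto P(t,0)u_0$ is in $C^\eta([0,T];E)\cap\ldots$ and in particular in $C^\lambda([0,T];\tE_\delta)$ by interpolation and the Acquistapace--Terreni estimates for the evolution family acting on real interpolation spaces; the deterministic convolution gains essentially one full derivative and is easily in the required Hölder class; the stochastic convolution is again treated by factorization, where one needs the Hölder-in-time mapping properties of the operator $R_\alpha$ (which is where a Kolmogorov-type / Sobolev embedding in the $p$ variable is used, exploiting that $p$ can be taken large). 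The main obstacle throughout is the same: obtaining the precise two-parameter kernel estimates for $(P(t,s))$ as an operator from $E$ into $\tE_\delta$, and their Hölder continuity in $t$, in the presence of non-constant domains $D(A(t))$ — once these are in hand (from Section \ref{sec:parabolic} and Hypothesis \ref{as:isom}), the contraction and factorization arguments are routine adaptations of the autonomous case. I would also need to verify measurability/adaptedness of all constructed processes, which is standard but must be stated.
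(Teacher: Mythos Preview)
Your overall strategy (fixed-point for the mild formulation, factorization for path regularity) matches the paper's, but there is one genuine gap and two places where your route diverges from the paper's.

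\textbf{The gap.} The hypothesis on $u_0$ is only that it is strongly $\F_0$-measurable; there is \emph{no} moment assumption. Your fixed-point space requires $\sup_t\E\|V(t)\|^p<\infty$ (or $V\in L^p(\O;C([0,T];E))$), so the map $L$ is only well-defined once $u_0\in L^p(\O;E)$. You never say how to pass from the $L^p$ case to general $u_0$. In the paper this is done by a truncation/localization argument: set $u_n=\one_{\{\|u_0\|\le n\}}u_0$, obtain solutions $U_n$ for these bounded initial data, prove a ``local uniqueness'' lemma (Lemma \ref{lem:localityexistencetype}) saying that on $\{u_0=v_0\}$ the corresponding solutions coincide, and then define $U$ as the pointwise limit of the $U_n$. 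Uniqueness for general $u_0$ is likewise obtained via stopping times $\tau_n=\inf\{t:\|U^i(t)\|\ge n\}$, not by a direct Gronwall argument, since the solutions need not have moments. Your plan as written does not cover this.

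\textbf{Differences in approach.} First, the paper avoids small-time contraction plus iteration; instead it equips the solution space $Z^r_a=C([0,T];L^r(\O;E))$ with an equivalent exponentially weighted norm $\sup_t e^{-pt}\|\cdot\|$ and shows $L$ is a $\tfrac12$-contraction on all of $[0,T]$ for $p$ large (Lemma \ref{lem:contrtype}). Your small-time scheme would also work here since the Lipschitz constants are uniform, so this is a stylistic rather than substantive difference. Second, for part (2) the paper does \emph{not} rerun the fixed point in $C^\lambda([0,T];\tE_\delta)$. It first produces the solution $U$ in $C([0,T];E)$, then feeds $F(\cdot,U)$ and $B(\cdot,U)$ into the regularity results for the deterministic and stochastic convolutions (Proposition \ref{prop:detconv} and Theorem \ref{thm:facttype2}) to obtain $U-P(\cdot,0)u_0\in C^\lambda([0,T];\tE_\delta)$; the initial-value term is handled separately by Lemma \ref{lem:initialcons} using $u_0\in E^0_\eta$. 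This bootstrap is cleaner than a second fixed-point, since a contraction estimate in the $C^\lambda(\tE_\delta)$-norm would require controlling the Hölder norm of the convolutions by the Hölder norm of the input, which is unnecessary: the convolutions gain regularity from the $L^p$-in-time norm of the input alone.
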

Here $(E,D(A(0))_{\eta,2}$ denotes real interpolation between $E$ and
$D(A(0))$. However, one may also take other interpolation spaces. One may think
of $\tE_{\delta}$ as time-independent version of $(E,D(A(t))_{\eta,2}$ (cf. \ref{as:isom} on page \pageref{as:isom}).

Actually in Section \ref{sec:abstracteq}, we will allow $F$ and $B$ which are defined on suitable interpolation
spaces and take values in certain extrapolation spaces. This enables
us to consider a larger class of noises. Moreover, in Section
\ref{sec:local} we even consider the case that $F$ and $B$ are
locally Lipschitz and Theorem \ref{thm:mainexistencegeninitialintro} has a version for locally
Lipschitz coefficients (see Theorem
\ref{thm:mainexistenceLocallyLipschitzCasetype}). It is also shown there that if
additionally $F$ and $B$ satisfy a linear growth condition as well, then the
full statements (1) and (2) of Theorem \ref{thm:mainexistencegeninitialintro}
still hold in the locally Lipschitz case.

\medskip

Our model equation is a problem which has been studied in \cite{SSV2,SSV}. Here
a second order order equation with time dependent boundary conditions is
considered. Sanz-Sol\'e and Vuillermot use a version of the factorization
methods to obtain existence, uniqueness and regularity results. Their methods
are based on estimates for Green's functions. They also consider two types of
variational solutions and mild solutions, and they show that these are all
equivalent. We obtain existence, uniqueness and regularity by applying the
above abstract framework. This leads to several improvements of
\cite{SSV2,SSV}. For example our space-time regularity results are better (see
Remark \ref{rem:compSSV}). We also show that our variational and mild solutions coincide with their solution concepts. Our setting seems more robust to adjustments of the equation (see Remark \ref{rem:othercases} and Example \ref{ex:loclLip}).

The stochastic partial differential equation is:
\begin{equation}\label{neumannintro}
\begin{aligned}
d u(t,s) &= A(t, s, D) u(t, s) + f(u(t,s)) \, dt \\ &\qquad \qquad
 \qquad + g(u(t,s)) \, d W(t,s), \ \ t\in (0,T], s\in S,
\\ C(t,s,D) u(t,s) &= 0, \ \ t\in (0,T], s\in \partial S
\\ u(0, s) &= u_0(s), \ \ s\in S.
\end{aligned}
\end{equation}
Here $S$ is a bounded domain with boundary of class $C^2$ and outer normal
vector $n(s)$ in $\R^n$, and
\begin{align*}
A(t,s,D) &= \sum_{i,j=1}^n D_i \Big( a_{ij}(t,s) D_j \Big) + a_0(t,s),
\\  C(t,s,D) &= \sum_{i,j=1}^n a_{ij}(t,s) n_i(s) D_j,
\end{align*}
where the coefficients $a_{ij}$ and $a_0$ are real valued and smooth and such
that $A(t,s,D)$ is uniformly elliptic (cf.\ Example \ref{ex:Lip1}). The
functions $f$ and $g$ are Lipschitz functions and $u_0$ is some
$\F_0$-measurable initial value. $W$ is a Brownian motion which is white with
respect to the time variable and colored with respect to the space variable.
More precisely in Example \ref{ex:Lip1} we will assume that the covariance
$Q\in \calL(L^2(S))$ of $W(1)$ satisfies $\sqrt{Q}\in
\calL(L^2(S),L^\infty(S))$.

In Example \ref{ex:Lip1} we will show the following consequence of Theorem
\ref{thm:mainexistencegeninitialintro}. For details we refer to Section
\ref{sec:SSV}.
\begin{enumerate}
\item Let $p\in [2,\infty)$. If $u_0\in L^p(S)$ a.s., then there exists a
unique mild and variational solution $u$ of \eqref{neumannintro} with paths in
$C([0,T];L^p(S))$ a.s. Moreover, $u\in L^2(0,T;W^{1,2}(S))$ a.s., where
$W^{1,2}(S)=H^1(S)$ is the Sobolev space.

\item If $u_0\in C^{1}(\overline{S})$ a.s., then the solution $u$ is in
$C^{\lambda}([0,T];C^{2\delta}(S))$ for all $\lambda,\delta>0$ such that
$\lambda+\delta<\frac12$. In particular, $u\in
C^{\beta_1,\beta_2}(\overline{S}\times [0,T])$  for all $\beta_1\in (0,1)$ and
$\beta_2\in (0,\frac12)$.
\end{enumerate}

The definition of a variational solution is given in Section
\ref{sec:abstracteq} (also see Remark \ref{rem:variationex}). The
definition of $C^{\beta_1,\beta_2}$ etc.\ can be found in Section
\ref{sec:local}.
In Example \ref{ex:Lip2} we will also obtain a version of the above result for the case $\sqrt{Q}\in \calL(L^2(S),L^q(S))$ for some
$q\in (1,\infty)$. In Example \ref{ex:loclLip} we show how to obtain a version
of the above result for locally Lipschitz coefficients $f$ and $b$.

\medskip

One can also study partial differential equations driven by multiplicative
{\em space-time white noise} using \eqref{eq:SEabintro}. For second order equations,
this is only possible for dimension one, and therefore not very illustrative
for our setting. In higher dimensions this seems to be possible if the order
of the operator is larger than the dimension. This has been considered in
\cite{NVW3} for the autonomous case (also see \cite{Brz2}). In the
non-autonomous setting the case of Dirichlet boundary conditions has been
studied in \cite[Chapter 8]{VThesis}. Some technical details have to be
overcome in order to treat the case of more general boundary conditions. Our results also have interesting consequences for stochastic partial differential equations with boundary noise. This is
work in progress \cite{VSchn}.

The papers is organized as follows. In Section \ref{sec:prel} we
discuss the preliminaries on evolution families, $H^\infty$-calculus and stochastic
integration theory. In Sections \ref{sec:detconv} and
\ref{sec:stochconv} we study space-time regularity of deterministic
and stochastic convolutions respectively. For this we extend the
factorization method for stochastic convolutions. We also prove a
maximal regularity result. The abstract stochastic evolution
equation will be given in Section \ref{sec:abstracteq}. Here we also
introduce variational and mild solutions. In Section
\ref{sec:Lipcoefinttype} we construct a unique mild solution of
\eqref{eq:SEabintro} by fixed-point methods under Lipschitz
conditions on the coefficients. The results are extended to the
locally Lipschitz case in Section \ref{sec:local}. Finally, in
Section \ref{sec:SSV} we consider the example \eqref{neumannintro}.

\section{Preliminaries\label{sec:prel}}

Below, we will use several interpolation methods (cf.\ \cite{Tr1} for
details). Let $(E_1,E_2)$ be an interpolation couple. For $\eta\in
(0,1)$ and $p\in [1, \infty]$, $(E_1,E_2)_{\eta,p}$ is the real
interpolation space between $E_1$ and $E_2$. Secondly,
$[E_1,E_2]_{\theta}$ is the complex interpolation between $E_1$ and
$E_2$.

We write $a \lesssim_K b$ to express that there exists a constant
$c$, only depending on $K$, such that $a\le c b.$ We write
$a\eqsim_K b$ to express that $a \lesssim_K b$ and $b \lesssim_K a$.
If there is no danger of confusion we just write $a \lesssim b$ for convenience.

\subsection{Parabolic evolution families}\label{sec:parabolic}
Let $(A(t),D(A(t)))_{t\in [0,T]}$ be a family of closed and densely defined
linear operators on a Banach space $E$. Consider the non-autonomous Cauchy
problem:
\begin{equation}\label{nCP}
\begin{aligned}
u'(t) &= A(t) u(t), \ \ t\in [s,T],
\\ u(s) &= x.
\end{aligned}
\end{equation}
We say that $u$ is a {\em classical solution} of \eqref{nCP} if $u\in C([s,T]; E) \cap
C^1((s,T]; E)$, $u(t)\in D(A(t))$ for all $t\in (s,T]$, $u(s) = x$, and
$\frac{d u}{dt}(t) = A(t) u(t)$ for all $t\in (s,T]$. We call $u$ a {\em strict
solution} of \eqref{nCP} if $u\in
C^1([s,T]; E)$, $u(t)\in D(A(t))$ for all $t\in [s,T]$, $u(s) = x$, and
$\frac{d u}{dt}(t) = A(t) u(t)$ for all $t\in [s,T]$.

A family of bounded operators $(P(t,s))_{0\leq s\leq t\leq T}$ on
$E$ is called a {\em strongly continuous evolution
family} if
\begin{enumerate}
\item $P(s,s) = I$  for all $s\in [0,T]$.
\item $P(t,s) = P(t,r) P(r,s)$ for all $0\leq s\leq r\leq t\leq T$.
\item The mapping $\{(\tau,\sigma)\in [0,T]^2: \sigma\leq \tau\}\ni (t,s) \to P(t,s)$ is strongly
continuous.
\end{enumerate}

We say that such a family $(P(t,s))_{0\leq s\leq t\leq T}$ {\em solves
\eqref{nCP} (on $(Y_s)_{s\in [0,T]}$)} if $(Y_s)_{s\in [0,T]}$ are dense
subspaces of $E$ such that for all $0\leq s\leq t\leq T$, we have $P(t,s)
Y_s\subset Y_t\subset D(A(t))$ and the function $t\mapsto P(t,s) x$ is a strict
solution of \eqref{nCP} for every $x \in Y_s$. In this case we say that
$(A(t))_{t\in [0,T]}$ generates the evolution family $(P(t,s))_{0\leq s\leq
t\leq T}$.

Well-posedness (i.e.\ existence, uniqueness, and continuous dependence on
initial values from $(Y_s)_{s\in [0,T]}$) of \eqref{nCP} is equivalent to the
existence and uniqueness of a strongly continuous evolution family that solves
\eqref{nCP} on $(Y_s)_{s\in [0,T]}$ (see \cite{Ni, Ni2} and the references
therein).
%
In the literature many sufficient conditions for this can be found, both in the
hyperbolic and parabolic setting (cf.\ \cite{AT2, Am, Lun87, Lun, Pa, Ta1,
Ta2,Ya} and the references therein). Below we will recall the parabolic setting
of \cite{AT2,Ya}.

If $E$ is a real Banach space everything below should be understood for the
complexification of the objects under consideration. First we recall some
results on sectorial operators.

Assume that for a closed operator $(A, D(A))$ there exist constants $M,w\geq
0$ and $\phi\in (\pi/2,\pi]$ such that $\Sigma(\phi,w)\subset \rho(A)$ and
\begin{equation}\label{eq:analytic}
\|R(\lambda, A)\|\leq \frac{M}{1+|\lambda-w|},  \ \  \lambda \in
\Sigma(\phi,w).
\end{equation}
Here $\Sigma(\phi, w) = \{w\}\cup \{\lambda\in \C\setminus \{w\}:
|\arg (\lambda -w)|\leq \phi\}$. We denote $A_w = A-w$.

It is well-known that by \eqref{eq:analytic}, $A$ generates an
analytic semigroup. In this case for $\delta>0$ one can define
$(-A_w)^{-\delta}\in \mathcal{B}(E)$ by
\[(-A_w)^{-\delta} = \frac{1}{2\pi i} \int_{\Gamma} (w-\lambda)^{-\delta} R(\lambda, A) \, d\lambda,\]
where the contour $\Gamma = \{\lambda: \arg(\lambda-w) = \pm \phi\}$ is
orientated counter clockwise (cf.\ \cite{Am, Lun, Pa, Ta1} for details).
Furthermore, recall that the operator $(w-A)^{\delta}$ is defined as the
inverse of $(w-A)^{-\delta}$. For all $\beta>\alpha$,
\begin{equation}\label{eq:embedd}
(E, D(A))_{\beta, \infty}\hookrightarrow (E, D(A))_{\alpha,
1}\hookrightarrow D((w-A)^{\alpha}) \hookrightarrow (E,
D(A))_{\alpha, \infty},
\end{equation}
where embedding constants only dependent on $\alpha, \beta$ and the
constants in \eqref{eq:analytic}.


As before, let $(A(t),D(A(t)))_{t\in [0,T]}$ be a family of closed and densely
defined operators on a Banach space $E$. We will briefly discuss the setting of
Acquistapace and Terreni \cite{AT2}. Note that most of the results below have
versions for non-densely defined $A(t)$ as well. In fact they study a slightly
more general setting.

Condition (AT) is said to be satisfied if the following two
conditions hold:
\begin{enumerate}
\item[(AT1)]  \label{AT1}
$A(t)$ are linear operators on a Banach space $ E $ and there are
constants $w\in \R$, $K \ge 0$, and $ \phi \in (\frac{\pi}{2},\pi)$
such that $\Sigma(\phi,w) \subset \varrho(A(t))$ and for all $
\lambda \in \Sigma(\phi,w)$ and $t\in [0,T]$,
\[ \| R(\lambda, A(t)) \| \le \frac{K}{1+ |\lambda-w|}.\]
\item[(AT2)]  \label{AT2}
There are constants $L \ge 0$ and $\mu, \nu \in (0,1]$ with $\mu +
\nu >1$ such that for all $\lambda \in \Sigma(\phi,0)$ and $s,t \in
[0,T]$,
\[ \| A_w(t)R(\lambda,
A_w(t))(A_w(t)^{-1}- A_w(s)^{-1})\| \le L |t-s|^\mu
(|\lambda|+1)^{-\nu}.  \]
\end{enumerate}
Below it will be convenient to denote $\kappa_{\mu,\nu} =
\mu+\nu-1\in (0,1]$.

These conditions have been extensively studied in the literature,
where also many examples can be found. The first condition may be
seen as analyticity uniformly in $t \in [0,T]$.


If (AT1) holds and the domains are constant $D(A(0))=D(A(t))$, $t\in [0,T]$,
then H\"older continuity of $(A(t))_{t\in [0,T]}$ in $\calL(D(A(0)),E)$ with
exponent $\eta$, implies (AT2) with $\mu = \eta$ and $\nu=1$ (see \cite[Section
7]{AT2}). The conditions in that case reduce to the conditions in the theory of
Sobolevski\u\i \, and Tanabe for constant domains (cf.\ \cite{Lun, Pa, Ta1}).

We will use the notation  $E_\eta^t = (E, D(A(t)))_{\eta, 2}$ for
$t\in [0,T]$ unless it is stated otherwise. Also see \ref{as:isom1a}
on page \ref{as:isom1a}. Further, we write that $E^t_{-\theta}$ for
the completion of $E$ with respect to the norm
$\|x\|_{E^t_{-\theta}} = \|(-A_w(t))^{-\theta}x\|$.

Under the assumptions (AT1) and (AT2) the following result holds
(see \cite[Theorems 6.1-6.4]{AT2} and \cite[Theorem 2.1]{Ya}).
\begin{theorem} \label{thm:exist-parab}
If condition (AT) holds, then there exists a unique strongly continuous
evolution family $(P(t,s))_{0\le s \le t \le T}$ that solves \eqref{nCP} on
$D(A(s))$ and for all $x\in E$, $P(t, s) x$ is a classical solution of
\eqref{nCP}. Moreover, $(P(t,s))_{0\le s \le t \le T}$ is continuous on $0 \le
s < t \le T$ and there exists a constant $C>0$ such that for every $0\leq
s<t\leq T$,
\begin{eqnarray}\label{eq:2_13}
\|P(t,s) x\|_{E_{\alpha}^t} & \le & C(t-s)^{ \beta-\alpha}\|x\|_{E^{s}_{\beta}}
\qquad \mbox{ for } 0 \leq\beta\leq\alpha \le 1,
\end{eqnarray}
\end{theorem}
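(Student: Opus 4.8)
The statement to prove is Theorem \ref{thm:exist-parab}, which asserts existence and uniqueness of the parabolic evolution family under conditions (AT1) and (AT2), together with the smoothing estimate \eqref{eq:2_13}. Since the excerpt explicitly cites \cite[Theorems 6.1--6.4]{AT2} and \cite[Theorem 2.1]{Ya}, the ``proof'' here will really be a pointer to the Acquistapace--Terreni construction, but let me sketch how that construction goes and which estimate is the crux.

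The plan is to construct $P(t,s)$ by the classical parametrix (successive approximation) method of Acquistapace and Terreni. First I would fix the analytic semigroups $e^{\tau A(t)}$ generated for each frozen time $t$ by (AT1), using the Dunford integral $e^{\tau A(t)} = \frac{1}{2\pi i}\int_\Gamma e^{\tau\lambda} R(\lambda,A(t))\,d\lambda$ over a suitable sector contour, and record the standard analytic-semigroup estimates $\|A_w(t)^k e^{\tau A_w(t)}\| \lesssim \tau^{-k}$ for $\tau \in (0,T]$, uniformly in $t$ by (AT1). The natural ansatz is $P(t,s) = e^{(t-s)A(s)} + \int_s^t e^{(t-r)A(r)} R(r,s)\,dr$, where the kernel $R$ solves the Volterra equation $R(t,s) = R_1(t,s) + \int_s^t R_1(t,r)R(r,s)\,dr$ with $R_1(t,s) = (A(t)-A(s))e^{(t-s)A(s)}$. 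The key point is that (AT2) is exactly the hypothesis that makes $R_1$ only mildly singular: one shows $\|R_1(t,s)\| \lesssim (t-s)^{\kappa_{\mu,\nu}-1}$, i.e.\ the singularity exponent $1-\kappa_{\mu,\nu} = 2-\mu-\nu < 1$ is integrable, so the Volterra series $\sum_n R_n$ with $R_{n+1}(t,s) = \int_s^t R_1(t,r)R_n(r,s)\,dr$ converges absolutely by the usual Beta-function bookkeeping, giving a well-defined kernel $R$ with $\|R(t,s)\| \lesssim (t-s)^{\kappa_{\mu,\nu}-1}$.

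Next I would verify the evolution-family axioms for the $P(t,s)$ so constructed: the normalization $P(s,s)=I$ is immediate, strong continuity on the closed triangle follows from dominated convergence in the defining integral together with strong continuity of the frozen semigroups, and the Chapman--Kolmogorov identity $P(t,s) = P(t,r)P(r,s)$ is proved by differentiating both sides in the middle variable (using that each builds a solution of \eqref{nCP}) and invoking uniqueness of the associated Cauchy problem; alternatively it falls out of the parametrix construction directly. That $t\mapsto P(t,s)x$ is a classical solution of \eqref{nCP} for every $x\in E$, and a strict solution for $x\in D(A(s))$, comes from differentiating the Dunford-integral representation and showing the correction term $\int_s^t e^{(t-r)A(r)}R(r,s)\,dr$ lands in $D(A(t))$ with the right derivative --- this is where the $\mu+\nu>1$ margin is used a second time, to control $A(t)\int_s^t e^{(t-r)A(r)}R(r,s)\,dr$. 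Uniqueness of the evolution family follows from uniqueness for \eqref{nCP}, cf.\ the well-posedness equivalence recalled above.

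Finally, for the smoothing estimate \eqref{eq:2_13}, the plan is to interpolate. For the endpoint $\alpha=\beta$ it is just uniform boundedness of $P(t,s)$ on $E$ (and on each $E_\beta^t$, via the construction). For $\beta=0$, $\alpha=1$ one needs $\|A_w(t)P(t,s)x\| \lesssim (t-s)^{-1}\|x\|$, which comes from $\|A_w(t)e^{(t-s)A(t)}\|\lesssim (t-s)^{-1}$ plus a commutator correction estimated again via (AT2) --- and after that, the intermediate cases $\|P(t,s)x\|_{E_\alpha^t}\lesssim (t-s)^{\beta-\alpha}\|x\|_{E_\beta^s}$ follow by real interpolation between the extreme exponents, using \eqref{eq:embedd} to pass between the fractional-power domains $D((w-A)^{\alpha})$ and the real-interpolation spaces $E_\alpha^t$. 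The main obstacle throughout is the single recurring estimate: controlling the difference operator $(A(t)-A(s))$ composed with the frozen resolvent/semigroup in a way that is uniform in the frozen parameter and integrable in $t-s$; once $\|R_1(t,s)\|\lesssim (t-s)^{\kappa_{\mu,\nu}-1}$ is established from (AT2), the remainder of the argument is bookkeeping with Beta integrals and interpolation. Since all of this is carried out in full in \cite{AT2} and \cite{Ya}, I would simply cite those references for the detailed proof.
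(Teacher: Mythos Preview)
Your proposal matches the paper's treatment: the paper gives no proof of Theorem~\ref{thm:exist-parab} at all, but simply cites \cite[Theorems~6.1--6.4]{AT2} and \cite[Theorem~2.1]{Ya}, exactly as you conclude.

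One technical remark on your sketch, since it is the only substantive content here: the parametrix you wrote down, with first kernel $R_1(t,s) = (A(t)-A(s))e^{(t-s)A(s)}$, is the Sobolevski\u\i--Tanabe ansatz for the \emph{constant-domain} case. In the genuine (AT) setting the domains $D(A(t))$ vary, so the expression $A(t)-A(s)$ is not even defined as an operator on a common domain, and that formula for $R_1$ does not make sense. The Acquistapace--Terreni construction instead freezes the semigroup at the \emph{final} time, taking $e^{(t-s)A(t)}$ as the zeroth approximation, and the relevant singular kernel is built from quantities such as $A_w(t)e^{(t-s)A_w(t)}\bigl(A_w(t)^{-1}-A_w(s)^{-1}\bigr)A_w(s)$, which is exactly where hypothesis (AT2) enters to give the $(t-s)^{\kappa_{\mu,\nu}-1}$ bound. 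The Volterra-series bookkeeping and the interpolation argument for \eqref{eq:2_13} you describe are then correct in spirit. Since you defer to \cite{AT2,Ya} for the details anyway, this does not affect your conclusion, but if you retain the sketch you should adjust the ansatz accordingly.
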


%


We recall from \cite[Theorem 2.1]{Ya} that there is a constant $C>0$
such that for all $\theta\in (0,\mu)$ and for all $x\in
D((w-A(s))^{\theta})$,
\begin{equation}
\label{eq:2_15} \| P(t,s)( w-A(s))^\theta x\|  \le  C (\mu-
\theta)^{-1}(t-s)^{- \theta} \|x\|.
\end{equation}

%

Consider the following Hypothesis.

\let\ALTERWERTA\theenumi
\let\ALTERWERTB\labelenumi
\def\theenumi{(H1)$'_{\eta_0}$}
\def\labelenumi{(H1)$'_{\eta_0}$}
\begin{enumerate}
\item \label{as:isom1a} There exists an $\eta_0\in (0,1]$ and an family of
spaces $(\tE_{\eta})_{\eta\in [0,\eta_0]}$ such that
\[\tE_{\eta_0} \hookrightarrow \tE_{\eta_1} \hookrightarrow \tE_{\eta_2} \hookrightarrow \tE_{0}=E, \ \ \ 0\leq \eta_2\leq \eta_1\leq \eta_0.\]
and for all
$\eta\in [0,\eta_0]$
\[E_{\eta}^t := (E, D(A(t)))_{\eta, 2} \hookrightarrow \tE_{\eta}\hookrightarrow E\]
with uniform constants in $t\in[0,T]$.
\end{enumerate}
\let\theenumi\ALTERWERTA
\let\labelenumi\ALTERWERTB
Alternatively, one could replace $(E, D(A(t)))_{\eta, 2}$ by $(E,
D(A(t)))_{\eta, p}$ for $p\in (2, \infty)$ or by the complex interpolation
spaces $[E, D(A(t))]_{\eta}$.

Assumption \ref{as:isom1a} enables us to deduce space time regularity results.
Such type conditions are often used to get rid of the time dependence. In
applications this gives a way to obtain H\"older regularity in space. A similar
condition can be found in \cite[Hypothesis (H2)]{MSchnnew}. Later on we will
strengthen \ref{as:isom1a} to a condition \ref{as:isom} (see page \pageref{as:isom}). There we also require that the space $\tE_{\eta}$ are UMD and of type $2$. This is the main reason one can only allow $p\in [2, \infty)$ if one considers $(E,
D(A(t)))_{\eta, p}$.

%
%

In many examples one can take $\tE_{\eta} = E_{\eta}^t$ for $\eta$
small. For second order operators on $L^p$-spaces, \ref{as:isom1a}
is usually fulfilled for $\eta_0 =\frac12$. However, since it can be
difficult to calculate $E_{\eta}^t$ it will be convenient to work in
the above setting.
In the next example we briefly motivate why it is useful to consider the spaces
$\tE_\eta$.

\begin{example}\label{ex:boundary}
Consider a second order elliptic differential operator $A(t)$ on a domain $S$
with time dependent boundary condition $C(t) u =0$. If this is modeled
on $E=L^p(S)$ for $p\in [2,\infty)$, then one usually has $D(A(t)) = \{f\in
W^{2,p}(S): C(t) f =0\}$. Often one shows that the solution $u$ takes its
values in $E_{\eta}^t= (E, D(A(t)))_{\eta, 2}$. However, it may be difficult
to characterize $E_{\eta}^t$ because of the boundary condition. It is even not
clear whether such a space is time independent. This will be needed below. It
is easier to calculate $\tE_{\eta} = (E, W^{2,p}(S))_{\eta, 2}$, which is
$B^{2\eta}_{p,2}(S)$ for regular $S$. This space is time independent and
regularity in the space $B^{2\eta}_{p,2}(S)$ usually suffices.

Recall from Grisvard's result (cf.\ \cite[Theorem 4.3.3]{Tr1}) that for domains
and coefficients which are $C^{\infty}$ one can characterize the spaces
$E_{\eta}^t$ as certain subspaces of $B^{2\eta}_{p,2}(S)$. A similar result for complex interpolation spaces follows from Seeley \cite{Se}. In Amann \cite[Section 7]{Amnonhom} it is explained that for second order elliptic operators on $L^p$-spaces the boundary conditions in $D(A(t)))_{\eta, p}$ for $p\in (2, \infty)$ or $[E, D(A(t))]_{\eta}$ disappear under the natural conditions on $p$ and $\eta$. Although his assumptions on the domain seems to be that it is $C^\infty$, but it follows from \cite[Remark 7.3]{Amnonhom} and \cite[Theorem 2.3]{DDHPV} that a $C^2$ boundary suffices.
\end{example}

\begin{lemma}\label{lem:initialcons}
Assume (AT) and \ref{as:isom1a} for some $\eta_0\in (0,1]$.

Let $\alpha\in (0,\eta_0]$. Let $\delta,\lambda>0$ be such that
$\delta+\lambda\leq \alpha$. Then there exists a constant $C$ such that for
all $0\leq r\leq s\leq t\leq T$ and for all $x\in E_{\alpha}^r$
\begin{equation}\label{eq:holderx}
\begin{aligned} \|P(t,r)x - P(s,r)x\|_{\tE_{\delta}} &\leq C|t-s|^{\lambda}
\|x\|_{E_{\alpha}^r}.
\end{aligned}
\end{equation}

Moreover, if $\alpha\in [0,\eta_0)$ and $x\in E_{\alpha}^r$, then $t\mapsto
P(t,r)x\in C([r,T];\tE_\alpha)$.

\end{lemma}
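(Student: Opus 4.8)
The plan is to prove the Hölder estimate \eqref{eq:holderx} by writing the increment $P(t,r)x - P(s,r)x$ using the evolution property and exploiting the smoothing bound \eqref{eq:2_13} together with \ref{as:isom1a}. First I would use the factorization $P(t,r)x - P(s,r)x = (P(t,s) - I)P(s,r)x$. By \eqref{eq:2_13}, $P(s,r)x$ lies in $E^s_\alpha$ with norm controlled by $\|x\|_{E^r_\alpha}$, so it suffices to estimate $\|(P(t,s)-I)y\|_{\tE_\delta}$ for $y\in E^s_\alpha$. The standard trick is to split $\alpha = \delta + \lambda + (\alpha - \delta - \lambda)$ and interpolate: one can write $(P(t,s)-I)y = \int_s^t \tfrac{d}{d\sigma}P(\sigma,s)y\,d\sigma$ when $y$ is in $D(A(s))$, but to avoid domain issues I would instead use the moment-type inequality coming from \eqref{eq:2_13}. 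Concretely, for $y \in E^s_\alpha$,
\[
\|(P(t,s)-I)y\|_{\tE_\delta} \lesssim \|(P(t,s)-I)y\|_E^{1-\delta/\alpha}\,\|(P(t,s)-I)y\|_{E^s_\alpha}^{\delta/\alpha}
\]
by the reiteration/interpolation inequality $\|z\|_{\tE_\delta}\lesssim \|z\|_E^{1-\delta/\alpha}\|z\|_{E^s_\alpha}^{\delta/\alpha}$, valid since $\tE_\delta$ sits between $E$ and $E^s_\alpha$ up to the embedding in \ref{as:isom1a} (with uniform constants in $s$). The second factor is bounded by $\|y\|_{E^s_\alpha}$ up to the uniform operator bound on $P(t,s)-I$, and the first factor is where the Hölder gain comes from.

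For the first factor I would show $\|(P(t,s)-I)y\|_E \lesssim |t-s|^{\alpha}\|y\|_{E^s_\alpha}$. This is a well-known consequence of analyticity: for $y\in E^s_\alpha \hookrightarrow D((w-A(s))^\alpha)$ (using \eqref{eq:embedd}), one writes $(P(t,s)-I)y$ and uses \eqref{eq:2_15} applied with a fractional power, or more simply uses that $\|P(\sigma,s)z - z\|_E \lesssim (\sigma-s)^\theta\|z\|_{D((w-A(s))^\theta)}$ for $\theta\in(0,\mu)$, which follows from the parabolic estimates in Theorem \ref{thm:exist-parab} and \eqref{eq:2_15}. Combining, $\|(P(t,s)-I)y\|_{\tE_\delta}\lesssim |t-s|^{\alpha(1-\delta/\alpha)}\|y\|_{E^s_\alpha} = |t-s|^{\alpha-\delta}\|y\|_{E^s_\alpha}$, and since $\lambda \le \alpha - \delta$ and $t-s\le T$ we may replace $\alpha-\delta$ by $\lambda$ at the cost of a $T$-dependent constant. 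Chaining with $\|P(s,r)x\|_{E^s_\alpha}\lesssim \|x\|_{E^r_\alpha}$ (taking $\beta=\alpha$ in \eqref{eq:2_13}) gives \eqref{eq:holderx}. I expect the main obstacle to be justifying the elementary estimate $\|(P(t,s)-I)y\|_E \lesssim |t-s|^{\alpha}\|y\|_{E^s_\alpha}$ cleanly when $\alpha$ can be as large as $\eta_0 \le 1$ while \eqref{eq:2_15} only allows exponents below $\mu$ (which may be smaller than $\alpha$); one circumvents this by interpolating — first prove the bound for $\theta<\min(\mu,\alpha)$ small and then use that $E^s_\alpha\hookrightarrow E^s_\theta$, giving $|t-s|^\theta$, and note that for the final statement we only ever need an exponent $\lambda<\alpha\le\eta_0$, so choosing $\theta$ slightly above $\lambda$ (possible if $\lambda<\mu$; if not, iterate the evolution property on a fine partition to boost the exponent) suffices. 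Actually the cleanest route avoiding any restriction is: since $\delta+\lambda\le\alpha$, pick $\theta$ with $\delta<\theta<\delta+\lambda\le\alpha$ wait — better: interpolate $\tE_\delta$ between $\tE_{\delta'}$ and $E$ for $\delta<\delta'<\alpha$ and use the already-known continuity statement; I would organize the write-up so that the exponent actually used is always strictly less than $\eta_0$, sidestepping the endpoint entirely.

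For the final assertion, that $t\mapsto P(t,r)x \in C([r,T];\tE_\alpha)$ when $\alpha\in[0,\eta_0)$ and $x\in E^r_\alpha$: right-continuity and continuity on $(r,T]$ for fixed $x$ in the relevant $\tE_\alpha$-topology follows from strong continuity of the evolution family combined with the uniform bound $\sup_{r\le s\le t}\|P(t,s)\|_{\calL(E^s_\alpha,\tE_\alpha)}<\infty$ (again from \eqref{eq:2_13} and \ref{as:isom1a}) and a density argument: approximate $x$ in $E^r_\alpha$ by elements $x_n$ of the slightly smaller space $E^r_{\alpha'}$ with $\alpha<\alpha'<\eta_0$ — on these, \eqref{eq:holderx} with some $\delta=\alpha$ gives genuine Hölder continuity into $\tE_\alpha$, hence continuity — and then pass to the limit uniformly in $t$ using the uniform operator bound. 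Continuity at the left endpoint $t=r$ requires $P(r,r)x=x$ to be approached in $\tE_\alpha$; here one uses that $x\in E^r_\alpha$ and $\|P(t,r)x-x\|_{\tE_\alpha}\to 0$ as $t\downarrow r$, which is exactly the $\alpha<\eta_0$ case of the computation above (with the target exponent strictly below the source). This is why the strict inequality $\alpha<\eta_0$ is needed: at $\alpha=\eta_0$ one would only get boundedness, not continuity, into $\tE_{\eta_0}$.
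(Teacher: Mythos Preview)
Your approach is essentially the same as the paper's: factor via the evolution property, get a uniform bound in the top space and a H\"older bound in $E$, then interpolate; for the continuity assertion, approximate by smoother data and pass to the limit using uniform operator bounds. The paper's write-up is more economical in two respects.

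First, the estimate you struggle with, $\|P(t,r)x - P(s,r)x\|_E \lesssim |t-s|^{\alpha}\|x\|_{E^r_\alpha}$, holds for all $\alpha\in(0,1]$ \emph{without} any restriction coming from $\mu$; the paper simply cites it as \cite[(2.16)]{Schn}. Your worry that \eqref{eq:2_15} only gives exponents below $\mu$ is misplaced here --- that bound concerns $P(t,s)(w-A(s))^\theta$, which is a different object. The elementary Hölder-in-$E$ estimate for evolution families under (AT) is standard and does not require iterating on a partition or any of your other proposed workarounds. Once you accept this, your argument collapses to exactly the paper's.

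Second, rather than interpolating $\tE_\delta$ between $E$ and $E^s_\alpha$ as you do, the paper interpolates between $\tE_0=E$ and $\tE_\alpha$: it bounds $\|P(t,r)x-P(s,r)x\|_{\tE_\alpha}$ trivially by the triangle inequality plus \eqref{eq:2_13} (no H\"older gain at the top), combines with the $E$-norm H\"older bound above, and reads off the $\tE_\delta$ estimate with $\delta=\theta\alpha$, $\lambda=(1-\theta)\alpha$. For the continuity part, the paper approximates by $x_n\in E^r_{\eta_0}$ rather than $E^r_{\alpha'}$ for some $\alpha'\in(\alpha,\eta_0)$, but this is cosmetic.
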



\begin{remark}
Under additional assumption on $\alpha,\delta,\mu,\nu$, there is a
version of Lemma \ref{lem:initialcons} for the case that
$\alpha>\eta_0$. Since we will not need this in our examples, we
will not consider this situation.
\end{remark}

\begin{proof}
%

It follows from \eqref{eq:2_13} that
\begin{align*}
\|P(t,r)x - P(s,r)x\|_{\tE_{\alpha}}&\leq \|P(t,r)x\|_{\tE_{\alpha}} +
\|P(s,r)x\|_{\tE_{\alpha}} \\ & \lesssim \|P(t,r)x\|_{E_{\alpha}^t} +
\|P(s,r)x\|_{E_{\alpha}^s} \lesssim \|x\|_{E_{\alpha}^r}.
\end{align*}
Moreover, by \cite[(2.16)]{Schn} we obtain that
\[
\|P(t,r)x - P(s,r)x\| \leq C|t-s|^{\alpha} \|x\|_{E_{\alpha}^r}.
\]
Therefore, by interpolation with $\delta = \theta\alpha$ and
$\lambda = (1-\theta)\alpha$ for $\theta\in [0,1]$ we obtain
\[\|P(t,r)x - P(s,r)x\|_{\tE_{\delta}}\lesssim |t-s|^{\lambda} \|x\|_{E_{\alpha}^r}.\]
This proves the first part.

For the second part let $x\in E_{\alpha}^r$, and take $x_1, x_2 \ldots$ in
$E_{\eta_0}^r$ such that $x= \limn x_n$ in $E_{\alpha}^r$. Then the first
result shows that each $t\mapsto P(t,r)x_n$ in $\tE_{\alpha}$ is continuous.
Moreover, as before
\[\|P(t,r)x - P(t,r)x_n\|_{\tE_\alpha} = \|P(t,r)(x-x_n)\|_{\tE_\alpha}\lesssim \|x-x_n\|_{E_{\alpha}^r}.\]
Therefore, $P(t,r)x = \limn P(t,r)x_n$ in $\tE_{\alpha}$ uniformly in $t\in
[0,T]$, and it is continuous.
\end{proof}

\subsection{$H^\infty$-calculus on Hilbert spaces\label{subsec:Hinfty}}

In Section \ref{sec:stochconv} we use McIntosh's $H^\infty$-calculus in order to derive maximal regularity of stochastic convolutions on Hilbert spaces $E$. Here we briefly recall the definition and a characterization which gives the way in which we will use the $H^\infty$-calculus. For details we refer to \cite{ADM,Haase:2,KuWe,McI} and references therein. Although we only explain $H^\infty$-calculus on Hilbert spaces, there are extensions to Banach spaces. Our situation slightly differs from the existing literature in the sense that $A$ is replaced by $-A$ and we assume $0\in \rho(A)$. Moreover, we only consider analytic semigroup generators below.

Let $E$ be a Hilbert space and let $A$ be a closed, densely defined operator on $E$. Assume $A$ is sectorial of type $\phi\in (\pi/2, \pi]$, i.e.\ the sector $\Sigma_{\phi}:=\Sigma(\phi,0)\subset \rho(A)$
and there exist a constant $M$ such that for all
\[\|R(\lambda, A)\|\leq \frac{M}{1+|\lambda|},  \ \  \lambda \in
\Sigma_\phi.\]
The largest constant $\phi$ for which such an $M$ exists will be denoted with $\phi(A)$.

For $\sigma\in (0,\pi]$, let $H^\infty(\Sigma_\sigma)$ denote the
space of bounded analytic functions $f:\Sigma_\sigma\to \C$ with norm
$\|f\|_{H^\infty(\Sigma_\sigma)} = \sup_{\lambda\in \Sigma_\sigma}
|f(\lambda)|$. Let
\[
H^\infty_0(\Sigma_\sigma) = \Big\{f\in H^\infty(\Sigma_\sigma): \exists \epsilon>0 \ \text{s.t.} \
|f(\lambda)|\leq \frac{|z|^\e}{(1+|z|^2)^\e}\Big\}.
\]

Let $A$ be as above, and fix $\sigma\in (\pi/2,\phi(A))$ and $\theta\in (\sigma, \phi(A))$.
For $f\in H^\infty_0(\Sigma_\sigma)$, one can define
\[f(A) = \frac{1}{2\pi i}\int_{\partial \Sigma_{\theta}} f(\lambda) R(\lambda,A) \, d\lambda, \]
where the integral converges in the Bochner sense. We say that $A$
has a {\em bounded $H^\infty(\Sigma_\sigma)$-calculus for
$\sigma\in (\pi/2,\phi(A))$} if there is a constant $C$ such that
\begin{equation}\label{eq:Hinfty}
\|f(A)\|\leq C \|f\|_{H^\infty(\Sigma_{\sigma})} \ \ \text{for all $f\in
H^\infty_0(\Sigma_\sigma)$}.
\end{equation}
In this case \eqref{eq:Hinfty} has a unique continuous extension to
all $f\in H^\infty(\Sigma_{\sigma})$. The boundedness of the $H^\infty$-calculus is characterized by the following theorem.

\begin{proposition}
Let $E$ be a Hilbert space and let $A$ be as above. Then the following assertions are equivalent:
\begin{enumerate}
\item $A$ has a bounded $H^\infty(\Sigma_{\sigma})$-calculus for some (all) $\sigma\in (\pi/2, \phi(A))$.

\item $-A$ has bounded imaginary powers and for some (all) $\sigma\in (\pi/2, \phi(A))$ there is a constant $C>0$ such that for all $s\in \R$, $\|(-A)^{is}\|\leq C e^{\sigma |s|}$.

\item For some (all) $|\sigma|\in (-\phi(A), \phi(A))$ there exists a constant $C>0$ such that for all $x\in E$,
\begin{equation}\label{eq:squarefunctionResolvent}
C^{-1}\|x\|\leq \Big(\int_0^\infty \|(-A)^{\frac12} R(t e^{i\sigma},A)x\|^2 \, dt\Big)^{\frac12} \leq C\|x\|.
\end{equation}

\end{enumerate}
\end{proposition}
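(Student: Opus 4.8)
The plan is to prove the chain of implications $(1)\Rightarrow(2)\Rightarrow(3)\Rightarrow(1)$, which is essentially the classical McIntosh theory adapted to the present normalization (operator $-A$, with $0\in\rho(A)$ and $A$ a generator of an analytic semigroup). First I would note that because $0\in\rho(A)$ and the sectoriality estimate $\|R(\lambda,A)\|\leq M/(1+|\lambda|)$ holds, the fractional powers $(-A)^z$ are well-defined bounded operators for $\Re z\leq 0$ and closed invertible operators for $\Re z>0$, and the imaginary powers $(-A)^{is}$ form a $C_0$-group whose growth bound is what is at stake in $(2)$. For $(1)\Rightarrow(2)$: the functions $\lambda\mapsto (-\lambda)^{is}$ are not in $H^\infty_0$, but they are bounded on $\Sigma_\sigma$ with $\|(-\cdot)^{is}\|_{H^\infty(\Sigma_\sigma)}\leq e^{\sigma|s|}$; approximating by $H^\infty_0$-functions $f_\e(\lambda)=(-\lambda)^{is}\bigl(\tfrac{-\lambda}{(1-\lambda)^2}\bigr)^{\e}$ and passing to the limit using the bounded extension of the calculus, one gets $(-A)^{is}=\lim_\e f_\e(A)$ with $\|(-A)^{is}\|\leq C e^{\sigma|s|}$. (That the limit really is the imaginary power $(-A)^{is}$ and not some unrelated operator is the one routine-but-necessary point; it follows from the multiplicativity of the calculus and density of $D((-A))\cap \mathrm{ran}(-A)$.)

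For $(2)\Rightarrow(3)$, the cleanest route is to invoke the fact that on a Hilbert space the square function estimate $\bigl(\int_0^\infty\|(-A)^{1/2}R(te^{i\sigma},A)x\|^2\,dt\bigr)^{1/2}\eqsim\|x\|$ is equivalent to the boundedness of the $H^\infty$-calculus (this is McIntosh's theorem in its square-function form, and one direction of it — that boundedness of the calculus gives the two-sided square function bound — is again a direct computation once $(1)$ is available). So really I would prove $(2)\Rightarrow(1)$ and $(1)\Leftrightarrow(3)$. The step $(2)\Rightarrow(1)$ is the substantive one: on a Hilbert space, the generator of a bounded $C_0$-group of the form $(-A)^{is}$ with the stated exponential growth $\|(-A)^{is}\|\leq Ce^{\sigma|s|}$ has, by the Fourier/Mellin-multiplier argument, a bounded $H^\infty(\Sigma_{\sigma'})$-calculus for every $\sigma'>\sigma$ — one writes $f(A)=\frac{1}{2\pi}\int_\R \widehat{f}_{\rm Mellin}(s)\,(-A)^{is}\,ds$ and estimates the $H^\infty$-norm of $f$ against the weighted $L^1$-norm of its Mellin transform, using that multiplication operators on $L^2(\R)$ are isometric. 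The Hilbert space structure enters precisely here, through Plancherel.

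For $(1)\Leftrightarrow(3)$: given $(1)$, apply the bounded calculus to the family $\psi_t(\lambda)=(-\lambda)^{1/2}(te^{i\sigma}-\lambda)^{-1}$, observe $\int_0^\infty|\psi_t(\lambda)|^2\,\tfrac{dt}{?}$ — more precisely use the standard Hilbert-space identity $\int_0^\infty \langle\psi_t(A)x,\psi_t(A)y\rangle\,dt\eqsim\langle x,y\rangle$ valid for any non-degenerate $\psi\in H^\infty_0$, which is itself a consequence of Plancherel plus the resolution of identity $\int_0^\infty \psi(tA)\overline{\psi}(tA)\,\tfrac{dt}{t}=cI$; this yields the two-sided estimate in $(3)$. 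Conversely, $(3)$ for a single $\sigma$ gives, by a polarization and a second square-function estimate with a different auxiliary function, control of $\langle f(A)x,y\rangle$ by $\|f\|_{H^\infty}\|x\|\|y\|$, hence $(1)$. I expect the main obstacle to be the Mellin-multiplier computation in $(2)\Rightarrow(1)$: one must carry the contour of the $H^\infty_0$-functional calculus integral onto the group orbit carefully, justify the interchange of integrals, and track the aperture loss from $\sigma$ to an arbitrary $\sigma'\in(\sigma,\phi(A))$, since the group bound only controls sectors of half-angle $>\sigma$ — and one needs $\sigma$ itself to be attainable, which is why the statement reads "for some (all)". All of this is standard (see \cite{ADM,McI,KuWe}), so in the paper I would state the equivalences, give the Mellin-transform identity and the square-function identity as the two engines, and refer to those sources for the bookkeeping.
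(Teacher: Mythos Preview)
Your proof plan is substantively correct and follows the classical McIntosh approach, but you should know that the paper gives no proof at all: immediately after stating the proposition it simply writes ``This result can be found in \cite{McI} (also see \cite[Theorem 11.9]{KuWe}).'' The proposition is quoted as background, not established, so there is nothing in the paper to compare your argument against beyond the citation itself.

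Your closing remark --- that in the paper you would ``state the equivalences, give the Mellin-transform identity and the square-function identity as the two engines, and refer to those sources for the bookkeeping'' --- is already more than what the paper does; the author refers to the sources for everything, not just the bookkeeping. So your plan is fine as an outline of the underlying mathematics, and indeed the ingredients you name (the approximation $f_\e(\lambda)=(-\lambda)^{is}\bigl(\tfrac{-\lambda}{(1-\lambda)^2}\bigr)^{\e}$ for $(1)\Rightarrow(2)$, the Mellin--Plancherel argument for $(2)\Rightarrow(1)$, and the McIntosh square-function identity for $(1)\Leftrightarrow(3)$) are exactly what one finds in \cite{McI,ADM,KuWe}. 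But for the purposes of this paper the correct ``proof'' is a one-line citation.
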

This result can be found in \cite{McI} (also see \cite[Theorem 11.9]{KuWe}). The estimate \eqref{eq:squarefunctionResolvent} is called a square function estimate.
Applying the Fourier transform to $t\mapsto R(t e^{i\sigma},A)x$ with $\sigma = \pi/2$ one obtains that there exists a constant $C_2$ such that for all $x\in E$,
C\begin{equation}\label{eq:squarefunction}
C_2^{-1}\|x\|\leq \Big(\int_0^\infty \|(-A)^{\frac12} e^{tA}x\|^2 \, dt\Big)^{\frac12} \leq C_2\|x\|.
C\end{equation}
The important estimate for us will be
\begin{equation}\label{eq:squarefuncest0}
\Big(\int_0^\infty \|(-A)^{\frac12} e^{tA}\|^2 \, dt\Big)^{\frac12} \leq C_2\|x\|.
\end{equation}
The same estimates as in \eqref{eq:squarefuncest0} hold for $A^*$.
Moreover, if \eqref{eq:squarefuncest0} holds
for $A$ and $A^*$, this again implies the boundedness of the $H^\infty$-calculus.
We further note that the estimate \eqref{eq:squarefuncest0} is also used for the
Weiss conjecture in control theory (cf.\ \cite{LeM} and references therein).

Not every sectorial operator $A$ has a bounded $H^\infty$-calculus
of some angle. Counterexamples are given in \cite{McY}.
However, many examples are known to have a bounded $H^\infty$-calculus.
We state some sufficient conditions for the boundedness of the $H^\infty$-calculus.
\begin{remark}\label{rem:Hinfty0}
For a Hilbert space $E$, each of the following conditions is sufficient for having a bounded $H^\infty$-calculus
\begin{enumerate}
\item $A$ generates an analytic contraction semigroup (see \cite[Theorem 11.13]{KuWe}).

\item $-A$ is positive and self-adjoint. In this case one has $C_2=1$ in \eqref{eq:squarefunction} (cf.\ \cite[Example 11.7]{KuWe}) and $C=1$ in \eqref{eq:Hinfty} (cf.\ \cite[Section (G)]{ADM}).
\end{enumerate}
\end{remark}


\subsection{$\gamma$-Radonifying operators and stochastic integration\label{sec:stochint}}
We recall some results on $\gamma$-radonifying operators and stochastic
integration. For details on the subject we refer to
\cite{Bog,Brz2,DJT,KaWe,NW1,NVW1}.

Let $E$ be a Banach space and $H$ be a separable Hilbert space. Let
$(S,\mu)$ be a measurable space. A function $\phi:S\to E$ is called
{\em strongly measurable} if it is the pointwise limit of a sequence
of simple functions.

Let $E_1$ and $E_2$ be Banach spaces. An operator-valued function $\Phi:S\to
\calL(E_1,E_2)$ will be called {\em $E_1$-strongly measurable} if for all $x\in
E_1$, the $E_2$-valued function $\Phi x$ is strongly measurable.

If $(S, \Sigma, \mu)$ is a measure space and $\phi:S\to E$ is defined as an
equivalence class of functions, then we say that $\phi$ is {\em strongly
measurable} if there is a version of $\phi$ which is strongly measurable.

A bounded operator $R\in\calL(H,E)$ is said to be a $\gamma$-radonifying
operator if there exists an orthonormal basis $(h_n)_{n\ge 1}$ of $H$ such
that $\sum_{n\geq 1} \g_n \, Rh_n$ converges in $L^2(\O;E)$. We then define
$$ \n R\n_{\g(H,E)}
:= \Bigl(\E\Bigl\n  \sum_{n\geq 1} \g_n \,Rh_n \Bigr\n^2\Bigr)^\frac12.$$ This
number does not depend on the sequence $(\g_n)_{n\ge 1}$ and the basis
$(h_n)_{n\ge 1}$, and defines a norm on the space $\gamma(H, E)$ of all
$\gamma$-radonifying operators from $H$ into $E$. Endowed with this norm,
$\g(H,E)$ is a Banach space, which is separable if $E$ is separable. Moreover,
$\n R\n\le \n R\n_{\g(H,E)}$.

If $E$ is a Hilbert space, then $\g(H,E) = \mathcal{C}^2(H,E)$ isometrically,
where $\mathcal{C}^2(H,E)$ is the space of Hilbert-Schmidt operators. Also for
$E=L^p$ there are easy characterization of $\gamma$-radonifying operators.
Such a characterization has been obtained in \cite{BrzvN03}. We use a slightly
different formulation from \cite{NVW3}.

\begin{lemma}\label{lem:sq-fc-Lp}
Let $(S,\Sigma,\mu)$ be a $\sigma$-finite measure space and let $1\le
p<\infty$. For an  operator $R\in\calL(H,L^p(S))$ the following assertions are
equivalent:
\begin{enumerate}
\item  $R\in \g(H,L^p(S))$.

\item  There exists a function $g\in L^p(S)$ such that for all $h\in H$ we have
$|R h|\le \|h\|_H\cdot g$ $\mu$-almost everywhere.
\end{enumerate}
Moreover, in this situation we have
\begin{equation}\label{eq:Rsquarekappa}
\|R\|_{\g(H,L^p(S))} \lesssim_p \|g\|_{L^p(S)}.
\end{equation}
\end{lemma}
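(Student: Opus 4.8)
The plan is to prove the two implications separately, with the bulk of the work going into (2)$\Rightarrow$(1) together with the norm estimate \eqref{eq:Rsquarekappa}, and to keep close to the known argument of \cite{BrzvN03, NVW3}. First I would fix an orthonormal basis $(h_n)_{n\ge 1}$ of $H$ and recall the Gaussian-sum characterization: $R\in\g(H,L^p(S))$ with $\|R\|_{\g(H,L^p(S))}\eqsim_p \|R\|$ iff the partial sums $\sum_{n=1}^N \g_n Rh_n$ form a Cauchy sequence in $L^2(\O;L^p(S))$, and then use Fubini to swap the $L^p(S)$ and $L^2(\O)$ norms. Concretely, by the Kahane--Khintchine inequalities the $L^2(\O;L^p(S))$ norm of $\sum_n \g_n Rh_n$ is comparable (with constant depending only on $p$) to the $L^p(S;L^2(\O))$ norm, and since $\big(\sum_n \g_n (Rh_n)(s)\big)$ is, for fixed $s$, a Gaussian random variable with variance $\sum_n |(Rh_n)(s)|^2$, this $L^p(S;L^2(\O))$ norm equals a constant times $\big\|\big(\sum_n |Rh_n|^2\big)^{1/2}\big\|_{L^p(S)}$. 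Thus, modulo $p$-dependent constants,
\[
\|R\|_{\g(H,L^p(S))} \eqsim_p \Big\| \Big(\sum_{n\ge 1} |Rh_n|^2\Big)^{\frac12}\Big\|_{L^p(S)},
\]
and the whole lemma reduces to showing that the square function $\big(\sum_n |Rh_n|^2\big)^{1/2}$ lies in $L^p(S)$ if and only if there is a dominating $g\in L^p(S)$ as in (2), with comparable norms.

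For (2)$\Rightarrow$(1): if $|Rh|\le \|h\|_H\, g$ $\mu$-a.e.\ for each $h$, I would show the square function is bounded by $g$ pointwise a.e. The subtlety is that the exceptional null set depends on $h$, so one cannot directly plug in $h=h_n$ for all $n$ and all $s$ simultaneously — but that is easily handled: for each fixed $N$ and each unit vector $h$ in the (separable) span of $h_1,\dots,h_N$, and by a density/continuity argument over a countable dense set of such $h$, one gets $\big(\sum_{n=1}^N |(Rh_n)(s)|^2\big)^{1/2} = \sup_{\|h\|=1, h\in\mathrm{span}} |(Rh)(s)| \le g(s)$ off a single null set. Letting $N\to\infty$ by monotone convergence gives $\big(\sum_n|Rh_n|^2\big)^{1/2}\le g$ a.e., hence it is in $L^p(S)$ with norm $\le\|g\|_{L^p(S)}$; combined with the displayed equivalence this yields $R\in\g(H,L^p(S))$ and \eqref{eq:Rsquarekappa}.

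For (1)$\Rightarrow$(2): assuming $R\in\g(H,L^p(S))$, the displayed equivalence shows $g:=\big(\sum_n |Rh_n|^2\big)^{1/2}\in L^p(S)$. Then for an arbitrary $h\in H$ with coordinates $h=\sum_n c_n h_n$, one has $(Rh)(s) = \sum_n c_n (Rh_n)(s)$ for a.e.\ $s$ (first for finite sums, then passing to the limit using that $\sum_n c_n Rh_n$ converges in $L^p(S)$, hence a.e.\ along a subsequence), and Cauchy--Schwarz gives $|(Rh)(s)|\le \big(\sum_n |c_n|^2\big)^{1/2}\big(\sum_n |(Rh_n)(s)|^2\big)^{1/2} = \|h\|_H\, g(s)$ a.e. The main obstacle throughout is purely measure-theoretic bookkeeping — making sure the various a.e.\ statements and null sets are chosen uniformly in $h$ and $n$ before taking suprema or limits; once the Fubini/Kahane--Khintchine identification of the $\gamma$-norm with the $L^p$-norm of the square function is in place, everything else is routine. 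Alternatively, one could simply cite \cite{BrzvN03} for the equivalence and only record the constant in \eqref{eq:Rsquarekappa}, but I would include the short self-contained argument above for completeness.
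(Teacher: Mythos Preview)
Your argument is correct and is precisely the standard proof from \cite{BrzvN03} and \cite{NVW3}; note that the paper does not actually supply a proof of this lemma but only states it with those references, so there is nothing to compare against beyond the cited sources. Your Kahane--Khintchine/Fubini identification of $\|R\|_{\g(H,L^p(S))}$ with the $L^p$-norm of the square function, followed by the pointwise Cauchy--Schwarz domination in both directions (with the countable-density bookkeeping for the null sets), is exactly how the result is established in those references.
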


Let $(r_n)_{n\geq 1}$ be a Rademacher sequence on $(\O,\F,\P)$. Recall that a Banach space $E$ is said to have {\em type $2$} if there exists
a constant $C\ge 0$ such that for all finite subsets $\{x_1,\dots,x_N\}$ of
$E$ we have
\begin{equation}\label{eq:type2est}
\Big(\E \Big\n \sum_{n=1}^N r_n x_n\Big\n^2\Big)^\frac12 \le C
\Big(\sum_{n=1}^N \n x_n\n^2\Big)^\frac12.
\end{equation}
Hilbert spaces have type $2$ and the $L^p$-spaces for $p\in[2,\infty)$ have
type $2$ (see \cite[Chapter 11]{DJT} for details). Furthermore, Besov spaces
$B^{\alpha}_{p,q}$ and Sobolev spaces $W^{\alpha,p}$ have type $2$ as long as
$2\leq p,q<\infty$. This follows from the fact that these spaces are
isomorphic to closed subspaces of $L^p$-spaces and $\ell^q(L^p)$-spaces
(cf.\ \cite{Tr1}).

We will also need UMD Banach spaces. The definition of a UMD space will be
omitted. We refer to \cite{Bu3} for an overview on the subject. Important
examples of UMD spaces are the reflexive scale of $L^p$, Sobolev and Besov
spaces. Moreover, we note that every UMD space is reflexive.

A detailed stochastic integration theory for operator-valued processes
$\Phi:[0,T]\times\O\to \calL(H,E)$ where $E$ is a UMD space, is given in
\cite{NVW1}. For our purposes it will be enough to consider UMD spaces with
type $2$. In this situation there is an easy subspace of integrable processes
which will be large enough for our considerations. Instead of UMD spaces with type $2$ one can also
assume that $E$ is a of martingale type $2$ space (cf.\ \cite{Brz2,Pi75}). We
do not consider this generality, because it is unnecessary for our applications to
stochastic partial differential equations.

A family $W_H=(W_H(t))_{t\in \R_+}$ of bounded linear operators from $H$ to
$L^2(\O)$ is called an {\em $H$-cylindrical Brownian motion} if
\renewcommand{\labelenumi}{(\roman{enumi})}
\renewcommand{\theenumi}{(\roman{enumi})}
\begin{enumerate}
\item $W_H h = (W_H(t)h)_{t\in \R_+}$ is a scalar-valued Brownian motion for
each $h\in H$, \item $ \E (W_H(s)g \cdot W_H(t)h) = (s\wedge t)\,[g,h]_{H}$ for
all $s,t\in \R_+, \ g,h\in H.$
\end{enumerate}
\renewcommand{\labelenumi}{(\arabic{enumi})}
\renewcommand{\theenumi}{(\arabic{enumi})}
We always assume that the $H$-cylindrical Brownian motion $W_H$ is with respect
to the filtration $(\F_t)_{t\geq 0}$, i.e., $W_Hh$ are Brownian motions with
respect to $(\F_t)_{t\geq 0}$ for all $h\in H$.

Now let $E$ be a UMD Banach space with type $2$. For an $H$-strongly
measurable and adapted $\Phi:[0,T]\times \O\to \g(H,E)$ which is in
$L^2((0,T)\times \O;\g(H,E))$ one can define the stochastic integral $\int_0^T
\Phi(s) \, d W_H(s)$ as a limit of integrals of adapted step processes and
(cf.\ \cite{Brz2} and references therein) there exists a constant $C$ not
depending on $\Phi$ such that
\begin{equation}\label{eq:type2estint}
(\E \Big\|\int_0^T \Phi(s) \, d W_H(s)\Big\|^2 )^{\frac12} \leq C
\|\Phi\|_{L^2((0,T)\times \O;\g(H,E))}.
\end{equation}
By a localization argument one may extend the class of integrable processes to
all $H$-strongly measurable and adapted $\Phi:[0,T]\times \O\to \g(H,E)$ which
are in $L^2(0,T;\g(H,E))$ a.s. Moreover, the estimate \eqref{eq:type2estint}
for the stochastic integral also implies type $2$.


In \cite{NVW1} two-sided estimates for the stochastic integral are given using
generalized square function norms. As a consequence of that theory one also
obtains the above results. The result that we will frequently use is the
following (cf.\ \cite{Brz2} and \cite[Corollary 3.10]{NVW1}).
\begin{proposition}\label{prop:stochinttype2}
Let $E$ be a UMD space with type $2$. Let $\Phi:[0,T]\times \O\to \g(H,E)$ be
$H$-strongly measurable and adapted. If $\Phi\in L^2(0,T;\g(H,E))$ a.s., then
$\Phi$ is stochastically integrable with respect to $W_H$ and for all $p\in (1,
\infty)$,
\[ \Big(\E \sup_{t\in
[0,T]}\Big\|\int_0^{t}\Phi(s)\,dW_H(s)\Big\|^p\Big)^{\frac{1}{p}}
\lesssim_{E,p} \|\Phi\|_{L^p(\O;L^2(0,T;\g(H,E)))}.
\]
\end{proposition}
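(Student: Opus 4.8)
The plan is to establish Proposition \ref{prop:stochinttype2} by combining the basic stochastic integral estimate \eqref{eq:type2estint} (together with its localized version) with a maximal inequality obtained from the Burkholder--Davis--Gundy inequality in UMD spaces. First I would reduce to the case $\Phi \in L^2((0,T)\times\O;\g(H,E))$: by the standard localization argument (stopping times $\tau_n = \inf\{t: \int_0^t \|\Phi(s)\|_{\g(H,E)}^2\,ds \ge n\}\wedge T$) one sees that $\Phi$ is stochastically integrable whenever $\Phi\in L^2(0,T;\g(H,E))$ a.s., and that the process $t\mapsto \int_0^t \Phi(s)\,dW_H(s)$ has a continuous version, so the supremum inside the expectation is measurable. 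Hence it suffices to prove the displayed $L^p$-estimate assuming the right-hand side is finite.

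For the quantitative estimate, the key input is the two-sided It\^o isomorphism / Burkholder--Davis--Gundy inequality for stochastic integrals in UMD spaces from \cite{NVW1}: for a UMD space $E$ and $p\in(1,\infty)$ one has
\[
\Big(\E\sup_{t\in[0,T]}\Big\|\int_0^t\Phi(s)\,dW_H(s)\Big\|^p\Big)^{1/p}
\lesssim_{E,p} \big\|\,s\mapsto \Phi(s)\,\big\|_{L^p(\O;\g(L^2(0,T),E))},
\]
where on the right one uses the generalized square-function (radonifying) norm of the operator $I_\Phi\in\g(L^2(0,T;H),E)$ associated with $\Phi$. Then I would invoke the type $2$ property of $E$ in the form of the embedding $L^2(0,T;\g(H,E))\hookrightarrow \g(L^2(0,T;H),E)$, which holds precisely because $E$ has type $2$ (this is the standard fact that for type $2$ spaces $\|R\|_{\g(L^2(S),E)}\lesssim_E \|r\|_{L^2(S;\g(\cdot,E))}$ when $R$ is represented by a square-integrable kernel $r$; compare the discussion of \eqref{eq:type2est} and \eqref{eq:type2estint} above). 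Applying this embedding pointwise in $\om$ and then taking $L^p(\O)$-norms yields
\[
\big\|I_\Phi\big\|_{L^p(\O;\g(L^2(0,T;H),E))} \lesssim_{E} \|\Phi\|_{L^p(\O;L^2(0,T;\g(H,E)))},
\]
and combining the two displays gives the claim. For $p=2$ one can alternatively bypass \cite{NVW1} and argue directly: Doob's maximal inequality applied to the $E$-valued martingale $M_t=\int_0^t\Phi\,dW_H$ reduces the supremum to the endpoint, and \eqref{eq:type2estint} finishes it; the general $p$ then follows from the vector-valued BDG inequality.

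The main obstacle is the passage from the plain moment estimate \eqref{eq:type2estint} (no supremum, $p=2$) to the maximal estimate for general $p\in(1,\infty)$, i.e.\ one genuinely needs the Burkholder--Davis--Gundy inequality for $E$-valued stochastic integrals, which is where the UMD hypothesis is used in an essential way; this is exactly the content imported from \cite[Corollary 3.10]{NVW1} (and, for the $\g(H,E)$-valued setting, \cite{Brz2}). Everything else---the type $2$ embedding of $L^2(0,T;\g(H,E))$ into $\g(L^2(0,T;H),E)$, and the localization producing stochastic integrability and path-continuity from the a.s.\ condition $\Phi\in L^2(0,T;\g(H,E))$---is routine. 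I would therefore present the proof as: (i) cite the UMD BDG/square-function estimate, (ii) apply the type $2$ embedding, (iii) note that localization removes the integrability restriction and gives the supremum its meaning, and close.
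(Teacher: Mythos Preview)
Your proposal is correct and is exactly the argument behind the citations the paper gives; note that the paper does not actually prove Proposition~\ref{prop:stochinttype2} but simply refers to \cite{Brz2} and \cite[Corollary~3.10]{NVW1}. Your sketch---UMD Burkholder--Davis--Gundy square-function estimate from \cite{NVW1}, followed by the type~$2$ embedding $L^2(0,T;\g(H,E))\hookrightarrow\g(L^2(0,T;H),E)$, plus localization---is precisely how that result is obtained in those references.
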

Again the estimate in Proposition \ref{prop:stochinttype2} implies that $E$
has type $2$.

We will also use the following basic fact for $\Phi$: as in
Proposition \ref{prop:stochinttype2} for $x^*\in E^*$,
\[
\Big\lb \int_0^T \Phi(s) \, d W_H(s),x^*\Big\rb = \int_0^T \Phi(s)^* x^* \, d W_H(s) \  a.s.
\]


\section{Deterministic convolutions\label{sec:detconv}}

Let $E$ be a Banach space. For $\alpha\in (0,1]$, $p\in[1, \infty]$ and $f\in
L^p(0,T;E)$, define the function $R_{\alpha}f\in L^p(0,T;E)$ by
\begin{equation}\label{Ralphadef}
(R_{\alpha}f)(t) = \frac{1}{\Gamma(\alpha)}\int_0^t (t-s)^{\alpha-1}
P(t,s) f(s)\, ds.
\end{equation}
This is well-defined by Young's inequality and there is a constant
$C\geq 0$ that only depends on $\alpha, p$ and $\sup_{0\leq s\leq
t\leq T} \|P(t,s)\|$ such that
\[\|R_{\alpha} f\|_{L^p(0,T;E)}\leq C T^{\alpha}\|f\|_{L^p(0,T;E)}.\]

\begin{lemma}\label{fact2}
Assume that (AT) and \ref{as:isom1a} with some $\eta_0\in (0,1]$
hold. Let $\alpha \in (0, \eta_0]$, $\delta,\lambda \in [0,1)$, and
$p\in [1, \infty)$ be such that $\alpha - \frac{1}{p} -
\delta-\lambda> 0$. Then for every $f\in L^p(0,T;E)$,
$R_{\alpha}f\in C^\lambda([0,T];\tE_{\delta})$ and there is a
constant $C\geq 0$ such that for all $f\in L^p(0,T;E)$,
\[\|R_{\alpha}f\|_{C^{\lambda}([0,T];\tE_{\delta})}\leq C\|f\|_{L^p(0,T;E)}.\]
\end{lemma}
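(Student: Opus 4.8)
The plan is to prove the two assertions separately: first that $R_\alpha f$ takes values in $\tE_\delta$ with a uniform bound, and second that it is $\lambda$-H\"older continuous in time with values in $\tE_\delta$. The key tool throughout is the smoothing estimate \eqref{eq:2_13} from Theorem \ref{thm:exist-parab}, namely $\|P(t,s)x\|_{E^t_\beta}\lesssim (t-s)^{-\beta}\|x\|_E$ (the case $\alpha=\beta$, $\beta_{\text{old}}=0$), combined with \ref{as:isom1a} to replace $E^t_\delta$ by the time-independent space $\tE_\delta$, and H\"older's inequality in the time variable to exploit the hypothesis $\alpha-\tfrac1p-\delta-\lambda>0$.

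First I would fix $t\in[0,T]$ and estimate $\|(R_\alpha f)(t)\|_{\tE_\delta}$. Since $\alpha\le\eta_0$ we may assume $\delta<\alpha\le\eta_0$ (if $\delta=0$ the statement reduces to boundedness in $E$, already noted before the lemma). Using \ref{as:isom1a}, $\|P(t,s)f(s)\|_{\tE_\delta}\lesssim\|P(t,s)f(s)\|_{E^t_\delta}\lesssim (t-s)^{-\delta}\|f(s)\|_E$; plugging this into \eqref{Ralphadef} gives
\[\|(R_\alpha f)(t)\|_{\tE_\delta}\lesssim \int_0^t (t-s)^{\alpha-1-\delta}\|f(s)\|_E\,ds,\]
and since $(\alpha-1-\delta)p'>-1$ (equivalently $\alpha-\delta>\tfrac1p$, which follows from $\alpha-\tfrac1p-\delta-\lambda>0$ and $\lambda\ge0$), H\"older's inequality in time bounds this by $C\|f\|_{L^p(0,T;E)}$ uniformly in $t$.

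For the H\"older estimate, fix $0\le s<t\le T$ and split $(R_\alpha f)(t)-(R_\alpha f)(s)$ in the standard way into three pieces: the tail $\tfrac1{\Gamma(\alpha)}\int_s^t (t-r)^{\alpha-1}P(t,r)f(r)\,dr$; the term $\tfrac1{\Gamma(\alpha)}\int_0^s\big((t-r)^{\alpha-1}-(s-r)^{\alpha-1}\big)P(t,r)f(r)\,dr$; and $\tfrac1{\Gamma(\alpha)}\int_0^s (s-r)^{\alpha-1}\big(P(t,r)-P(s,r)\big)f(r)\,dr$. For the first two I would again use \ref{as:isom1a} and \eqref{eq:2_13} to bring in the factor $(t-r)^{-\delta}$ resp.\ use $(s-r)^{-\delta}\ge(t-r)^{-\delta}$, then estimate $\int_s^t(t-r)^{\alpha-1-\delta}\,dr\lesssim (t-s)^{\alpha-\delta}$ and $\int_0^s|(t-r)^{\alpha-1}-(s-r)^{\alpha-1}|(s-r)^{-\delta}\,dr\lesssim(t-s)^{\alpha-\delta}$ (the latter via the elementary inequality $|(t-r)^{\alpha-1}-(s-r)^{\alpha-1}|$ being integrable and producing a $(t-s)^{\alpha-1}$-type gain after splitting near $r=s$), followed by H\"older in time against $\|f\|_{L^p}$ using $\alpha-\delta-\tfrac1p>\lambda$, so that $(t-s)^{\alpha-\delta-1/p'\cdot\text{something}}$ dominates $(t-s)^\lambda$. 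The third term is where Lemma \ref{lem:initialcons} enters: however, that lemma requires $f(r)\in E^r_{\alpha'}$ for some positive $\alpha'$, whereas here $f(r)\in E$ only. The fix is to use instead the H\"older continuity of $r\mapsto P(\cdot,r)x$ in the base norm — more precisely the estimate, available from \cite{Schn} as used in the proof of Lemma \ref{lem:initialcons}, that $\|(P(t,r)-P(s,r))y\|_{\tE_\delta}\lesssim (t-s)^{\mu'}(s-r)^{-\delta-\mu'}\|y\|_E$ for a suitable small $\mu'>0$ depending on $\kappa_{\mu,\nu}$; this follows by combining \eqref{eq:2_13}, \eqref{eq:2_15}, and the representation $P(t,r)-P(s,r)=\int_s^t \partial_\sigma P(\sigma,r)\,d\sigma$ together with the decay of $\partial_\sigma P(\sigma,r)$. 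Then $\int_0^s(s-r)^{\alpha-1}(t-s)^{\mu'}(s-r)^{-\delta-\mu'}\|f(r)\|_E\,dr\lesssim (t-s)^{\mu'}\|f\|_{L^p}$ provided $\alpha-\delta-\mu'-\tfrac1p>0$, which holds after shrinking $\mu'$, and $(t-s)^{\mu'}\lesssim (t-s)^\lambda$ on $[0,T]$ once $\mu'\le\lambda$...

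\medskip

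I should be slightly more careful: the cleanest route, and the one I would actually carry out, is to first prove the two \emph{boundedness} facts $R_\alpha\colon L^p(0,T;E)\to L^\infty(0,T;\tE_\delta)$ (done above) and $R_\alpha\colon L^p(0,T;E)\to C^{\lambda_0}([0,T];E)$ for suitable $\lambda_0$ (a known scalar-valued estimate using only \eqref{eq:2_13} with $\alpha=\beta=0$ plus the H\"older continuity of $P(\cdot,r)$ in $E$), and then \emph{interpolate}: since $\delta+\lambda<\alpha-\tfrac1p$, one can pick the endpoint exponents so that the real interpolation couple $(L^\infty(0,T;\tE_{\alpha'}),\,C^{\lambda'}([0,T];E))$ at the appropriate parameter lands inside $C^\lambda([0,T];\tE_\delta)$, using the interpolation identity for H\"older spaces $\big(C([0,T];X_0),C^{\theta'}([0,T];X_1)\big)$ and the reiteration/embedding $\tE_{\alpha'}\hookrightarrow\tE_\delta$. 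This mirrors the interpolation step in the proof of Lemma \ref{lem:initialcons} ($\delta=\theta\alpha$, $\lambda=(1-\theta)\alpha$) and avoids a direct three-term H\"older computation. The main obstacle, in either approach, is the third difference term involving $P(t,r)-P(s,r)$: one must extract a genuine positive power of $t-s$ while keeping the singularity $(s-r)^{-\delta-\varepsilon}$ integrable against $L^p$, which forces the strict inequality $\alpha-\tfrac1p-\delta-\lambda>0$ to be used with a little room to spare; handling this requires the fine derivative estimates for the evolution family from \cite{AT2,Ya,Schn} rather than just \eqref{eq:2_13}, exactly as in Lemma \ref{lem:initialcons}. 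The uniformity of all constants in $f$ is automatic since every bound is linear in $\|f\|_{L^p(0,T;E)}$.
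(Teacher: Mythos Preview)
Your three--term decomposition is exactly the one the paper uses (via \cite[Lemma~4.1]{VeZi}), and your treatment of the pointwise bound and of the first two difference terms is fine. The issue is the third term
\[
I_3=\Big\|\int_0^s (s-r)^{\alpha-1}\big(P(t,r)-P(s,r)\big)f(r)\,dr\Big\|_{\tE_\delta},
\]
where you correctly flag that Lemma~\ref{lem:initialcons} does not apply to $f(r)\in E$ directly, but then take an unnecessarily complicated detour.

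The paper's trick is much simpler than invoking derivative estimates for $\partial_\sigma P(\sigma,r)$: use the evolution property to factor
\[
P(t,r)-P(s,r)=\big(P(t,s)-I\big)P(s,r).
\]
Now $P(s,r)$ does the smoothing: by \eqref{eq:2_13}, $P(s,r)f(r)\in E^s_{\delta+\lambda}$ with $\|P(s,r)f(r)\|_{E^s_{\delta+\lambda}}\lesssim (s-r)^{-\delta-\lambda}\|f(r)\|_E$, and since $\delta+\lambda<\alpha\le\eta_0$ one may apply Lemma~\ref{lem:initialcons} with base point $s$ (not $r$!) to obtain
\[
\|(P(t,s)-I)P(s,r)f(r)\|_{\tE_\delta}\lesssim |t-s|^{\lambda}\,\|P(s,r)f(r)\|_{E^s_{\delta+\lambda}}\lesssim |t-s|^{\lambda}(s-r)^{-\delta-\lambda}\|f(r)\|_E.
\]
Inserting this into $I_3$ and applying H\"older's inequality gives $I_3\lesssim |t-s|^{\lambda}\|f\|_{L^p(0,T;E)}$, since $(\alpha-1-\delta-\lambda)p'>-1$ is exactly the hypothesis $\alpha-\tfrac1p-\delta-\lambda>0$.

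Your alternative route via $(t-s)^{\mu'}$ could also be made to work, but note that the final step is backwards: to conclude $(t-s)^{\mu'}\lesssim (t-s)^{\lambda}$ for small $t-s$ you need $\mu'\ge\lambda$, not $\mu'\le\lambda$. With the correct inequality you are forced to take $\mu'\in[\lambda,\alpha-\delta-\tfrac1p)$, which is nonempty precisely by the hypothesis; but at that point you have essentially rederived the estimate above, so the factorization through $P(t,s)$ is both cleaner and avoids appealing to the fine structure of $\partial_\sigma P(\sigma,r)$ from \cite{AT2,Ya,Schn}.
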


\begin{proof}
This can be proved in a similar way as in \cite[Lemma 4.1]{VeZi}, by replacing
the fractional domain spaces by $\tE_{\eta}$. The only part of the proof of
\cite[Lemma 4.1]{VeZi} that requires a different argument is the estimate for
\[I_3 = \Big\|\int_0^s (s-r)^{\alpha -1 } (P(t,r) - P(s,r)) f(r) \, dr\Big\|_{\tE_{\delta}}.\]
We have to show that $I_3 \lesssim |t-s|^{\lambda}$. It follows
from Lemma \ref{lem:initialcons} and \eqref{eq:2_13} that for $x
\in E$
\begin{align*}
\|P(t,r) x - P(s,r)x\|_{\tE_{\delta}} &= \|(P(t,s) - I)P(s,r)x\|_{\tE_{\delta}}
\\ & \lesssim |t-s|^{\lambda} \| P(s,r)x\|_{\tE_{\delta+\lambda}}
\\ & \lesssim |t-s|^{\lambda} \| P(s,r)x\|_{E_{\delta+\lambda}^r}
\\ & \lesssim |t-s|^{\lambda} (s-r)^{-\lambda-\delta}
\|x\|.
\end{align*}
This implies the estimate for $I_3$.
\end{proof}

Recall that $E^t_{-\theta}$ be the completion of $E$ with respect to
the norm $\|x\|_{E^t_{-\theta}} = \|(-A_w(t))^{-\theta}x\|$.

The next result will be formulated for a family $\{\phi(t):t\in [0,T]\}$ such
that for all $t\in [0,T]$, $\phi(t,\omega)\in E_{-\theta}^t$, where
$(-A_w)^{-\theta} \phi$ is a strongly measurable function from $[0,T]$ into $E$
and $\theta\in [0,1)$ is fixed. We denote the deterministic convolution by
\[P*\Phi(t) := \int_0^t P(t,s) \phi(s) \, d s,\]
where $\phi$ is as above.

First we explain some general measurability properties which hold under the
(AT) conditions. Let $\theta\in [0,\mu)$.
One has that for all $0\leq s<t\leq T$, $P(t,s)(w-A(s))^{\theta}$ has an
extension to an operator in $\calL(E)$ (see \eqref{eq:2_15}). We claim that as
a function of $(s,t)$ where $0\leq s<t\leq T$, this extension is $E$-strongly
measurable.

Indeed, let $A_n(t) = n A(t)R(n;A(t))$ be the Yosida approximations of $A(t)$.
Then for all $x\in E$ (see proof of \cite[Proposition 3.1]{Ya90}) one has
\[\limn P_n(t,s) (w-A_{n}(s))^{\theta} x = P(t,s) (w-A(s))^{\theta} x,\]
where $P_n(t,s)$ is the evolution family generated by $A_n(t)$. Since
$(t,s)\mapsto P_n(t,s) (w-A_{n}(s))^{\theta} x$ is strongly measurable, the
claim follows.

It follows that for $0\leq s<t\leq T$, $P(t,s)$ has a unique extension to an
operator in $\calL(E^{s}_{-\theta},E)$. We will denote this extension again by
$P(t,s)$. Below we will need to integrate $P(t,s)\phi(s)$ with respect to
$ds$. This can be made rigorous in the same way as in \cite{MSchn} using the
extension of $P(t,s)$ to $\calL(E^s_{-\theta},E)$. If $\phi$ is as above and
$(-A_{w}(\cdot))^{-\theta}\phi\in L^p(0,T;E)$ one could equivalently say $\phi
\in X_{-\theta}$ a.s., where $X = L^p(0,T;E)$ and $X_{-\theta}$ is the
extrapolation space under $A_w(\cdot)$ as a sectorial operator on
$L^p(0,T;E)$. Below we will not explicitly use the extrapolation spaces and
just interpret $P(t,s)\phi(s)$ as $P(t,s)(-A_{w}(s))^{\theta}
(-A_{w}(s))^{-\theta}\phi(s)$. This is allowed since for $x\in E^s_{-\theta}$
it is easily checked that
\[P(t,s) x = P(t,s)(-A_{w}(s))^{\theta} (-A_{w}(s))^{-\theta} x.\]

\begin{proposition}\label{prop:detconv}
Assume that (AT) and \ref{as:isom1a}  hold. Let $\theta\in [0,\mu)$
Let $p\in(1, \infty]$, $\delta\in [0,1)$ and $\lambda\in (0,1)$ be such that
$\lambda+\delta+\frac{1}{p}<\min\{1-\theta,\eta_0\}$. Then there exists a
constant $C_T$ with $\lim_{T\downarrow 0} C_T = 0$ such that for all
$(-A_w)^{-\theta}\phi\in L^p(0,T;E)$,
\begin{equation}\label{eq:detconv}
\|P*\phi\|_{C^\lambda([0,T];\tE_{\delta})} \leq C_T
\|(-A_w)^{-\theta}\phi\|_{L^p(0,T;E)}.
\end{equation}
\end{proposition}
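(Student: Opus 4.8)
The plan is to reduce everything to Lemma \ref{fact2} by absorbing the fractional power into the convolution. Write $\psi = (-A_w(\cdot))^{-\theta}\phi$, so $\psi \in L^p(0,T;E)$ and $\phi(s) = (-A_w(s))^{\theta}\psi(s)$ in the extrapolation sense explained above. The key observation is the factorization identity
\[
P*\phi(t) = \int_0^t P(t,s) (-A_w(s))^{\theta}\psi(s)\, ds,
\]
and by \eqref{eq:2_15} the operator $P(t,s)(-A_w(s))^{\theta}$ behaves like $(t-s)^{-\theta}P(t,s)$ up to constants. So morally $P*\phi$ is comparable to $R_{1-\theta}\psi$ (with the Gamma-function constant), which is exactly the object Lemma \ref{fact2} handles. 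The condition $\lambda + \delta + \frac1p < 1-\theta$ is precisely the hypothesis $\alpha - \frac1p - \delta - \lambda > 0$ of Lemma \ref{fact2} with $\alpha = 1-\theta$, and the extra requirement $\lambda+\delta+\frac1p < \eta_0$ ensures $\alpha \le \eta_0$ is not needed directly but that the smoothing into $\tE_\delta$ is available; one takes $\alpha = \min\{1-\theta,\eta_0\} - \epsilon$ for small $\epsilon>0$ so that $\alpha \in (0,\eta_0]$ and still $\alpha - \frac1p - \delta - \lambda > 0$.

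First I would set up the estimate for the Hölder seminorm directly, mimicking the proof of Lemma \ref{fact2} (equivalently \cite[Lemma 4.1]{VeZi}), splitting $P*\phi(t) - P*\phi(s)$ for $s<t$ into the three standard pieces: the tail integral $\int_s^t P(t,r)\phi(r)\,dr$, the term $\int_0^s ((t-s)^{\alpha'}$-type$)$ coming from the singularity difference, and the crucial term
\[
I_3 = \Big\| \int_0^s (P(t,r) - P(s,r)) (-A_w(r))^{\theta}\psi(r)\, dr \Big\|_{\tE_\delta}.
\]
For the first two pieces one uses \eqref{eq:2_15} to bound $\|P(t,r)(-A_w(r))^\theta\| \lesssim (t-r)^{-\theta}$ together with the smoothing $\|P(t,r)x\|_{\tE_\delta} \lesssim (t-r)^{-\delta}\|x\|$ from \eqref{eq:2_13} and \ref{as:isom1a}, then Hölder's inequality in $r$; the exponent bookkeeping closes exactly because $\theta + \delta + \frac1p + \lambda < 1$. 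The statement $\lim_{T\downarrow 0} C_T = 0$ comes out automatically since every resulting time integral carries a positive power $T^{1-\theta-\delta-\lambda-1/p}$ (or similar) in front.

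The main obstacle is the term $I_3$: unlike in the autonomous case, $P(t,r)-P(s,r)$ does not factor simply as $(P(t,s)-I)P(s,r)$ when a fractional power $(-A_w(r))^\theta$ on the right is involved, because the fractional power is taken at time $r$, not at time $s$. The remedy is to first move the fractional power across using \eqref{eq:2_15}: write $(P(t,r)-P(s,r))(-A_w(r))^\theta\psi(r)$ and estimate it as $\|(P(t,s)-I)P(s,r)(-A_w(r))^\theta\|_{\calL(E,\tE_\delta)}\|\psi(r)\|$, then apply the computation already carried out in the proof of Lemma \ref{fact2}, namely
\[
\|(P(t,s)-I)P(s,r)y\|_{\tE_\delta} \lesssim |t-s|^\lambda \|P(s,r)y\|_{E^r_{\delta+\lambda}} \lesssim |t-s|^\lambda (s-r)^{-\lambda-\delta}\|y\|,
\]
but now with $y = (-A_w(r))^\theta\psi(r)$ replaced by the bound $\|P(s,r)(-A_w(r))^\theta\| \lesssim (s-r)^{-\theta}$ via \eqref{eq:2_15} applied to $P(s,r)$ instead of $P(t,r)$; combining gives $\|(P(t,r)-P(s,r))(-A_w(r))^\theta\|_{\calL(E,\tE_\delta)} \lesssim |t-s|^\lambda (s-r)^{-\lambda-\delta-\theta}$. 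Integrating this against $\|\psi(r)\|$ over $r\in(0,s)$ with Hölder's inequality in the conjugate exponent $p'$ yields $I_3 \lesssim |t-s|^\lambda \|\psi\|_{L^p(0,T;E)}$ provided $(\lambda+\delta+\theta)p' < 1$, i.e. exactly $\lambda+\delta+\theta+\frac1p<1$, which is our hypothesis. Finally, the sup-norm bound $\|P*\phi(t)\|_{\tE_\delta} \lesssim C_T\|\psi\|_{L^p}$ follows from the single estimate $\|P(t,r)(-A_w(r))^\theta\|_{\calL(E,\tE_\delta)} \lesssim (t-r)^{-\theta-\delta}$ and Hölder in $r$, completing the $C^\lambda([0,T];\tE_\delta)$ bound.
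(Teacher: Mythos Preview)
Your direct approach is correct in spirit and will close, but it differs from the paper's proof, which is cleaner and more modular. Rather than re-deriving the three-piece splitting from Lemma~\ref{fact2} with the extra factor $(-A_w(r))^{\theta}$ carried along, the paper uses the factorization identity itself: choose $\alpha$ with $\lambda+\delta+\tfrac1p<\alpha<\min\{1-\theta,\eta_0\}$, define
\[
\zeta_\alpha(t)=\frac{1}{\Gamma(1-\alpha)}\int_0^t (t-s)^{-\alpha}P(t,s)\phi(s)\,ds,
\]
and show by \eqref{eq:2_15} and Young's inequality that $\|\zeta_\alpha\|_{L^p(0,T;E)}\le C_T\|(-A_w)^{-\theta}\phi\|_{L^p(0,T;E)}$ with $C_T\to 0$. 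A Fubini computation then gives $P*\phi=R_\alpha(\zeta_\alpha)$ exactly, so Lemma~\ref{fact2} applies as a black box. The advantage is that the $\theta$-singularity is absorbed once, into $\zeta_\alpha$, and the delicate H\"older estimate (including the $I_3$ term) is already done in Lemma~\ref{fact2}; your approach re-proves that lemma in a slightly harder setting.

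One small gap in your argument for $I_3$: you invoke $\|P(s,r)(-A_w(r))^\theta\|\lesssim (s-r)^{-\theta}$ from \eqref{eq:2_15}, but what you actually need is a bound on $\|P(s,r)(-A_w(r))^\theta\|_{\calL(E,E^s_{\delta+\lambda})}$, since Lemma~\ref{lem:initialcons} controls $\|(P(t,s)-I)z\|_{\tE_\delta}$ in terms of $\|z\|_{E^s_{\delta+\lambda}}$, not $\|z\|_E$. This is easily fixed by the midpoint trick $P(s,r)=P(s,\tfrac{s+r}{2})P(\tfrac{s+r}{2},r)$, applying \eqref{eq:2_15} to the second factor and \eqref{eq:2_13} to the first, yielding the claimed $(s-r)^{-\theta-\delta-\lambda}$. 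With that adjustment your proof goes through.
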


\begin{proof}
First note that
\[\{(t,s):0\leq s<t\leq T\}\ni (t,s)\mapsto P(t,s) \phi(s) = P(t,s) (-A_{w}(s))^{\theta}
(-A_{w}(s))^{-\theta} \phi(s)\] is a strongly measurable $E$-valued function.

Let $\alpha>0$ be such that
$\lambda+\delta+\frac{1}{p}<\alpha<\min\{1-\theta,\eta_0\}$. Define
$\zeta_{\alpha}:[0,T]\to E$ as
\[\zeta_{\alpha}(t) = \frac{1}{\Gamma(1-\alpha)}\int_0^t (t-s)^{-\alpha} P(t,s) \phi(s) \, ds.\]
Then by \eqref{eq:2_15}, for each $t\in [0,T]$,
\begin{align*}
\|\zeta_{\alpha}(t)\|&\leq \frac{1}{\Gamma(1-\alpha)} \int_0^t (t-s)^{-\alpha}
\|P(t,s) \phi(s)\| \, ds \\ & \lesssim \int_0^t (t-s)^{-\alpha-\theta}
\|(-A_w(s))^{-\theta}\phi(s)\| \, ds.
\end{align*}
Therefore, by Young's inequality
\begin{align*}
\|\zeta_{\alpha}\|_{L^p(0,T;E)}^p &\lesssim \int_0^T \Big|\int_0^t
(t-s)^{-\alpha-\theta} \|(-A_w(s))^{-\theta}\phi(s)\| \, ds\Big|^{p} \, dt
\\ & \leq C_T^p \|(-A_w(s))^{-\theta}\phi\|_{L^p(0,T;E)}^p.
\end{align*}

Define $\zeta:[0,T]\to E$ as $\zeta = P*\phi$. By H\"olders's inequality and
$\theta<1-\frac{1}{p}$ this is well-defined. We claim that $\zeta =
R_{\alpha}(\zeta_{\alpha})$. This would complete the proof by Lemma \ref{fact2}
and
\begin{align*}
\|\zeta\|_{C^\lambda([0,T];\tE_{\delta})} &=
\|R_{\alpha}(\zeta_{\alpha})\|_{C^\lambda([0,T];\tE_{\delta})} \\ & \lesssim
C_T \|\zeta_{\alpha}\|_{L^p(0,T;E)}\lesssim
C_T\|(-A_w)^{-\theta}\phi\|_{L^p(0,T;E)}.
\end{align*}
To prove the claim notice that by Fubini's theorem for all $t\in [0,T]$,
\begin{align*}
R_{\alpha} (\zeta_\alpha) &= \frac{1}{\Gamma(\alpha)}\int_0^t (t-s)^{\alpha-1}
P(t,s) \zeta_\alpha(s)\, ds
\\ & = \frac{1}{\Gamma(1-\alpha)\Gamma(\alpha)}\int_0^t \int_0^s (t-s)^{\alpha-1} (s-r)^{-\alpha}
P(t,r)\phi(r) \, dr \, ds
\\ & = \frac{1}{\Gamma(1-\alpha)\Gamma(\alpha)}\int_0^t \int_r^t (t-s)^{\alpha-1} (s-r)^{-\alpha}
P(t,r)\phi(r)  \, ds \, dr
\\ & = \int_0^t P(t,r)\phi(r)  \, dr = \zeta(t).
\end{align*}
\end{proof}

\section{Stochastic convolutions\label{sec:stochconv}}
Let $(\O,\F,\P)$ be a complete probability space with a filtration $(\F_t)_{t\in
[0,T]}$. Let $E$ be a Banach space and $H$ be a separable Hilbert space. Let
$W_H$ be a cylindrical Wiener process with respect to $(\F_t)_{t\in [0,T]}$.
We strengthen the hypothesis \ref{as:isom1a} from page \pageref{as:isom}.

\let\ALTERWERTA\theenumi
\let\ALTERWERTB\labelenumi
\def\theenumi{(H1)$_{\eta_0}$}
\def\labelenumi{(H1)$_{\eta_0}$}
\begin{enumerate}
\item \label{as:isom} There exists an $\eta_0\in (0,1]$ and a family
of spaces $(\tE_{\eta})_{\eta\in [0,\eta_0]}$ such that each $\tE_{\eta}$ is a
UMD spaces with type $2$,
\[\tE_{\eta_0} \hookrightarrow \tE_{\eta_1} \hookrightarrow \tE_{\eta_2} \hookrightarrow \tE_{0}=E, \ \ \ 0\leq \eta_2\leq \eta_1\leq \eta_0.\]
and for all $\eta\in [0,\eta_0]$
\[(E, D(A(t)))_{\eta, 2} \hookrightarrow \tE_{\eta}\hookrightarrow E\]
with uniform constants in $t\in[0,T]$.
\end{enumerate}
\let\theenumi\ALTERWERTA
\let\labelenumi\ALTERWERTB

The next result will be formulated for a family $\{\Phi(t,\omega):t\in [0,T],
\omega\in\O\}$ such that for all $t\in [0,T]$ and all $\omega\in \O$,
$\Phi(t,\omega)\in \calL(H,E_{-\theta}^t)$, where $(-A_w)^{-\theta} \Phi$ is an
$H$-strongly measurable and adapted process from $[0,T]\times\O$ into $\calL(H,E)$ and
$\theta\in [0,\frac12)$ is fixed. We denote the stochastic convolution by
\[P\diamond \Phi(t) := \int_0^t P(t,s) \Phi(s) \, d W_H(s),\]
where $\Phi$ is as above.
The following extends results from \cite{DPKZ,Brz2,Sei}.

\begin{theorem}\label{thm:facttype2}
Assume (AT) and \ref{as:isom}. Let $\theta\in [0,\mu\wedge \tfrac12)$. Let
$p\in (2, \infty)$ and let $\delta,\lambda>0$ be such that
$\delta+\lambda+\frac1p<\min\{\frac12-\theta, \eta_0\}$. Let
$(-A_w)^{-\theta}\Phi:[0,T]\times \O\to \g(H,E)$ be $H$-strongly measurable and
adapted such that $(-A_w)^{-\theta}\Phi\in L^p(0,T;\g(H,E))$ a.s. Then for all
$t\in [0,T]$, $s\mapsto P(t,s)\Phi(s)\in \g(H,E)$ is $H$-strongly measurable
and adapted, $P \diamond \Phi$ exists in $\tE_{\delta}$ and is
$\lambda$-H\"older continuous and
there exists a constant $C\geq 0$ independent of $\Phi$ such that
\begin{equation}\label{eq:Cldptype2}
\begin{aligned}
\E\|P \diamond \Phi \|^p_{C^\lambda([0,T];\tE_{\delta})} & \leq C \E
\|(-A_w)^{-\theta}\Phi\|_{L^p(0,T;\g(H,E))}^p.
\end{aligned}
\end{equation}
\end{theorem}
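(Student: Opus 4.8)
The plan is to mimic the deterministic factorization argument of Proposition \ref{prop:detconv}, replacing the Bochner integral by a stochastic integral and the scalar Young inequality by the $\gamma$-norm estimates from Proposition \ref{prop:stochinttype2}. First I would fix $\alpha>0$ with $\delta+\lambda+\frac1p<\alpha<\min\{\frac12-\theta,\eta_0\}$ and define the auxiliary process
\[
\zeta_\alpha(t) := \frac{1}{\Gamma(1-\alpha)}\int_0^t (t-s)^{-\alpha} P(t,s)\Phi(s)\,dW_H(s),
\]
where $P(t,s)\Phi(s)$ is understood, exactly as in Section \ref{sec:detconv}, via the bounded extension $P(t,s)(-A_w(s))^\theta\in\calL(E)$ acting on $(-A_w(s))^{-\theta}\Phi(s)\in\g(H,E)$. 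The first task is to check that for each $t$ the integrand $s\mapsto (t-s)^{-\alpha}P(t,s)\Phi(s)$ is $H$-strongly measurable, adapted and lies in $L^2(0,t;\g(H,E))$ a.s., so that the stochastic integral makes sense by Proposition \ref{prop:stochinttype2}. Measurability and adaptedness follow from the Yosida-approximation argument already given before Proposition \ref{prop:detconv}; for integrability one uses \eqref{eq:2_15} in the form $\|P(t,s)(-A_w(s))^\theta\|_{\calL(E)}\lesssim (t-s)^{-\theta}$, together with the ideal property $\|TR\|_{\g(H,E)}\le\|T\|_{\calL(E)}\|R\|_{\g(H,E)}$, to get
\[
\|(t-s)^{-\alpha}P(t,s)\Phi(s)\|_{\g(H,E)} \lesssim (t-s)^{-\alpha-\theta}\|(-A_w(s))^{-\theta}\Phi(s)\|_{\g(H,E)},
\]
and $\alpha+\theta<\frac12$ makes this square-integrable in $s$ against an $L^p$ (hence $L^2$) function.

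Next I would estimate $\zeta_\alpha$ in $L^p(\O;L^p(0,T;E))$. By Proposition \ref{prop:stochinttype2} (with the same $p$), for each fixed $t$,
\[
\Big(\E\|\zeta_\alpha(t)\|^p\Big)^{1/p} \lesssim \Big(\E\Big(\int_0^t (t-s)^{-2(\alpha+\theta)}\|(-A_w(s))^{-\theta}\Phi(s)\|_{\g(H,E)}^2\,ds\Big)^{p/2}\Big)^{1/p};
\]
raising to the $p$-th power, integrating over $t\in[0,T]$, applying Fubini and then Young's inequality for the convolution with the kernel $s\mapsto s^{-2(\alpha+\theta)}\in L^1(0,T)$ (valid since $2(\alpha+\theta)<1$), yields
\[
\E\|\zeta_\alpha\|_{L^p(0,T;E)}^p \le C_T^p\,\E\|(-A_w)^{-\theta}\Phi\|_{L^p(0,T;\g(H,E))}^p.
\]
Then the claim is the pathwise identity $P\diamond\Phi = R_\alpha(\zeta_\alpha)$, proved by the same stochastic Fubini computation as in Proposition \ref{prop:detconv}: interchanging the (deterministic) $ds$-integral defining $R_\alpha$ with the stochastic $dW_H$-integral defining $\zeta_\alpha$, and using $\int_r^t (t-s)^{\alpha-1}(s-r)^{-\alpha}\,ds = \Gamma(\alpha)\Gamma(1-\alpha)$. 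Once this identity is in place, Lemma \ref{fact2} applied pathwise to $f=\zeta_\alpha$ (legitimate because $\alpha-\frac1p-\delta-\lambda>0$, $\alpha\le\eta_0$, $\delta,\lambda\in[0,1)$) gives $P\diamond\Phi=R_\alpha\zeta_\alpha\in C^\lambda([0,T];\tE_\delta)$ with $\|R_\alpha\zeta_\alpha\|_{C^\lambda([0,T];\tE_\delta)}\le C\|\zeta_\alpha\|_{L^p(0,T;E)}$; taking $p$-th moments and combining with the $\zeta_\alpha$ estimate yields \eqref{eq:Cldptype2}. The $H$-strong measurability and adaptedness of $s\mapsto P(t,s)\Phi(s)$ asserted in the statement is exactly the fact checked in the first step.

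The main obstacle I expect is the rigorous justification of the stochastic Fubini theorem needed for the identity $P\diamond\Phi=R_\alpha\zeta_\alpha$: one must verify the joint measurability and integrability hypotheses of a stochastic Fubini theorem (e.g.\ in the $\gamma$-radonifying / UMD-type-2 setting of \cite{NVW1}) for the double integral $\int_0^t\int_0^s (t-s)^{\alpha-1}(s-r)^{-\alpha}P(t,r)\Phi(r)\,dW_H(r)\,ds$, where the resolvent-type singularities $(t-s)^{\alpha-1}$, $(s-r)^{-\alpha}$ and $(t-r)^{-\theta}$ all interact. The bookkeeping is handled by the exponent condition $\alpha+\theta<\frac12$, which keeps every intermediate function square-integrable, but making the interchange precise (rather than formal) is the delicate point; everything else is a routine adaptation of the deterministic argument and of \cite{DPKZ,Brz2,Sei}. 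A secondary technical point is that the second half of \eqref{eq:squarefunction} is irrelevant here — only the boundedness estimate \eqref{eq:type2estint}/Proposition \ref{prop:stochinttype2} is used — and that $C_T\to 0$ as $T\downarrow 0$ because $\|s^{-2(\alpha+\theta)}\one_{(0,T)}\|_{L^1}\to 0$.
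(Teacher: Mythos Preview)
Your proposal is correct and follows essentially the same factorization argument as the paper: choose $\alpha$ in the stated range, define $\zeta_\alpha$, estimate it in $L^p((0,T)\times\O;E)$ via Proposition~\ref{prop:stochinttype2}, \eqref{eq:2_15} and Young's inequality, identify $P\diamond\Phi=R_\alpha\zeta_\alpha$ by the stochastic Fubini theorem (the paper verifies this by pairing with $x^*\in E^*$ and citing \cite{DPKZ}), and conclude with Lemma~\ref{fact2} applied pathwise. The only point you leave slightly implicit is the joint measurability of $(t,\omega)\mapsto\zeta_\alpha(t,\omega)$ needed before integrating in $t$; the paper handles this by appealing to \cite[Appendix]{NVW3}.
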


\begin{proof}
We claim that
\[\{(t,s): 0\leq s<t\leq T\}\ni (t,s)\mapsto P(t,s)\Phi(s)\in \g(H,E)\]
is $H$-strongly measurable and for all $t\in [0,T]$ and
\[(0,t)\ni s\mapsto P(t,s)\Phi(s)\in \g(H,E)\]
is $H$-strongly adapted. Indeed, this follows from the assumption and the
remarks before Proposition \ref{prop:detconv} as soon as we write
\[P(t,s)\Phi(s) = P(t,s)(w-A(s))^{\theta}(w-A(s))^{-\theta}\Phi(s).\]

Let $\delta$ and $\lambda$ be as in the theorem and let
$\alpha$ be such that
$\delta+\lambda+\frac1p<\alpha<\min\{\frac12-\theta,\eta_0\}$. Define
$\zeta_\alpha:[0,T]\times\O\to E$ as
\[\zeta_\alpha(t) = \frac{1}{\Gamma(1-\alpha)}\int_0^t (t-s)^{-\alpha} P(t,s)\Phi(s) \, d
W_H(s).\] Then by Proposition \ref{prop:stochinttype2}, \eqref{eq:2_15}, Young's inequality and \cite[Appendix]{NVW3} $\zeta_{\alpha}$ is well-defined in $L^p((0,T)\times\O;E)$ and jointly measurable and
moreover we have
\begin{align*}
\|\zeta_{\alpha}\|_{L^p((0,T)\times\O;E)} &\lesssim  \Big(\E \int_0^T
\Big(\int_0^t \|(t-s)^{-\alpha} P(t,s)\Phi(s)\|^2_{\g(H,E)} \,
ds\Big)^{\frac{p}{2}} \, dt \Big)^{\frac1p}
\\ & \lesssim \Big(\E \int_0^T \Big(\int_0^t
(t-s)^{-2\alpha-2\theta} \|(-A_w(s))^{-\theta}\Phi(s)\|^2_{\g(H,E)}
\, ds\Big)^{\frac{p}{2}} \, dt \Big)^{\frac1p}
\\ & \leq C
\Big(\E\|(-A_w)^{-\theta}\Phi\|_{L^p(0,T;\g(H,E))}^p\Big)^{\frac1p}.
\end{align*}
Here we used $\alpha<\frac12-\theta$. Let $\O_0$ with $P(\O_0)=1$ be
such that $\zeta_\alpha(\cdot, \omega)\in L^p(0,T;E)$ for all
$\omega\in \O_0$. We may apply Lemma \ref{fact2} to obtain that for
all $\omega\in \O_0$,
\[R_\alpha \zeta_\alpha(\cdot, \omega) \in C^\lambda([0,T];\tE_{\delta})\]
and
\begin{equation}\label{eq:estRzeta1type2}
\|R_\alpha \zeta_\alpha(\cdot, \omega)\|_{C^\lambda([0,T];\tE_{\delta})}
\lesssim C \|\zeta_\alpha(\cdot, \omega)\|_{L^p(0,T;E)}.
\end{equation}

Define $\zeta:[0,T]\times\O\to E$ as $\zeta = P\diamond \Phi$. Since
$\theta<\frac12-\frac1p$, one may check that this is well-defined. We claim
that for all $t\in [0,T]$, for almost all $\omega\in \O$, we have
\begin{equation}\label{eq:equalconttype2}
\zeta(t,\omega) = (R_\alpha \zeta_\alpha(\cdot, \omega))(t).
\end{equation}
It suffices to check that for all $t\in [0,T]$ and $x^* \in E^*$,
almost surely we have
\[\lb \zeta(t), x^*\rb = \frac{1}{\Gamma(\alpha)}\int_0^t (t-s)^{\alpha-1} \lb P(t,s) \zeta_\alpha(s), x^*\rb \, ds.\]
As in Proposition \ref{prop:detconv} this follows from the (stochastic) Fubini
theorem (see \cite{DPKZ}). Therefore, the above estimates imply
\eqref{eq:Cldptype2}.

\end{proof}

For Hilbert spaces $E$ we can prove a maximal regularity result in
the non-autonomous setting. The autonomous case has been considered
in \cite[Theorem 6.14]{DPZ}. Our proof below is different from
\cite{DPZ} even in the autonomous case. We briefly recalled some results on $H^\infty$-calculus in Section \ref{subsec:Hinfty}. Note that we use formulations for $A(t)$ instead of $-A(t)$.

Assume (AT1) and the following condition on the operators
$(A(t))_{t\in [0,T]}$.
\begin{enumerate}
\item[($H^\infty$)] There exists constant $w\in \R$, $C>0$ and $\sigma\in \pi/2,\pi)$ such
that for all $t\in [0,T]$, $A_w(t)$ has a bounded
$H^\infty(\Sigma_{\sigma})$-calculus on
$\Sigma_{\varphi}$ and
\[C:=\sup_{t\in [0,T]}\big(\{\|f(A_w(t))\|:\ \|f\|_{H^\infty(\Sigma_\sigma)}\le 1\}\big) <\infty.\]
\end{enumerate}
Condition ($H^\infty$) has also appeared in \cite{VeZi} (with $A(t)$ replaced by $-A(t)$).
In the autonomous ($H^\infty$) has been used in \cite{DevNWe} to obtain maximal
regularity for equations with additive noise in Banach spaces. This has been
extended to the non-autonomous setting in \cite{VeZi}.

We reformulate the sufficient conditions from Remark \ref{rem:Hinfty0} in our situation here.
\begin{remark}\label{rem:Hinfty}
Each of the following two conditions is sufficient for ($H^\infty$) for a Hilbert space $E$.
\begin{enumerate}
\item If (AT1) holds and there exists a $w\in \R$, such that each $A_w(t)$ generates an analytic contraction semigroup, then ($H^\infty$) holds.

\item If there exists a $w\in \R$ such that each $-A_w(t)$ is positive and
self-adjoint, then for all $\sigma\in (\pi/2,\pi)$, the condition ($H^\infty$) holds with
$C=1$.
\end{enumerate}
\end{remark}

In the following result we use ($H^\infty$) to obtain a maximal regularity result for the stochastic convolution. Recall that by \eqref{eq:2_15}, $\|(-A_w(t))^{\frac12} P(t,s)\|\leq C (t-s)^{-\frac12}$.

\begin{theorem}\label{thm:maxreg}
Let $E$ be a Hilbert space. Assume that $(A(t))_{t\in [0,T]}$
satisfies (AT) and ($H^\infty$). If $\Phi:[0,T]\times \O\to \g(H,E)$
is $H$-strongly measurable and adapted and $\Phi\in
L^2(0,T;\g(H,E))$ a.s., then $(-A_w(\cdot))^{\frac12}P\diamond
\Phi\in L^2(0,T;E)$ a.s. Moreover there is a constant $C$
independent of $\Phi$ such that
\begin{equation}\label{eq:maxreg}
\E\|t\mapsto (-A_w(t))^{\frac12} (P\diamond
\Phi)(t)\|_{L^2(0,T;E)}^2 \leq C \E\|\Phi\|_{L^2(0,T;\g(H,E))}^2.
\end{equation}
\end{theorem}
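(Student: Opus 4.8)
The plan is to use the square function estimate \eqref{eq:squarefuncest0}, which is available here because ($H^\infty$) is assumed and $E$ is Hilbert, together with the commutation of the stochastic integral with bounded operators and with adjoints. First I would reduce, by the usual localization argument, to the case $\Phi\in L^2((0,T)\times\O;\g(H,E))$, so that $P\diamond\Phi(t)=\int_0^t P(t,s)\Phi(s)\,dW_H(s)$ is genuinely square-integrable for each $t$; the measurability of $s\mapsto P(t,s)\Phi(s)\in\g(H,E)$ on $(0,t)$ follows as in Theorem \ref{thm:facttype2} (with $\theta=0$). The goal is to bound $\E\int_0^T\|(-A_w(t))^{1/2}(P\diamond\Phi)(t)\|^2\,dt$. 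Using the Itô isometry \eqref{eq:type2estint} (which on a Hilbert space is an equality up to the $\g(H,E)=\mathcal C^2(H,E)$ identification), one gets, for fixed $t$,
\[
\E\|(-A_w(t))^{\frac12}(P\diamond\Phi)(t)\|^2 = \E\int_0^t \|(-A_w(t))^{\frac12}P(t,s)\Phi(s)\|_{\g(H,E)}^2\,ds,
\]
since $(-A_w(t))^{1/2}P(t,s)\in\calL(E)$ by \eqref{eq:2_15}. Integrating in $t$ and applying Fubini reduces the claim to the kernel estimate
\[
\int_0^T \int_0^t \|(-A_w(t))^{\frac12}P(t,s)\Phi(s)\|_{\g(H,E)}^2\,ds\,dt \lesssim \int_0^T \|\Phi(s)\|_{\g(H,E)}^2\,ds,
\]
which would yield \eqref{eq:maxreg} after taking expectations.

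So the heart of the matter is: for fixed $s$, estimate $\int_s^T\|(-A_w(t))^{1/2}P(t,s)R\|_{\g(H,E)}^2\,dt$ by $C\|R\|_{\g(H,E)}^2$ with $C$ independent of $s$ and $R\in\g(H,E)$. Here the autonomous intuition is $\int_s^\infty\|(-A_w)^{1/2}e^{(t-s)A_w}R\|^2\,dt\le C_2^2\|R\|^2$ by \eqref{eq:squarefuncest0} applied coordinatewise in an orthonormal basis of $H$ (Hilbert–Schmidt norm). For the non-autonomous case I would write, following the strategy of \cite{VeZi}, $P(t,s)=e^{(t-s)A_w(s)} + (P(t,s)-e^{(t-s)A_w(s)})$ and treat the two terms separately. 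For the principal term, $(-A_w(s))^{1/2}e^{(t-s)A_w(s)}$, the square function estimate \eqref{eq:squarefuncest0} for $A_w(s)$ — uniform in $s$ by ($H^\infty$) — gives exactly the bound $C_2^2\|R\|_{\g(H,E)}^2$; one also needs to commute $(-A_w(t))^{1/2}$ past $e^{(t-s)A_w(s)}$ at the price of the bounded operator $(-A_w(t))^{1/2}(-A_w(s))^{-1/2}$, whose uniform boundedness is a standard consequence of (AT1)–(AT2) (Kato–Tanabe / Acquistapace–Terreni estimates). For the remainder term $P(t,s)-e^{(t-s)A_w(s)}$, the classical parabolic estimates under (AT) give $\|(-A_w(t))^{1/2}(P(t,s)-e^{(t-s)A_w(s)})\|\lesssim (t-s)^{-1/2+\kappa_{\mu,\nu}}$ for small $\kappa_{\mu,\nu}=\mu+\nu-1>0$, which is square-integrable near $t=s$ and hence contributes a convergent integral, bounded by $C\|R\|_{\g(H,E)}^2$ using $\|R\|\le\|R\|_{\g(H,E)}$ together with the $\g$-ideal property (composition with a bounded operator on the left).

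The main obstacle I expect is the uniform-in-$s$ control of the square function integral for the frozen semigroups: one must invoke ($H^\infty$) not just as boundedness of the $H^\infty$-calculus for each $A_w(t)$ but with a constant $C$ uniform in $t$, and then translate that into a uniform constant $C_2$ in \eqref{eq:squarefuncest0} and its adjoint analogue — this uses that on a Hilbert space boundedness of the $H^\infty$-calculus is quantitatively equivalent to the two-sided square function estimate, with comparable constants, and that the equivalence constants depend only on $\sigma$ and the sectoriality data, which are uniform here by (AT1). A secondary technical point is justifying the Itô isometry and Fubini steps: the commutation $\lb P\diamond\Phi(t),x^*\rb$ with the stochastic integral, and the identity $(-A_w(t))^{1/2}\int_0^t\cdots\,dW_H = \int_0^t (-A_w(t))^{1/2}P(t,s)\Phi(s)\,dW_H(s)$, both follow from the fact (recalled at the end of Section \ref{sec:stochint}) that the stochastic integral commutes with bounded operators and with taking adjoints, applied to the fixed bounded operator $(-A_w(t))^{1/2}P(t,s)$; one then integrates the resulting scalar identities in $t$ and uses Tonelli to pass the expectation and the $t$-integral through.
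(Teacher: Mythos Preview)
Your overall architecture is exactly the paper's: localize to $\Phi\in L^2((0,T)\times\O;\g(H,E))$, apply the It\^o isometry and Fubini, and reduce everything to the pointwise kernel bound
\[
\int_s^T \|(-A_w(t))^{\frac12}P(t,s)x\|^2\,dt \lesssim \|x\|^2,
\]
uniformly in $s$, and then use that $\g(H,E)=\mathcal C^2(H,E)$ to pass to Hilbert--Schmidt norms. Up to here the plan is fine.

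The gap is in the decomposition you propose for this kernel bound. You freeze the semigroup at the \emph{initial} time $s$, write $(-A_w(t))^{\frac12}e^{(t-s)A_w(s)}=[(-A_w(t))^{\frac12}(-A_w(s))^{-\frac12}]\cdot[(-A_w(s))^{\frac12}e^{(t-s)A_w(s)}]$, and assert that the first factor is uniformly bounded ``as a standard consequence of (AT1)--(AT2)''. This is not true under (AT): the whole point of the Acquistapace--Terreni framework is that $D(A(t))$ may depend on $t$, and then so do the fractional domains $D((-A_w(t))^{\frac12})$. For instance, for second order elliptic operators with $t$-dependent boundary conditions one can have $D((-A_w(s))^{\frac12})\not\subset D((-A_w(t))^{\frac12})$, so the operator $(-A_w(t))^{\frac12}(-A_w(s))^{-\frac12}$ is unbounded. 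The same obstruction hits your remainder term: the bound $\|(-A_w(t))^{\frac12}(P(t,s)-e^{(t-s)A_w(s)})\|\lesssim(t-s)^{-\frac12+\kappa_{\mu,\nu}}$ is not available either, because the standard parabolic estimates compare $P_w(t,s)$ with the semigroup frozen at the \emph{terminal} time $t$, not at $s$.

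The fix the paper uses is to avoid this commutation entirely by a three-term splitting. One writes $P_w(t,s)=e^{(t-s)A_w(t)}+Z(t,s)$ with $Z(t,s)=P_w(t,s)-e^{(t-s)A_w(t)}$, and then further decomposes
\[
(-A_w(t))^{\frac12}e^{(t-s)A_w(t)}x
=(-A_w(s))^{\frac12}e^{(t-s)A_w(s)}x
+\big[(-A_w(t))^{\frac12}e^{(t-s)A_w(t)}x-(-A_w(s))^{\frac12}e^{(t-s)A_w(s)}x\big].
\]
Yagi's estimate \cite[(2.4)]{Ya90} controls the bracketed difference by $C(t-s)^{\kappa_{\mu,\nu}-\frac12}\|x\|$, which is square-integrable near $t=s$; the piece $(-A_w(t))^{\frac12}Z(t,s)$ is handled by the Acquistapace--Terreni/Amann estimate $\|(-A_w(t))^{\frac12}Z(t,s)\|\lesssim(t-s)^{-\frac12+\kappa_{\mu,\nu}/2}$; and the remaining piece $(-A_w(s))^{\frac12}e^{(t-s)A_w(s)}x$ is exactly where the $H^\infty$ square function estimate \eqref{eq:squarefuncest0} applies, with constant uniform in $s$ by ($H^\infty$). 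The point is that Yagi's estimate compares the \emph{full} expressions $(-A_w(\cdot))^{\frac12}e^{(t-s)A_w(\cdot)}$ at times $t$ and $s$, so one never needs to factor out $(-A_w(t))^{\frac12}(-A_w(s))^{-\frac12}$.
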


For second order partial differential equations the inequality
\eqref{eq:maxreg} will allow us to derive $W^{1,2}(S)$-regularity,
where $W^{1,2}(S)$ denotes the Sobolev space on a domain $S\subset
\R^n$. Furthermore, \eqref{eq:maxreg} can be useful for
non-linear equations.

\begin{proof}


First assume that $\Phi\in L^2(\O;L^2(0,T;\g(H,E)))$. Notice that
$\g(H,E) = \mathcal{C}_2(H,E)$ is the space of Hilbert-Schmidt
operators from $H$ into $E$. Let $(h_n)_{n\geq 1}$ be an orthonormal
basis for $H$. By the It\^o isometry and the Fubini theorem, we have
\begin{align*}
\E\|t\mapsto (-A_w(t))^{\frac12}&(P\diamond
\Phi)(t)\|_{L^2(0,T;E)}^2
\\ &= \E \int_0^T \int_0^t \|(-A_w(t))^{\frac12} P(t,s)
\Phi(s)\|_{\g(H,E)}^2 \, ds \, dt \\ & = \E \int_0^T \sum_{n\geq
1}\int_s^T \|(-A_w(t))^{\frac12} P(t,s) \Phi(s) h_n \|^2 \, dt \,
ds.
\end{align*}

Let $P_w(t,s) = e^{w(t-s)}P(t,s)$. For $x\in E$ we can estimate
\begin{align*}
\Big(\int_s^T \|(-A_w(t))^{\frac12} P_w(t,s)
x\|^2 \, dt\Big)^{\frac12} \leq \sum_{i=1}^3 R_i.
\end{align*}
Here
\[
R_1^2 = \int_s^T \|(-A_w(t))^{\frac12} Z(t,s) x\|^2 \,
dt
\]
with $Z(t,s) = P_w(t,s) - \exp((t-s)A_w(t))$. It follows from
\cite[p.\ 144]{Ya90} and \cite[Lemma 3.2.1 and Theorem 3.2.2]{Am}
that
\[
\|(-A_w(t))^{\frac12} Z(t,s)\| \leq C_4
(t-s)^{-\frac12+\frac{\kappa_{\mu,\nu}}{2}}.
\]
Therefore, $R_1^2 \lesssim T^{\kappa_{\mu,\nu}} \|x\|$. Secondly, by
\cite[(2.4)]{Ya90}
\begin{align*}
R_2^2 &= \int_s^T \|(-A_w(t))^{\frac12} \exp((t-s) A_w(t)) x -
(-A_w(s))^{\frac12} \exp((t-s) A_w(s))x\|^2 \,dt
\\ & \lesssim \int_s^T (t-s)^{2\kappa_{\mu,\nu}-1}\, dt \|x\|
\lesssim T^{2\kappa_{\mu,\nu}}\|x\|.
\end{align*}
Finally, by ($H^\infty$) and \eqref{eq:squarefuncest0}
\[R_3^2 = \int_s^T \|(-A_w(s))^{\frac12} \exp((t-s) A_w(s)) x\|^2 \,
dt\lesssim \|x\|.\] It follows that

\begin{equation}\label{eq:resVx}
\Big(\int_s^T \|(-A_w(t))^{\frac12} P(t,s) x\|^2 \, dt\Big)^{\frac12}\lesssim
\|x\|.
\end{equation}
We may conclude that
\[\E\|(-A_w(\cdot))^{\frac12} P\diamond \Phi\|_{L^2(0,T;E)}^2\lesssim \E \int_0^T \sum_{n\geq 1} \|\Phi(s) h_n
\|^2 \, ds =  \E \|\Phi\|^2_{L^2(0,T;\g(H,E))}.
\]
This proves \eqref{eq:maxreg}.
The general result now follows from a localization argument.
\end{proof}

\section{The abstract evolution equation and solution concepts\label{sec:abstracteq}}
In this section and Section \ref{sec:Lipcoefinttype} let $E$, $H$,
$(\O,\F,\P)$, $(\F_t)_{t\in[0,T]}$ and $W_H$ be as in Section
\ref{sec:stochconv}. On the Banach space $E$ we consider the problem
\begin{equation}\tag{SE}\label{SEtype}
\left\{\begin{aligned}
dU(t) & = (A(t)U(t) + F(t,U(t)))\,dt + B(t,U(t))\,dW_H(t), \qquad t\in [0,T],\\
 U(0) & = u_0.
\end{aligned}
\right.
\end{equation}
Here $(A(t))_{t\in [0,T]}$ is a family of closed unbounded operators on $E$.
The initial value is a strongly $\F_0$-measurable mapping $u_0:\O\to E$.




We assume (AT) and \ref{as:isom}.  We assume $a\in [0,\eta_0)$ and for each $(t,\omega)\in [0,T]\times \O$, we assume that $F$ and $B$ map as follows
\[x\mapsto F(t,\omega,x) \ \text{maps from $\tE_{a}^t$ into $E_{-1}^t$},\]
\[x \mapsto B(t,\omega,x) \ \text{maps from $\tE_{a}^t$ into $\g(H,E_{-1}^t)$}.\]
More precisely, we have the following hypothesis on $F$ and $B$.

\let\ALTERWERTA\theenumi
\let\ALTERWERTB\labelenumi
\def\theenumi{(H2)}
\def\labelenumi{(H2)}
\begin{enumerate}
\item \label{as:LipschitzFtype} Let $a\in [0,\eta_0)$ and $\theta_F\in [0,\mu)$
be such that $a+\theta_F <1$. For all $x\in \tE_a$, $(t,
\omega)\mapsto (-A_w(t))^{-\theta_F} F(t, \omega,x)\in E$ is
strongly measurable and adapted. The function $(-A_w(t))^{-\theta_F}
F$ has linear growth and is Lipschitz continuous in space uniformly
in $[0,T]\times\O$, that is there are constants $L_F$ and $C_F$ such
that for all $t\in [0,T], \omega\in \O, x,y\in \tE_{a}$,
\begin{eqnarray}\label{eq:LipschitzF}
\|(-A_w(t))^{-\theta_F} (F(t,\omega,x)-F(t,\omega,y))\|_{E}&\leq &
L_F\|x-y\|_{\tE_{a}},
\\ \label{eq:linF}
\|(-A_w(t))^{-\theta_F} F(t,\omega,x)\|_{E}&\leq & C_F(1+\|x\|_{\tE_{a}}).
\end{eqnarray}
\end{enumerate}
\let\theenumi\ALTERWERTA
\let\labelenumi\ALTERWERTB

\let\ALTERWERTA\theenumi
\let\ALTERWERTB\labelenumi
\def\theenumi{(H3)}
\def\labelenumi{(H3)}
\begin{enumerate}
\item \label{as:LipschitzBtype} Let $a\in [0,\eta_0)$ and $\theta_B\in [0,\mu)$
be such that $a+\theta_B <\frac12$. For all $x\in \tE_a$, $(t, \omega)\mapsto
(-A_w(t))^{-\theta_B} B(t, \omega,x)\in \g(H,E)$ is strongly measurable and adapted.
The function $(-A_w)^{-\theta_B}B$ has linear growth and is Lipschitz
continuous in space uniformly in $[0,T]\times\O$, that is there are constants
$L_B$ and $C_B$ such that for all $t\in [0,T], \omega\in \O, x,y\in \tE_{a}$,
\begin{eqnarray}\label{eq:LipschitzB}
\|(-A_w(t))^{-\theta_B}(B(t,\omega,x)-B(t,\omega,y))\|_{\g(H,E)}&\leq &
L_B\|x-y\|_{\tE_{a}},
\\ \label{eq:linB}
\|(-A_w(t))^{-\theta_B}B(t,\omega,x)\|_{\g(H,E)}&\leq & C_B(1+\|x\|_{\tE_{a}}).
\end{eqnarray}
\end{enumerate}
\let\theenumi\ALTERWERTA
\let\labelenumi\ALTERWERTB

In our application in Section \ref{sec:SSV} we will not use
functions $F$ which take values in extrapolation spaces. However, in
forthcoming papers this will be important. In Section
\ref{sec:local} we will consider locally Lipschitz coefficients $F$
and $B$.



We introduce variational and mild solutions for \eqref{SEtype} and give
conditions under which both concepts are equivalent.

We need the adjoint operators $A(t)^*$. Note that these also satisfy (AT1). Since in our setting $E$ will be a UMD space with type $2$, it will also be reflexive. Therefore, Kato's result implies that also $A(t)^*$ is densely defined (cf.\ \cite[Section VIII.4]{Yos}).

For $t\in [0,T]$ let
\begin{align*}
\Gamma_t = \big\{ \varphi \in C^1([0,t];E^*) \,:
&\text{ for all } s \in [0,t] \ \varphi(s)\in D(A(s)^*) \\ & \text{
and } s\mapsto A(s)^*\varphi(s) \in C([0,t];E^*) \big\}.
\end{align*}
Fix some $t\in [0,T]$ and $\varphi\in \Gamma_t$. Formally, applying the It\^o
formula to $\lb U(t),\varphi(t)\rb$ yields
\begin{equation}\label{varsol}
\begin{aligned}
\lb U(t), &\varphi(t)\rb - \lb u_0, \varphi(0)\rb \\ & =  \int_0^t
\lb U(s), \varphi'(s) \rb \, ds+ \int_0^t \lb U(s), A(s)^*
\varphi(s) \rb + \lb F(s,U(s)), \varphi(s) \rb \, ds \\ & \qquad  +
\int_0^t B(s,U(s))^{*} \varphi(s) \, d W_H(s).
\end{aligned}
\end{equation}

\begin{definition}\label{def:var}
Assume (AT), \ref{as:isom}, \ref{as:LipschitzFtype} and
\ref{as:LipschitzBtype}. An $\tE_a$-valued process $(U(t))_{t\in [0,T]}$ is
called a {\em variational solution} of \eqref{SEtype}, if
\renewcommand{\labelenumi}{(\roman{enumi})}
\renewcommand{\theenumi}{(\roman{enumi})}
\begin{enumerate}
\item\label{en:var1} $U$ is strongly measurable and adapted, and in $L^2(0,T;\tE_a)$ a.s.

\item\label{en:var4} for all $t\in [0,T]$ and all $\varphi\in \Gamma_t$, almost
surely, \eqref{varsol} holds.
\end{enumerate}
\renewcommand{\labelenumi}{(\arabic{enumi})}
\renewcommand{\theenumi}{(\arabic{enumi})}
\end{definition}

The integrand $B(s,U(s))^{*} \varphi(s)$ of the stochastic integral in
\eqref{varsol} should be read as
\[((-A_w(s))^{-\theta_B} B(s,U(s)))^{*} (-A_w(s)^*)^{\theta_B}\varphi(s).\]
It follows from \ref{as:LipschitzBtype} that $s\mapsto ((-A_w(s))^{-\theta_B}
B(s,U(s)))^{*}$ is strongly measurable and adapted and in
$L^2(0,T;\calL(E^*,H^*))$ a.s. Moreover,
\[s\mapsto (-A_w(s)^*)^{\theta_B}\varphi(s) = (-A_w(s)^*)^{-1+\theta_B} (-A_w(s)^*)
\varphi(s)\] is in $C([0,t];E^*)$ by the H\"older continuity of
$(-A_w(s))^{-1+\theta_B}$ (cf.\ \cite[(2.10) and (2.11)]{Schn}) and its adjoint
and the assumption on $\varphi$. The integrand $\lb F(s,U(s)), \varphi(s) \rb$
has to be interpreted in a similar way.


Next we define a mild solution.

\begin{definition}\label{def:mild}
Assume (AT), \ref{as:isom}, \ref{as:LipschitzFtype} and
\ref{as:LipschitzBtype}. Let $r\in (2,\infty)$ be such that
$\theta_F<1-\frac1r$ and $\theta_B<\frac12-\frac1r$. We call an $\tE_a$-valued
process $(U(t))_{t\in [0,T]}$ a {\em mild
solution} of \eqref{SEtype}, if
\renewcommand{\labelenumi}{(\roman{enumi})}
\renewcommand{\theenumi}{(\roman{enumi})}
\begin{enumerate}
\item $U$ is strongly measurable and adapted, and in $L^r(0,T;\tE_a)$ a.s.

\item for all $t\in [0,T]$, a.s.
\[U(t) = P(t,0) u_0 + P*F(\cdot,U)(t) + P\diamond B(\cdot,U)(t) \ \text{in $E$}.\]
\end{enumerate}
\renewcommand{\labelenumi}{(\arabic{enumi})}
\renewcommand{\theenumi}{(\arabic{enumi})}
\end{definition}
Recall that $P*F(\cdot,U)$ and $P\diamond B(\cdot,U)$ stand for the
convolution and stochastic convolution as defined in Sections
\ref{sec:detconv} and \ref{sec:stochconv} respectively.

The stochastic convolution is well-defined. This follows if we write
\[P(t,s) B(s,U(s)) = P(t,s)(-A_w(s))^{\theta_B} (-A_w(s))^{-\theta_B} B(s,U(s))\]
and therefore by \eqref{eq:2_15} and H\"older's inequality
\begin{align*}
\int_0^t \|P(t,s) B(s,U(s))\|^2 \, ds & \lesssim \int_0^t
(t-s)^{-2\theta_B} \|(-A_w(s))^{-\theta_B} B(s,U(s))\|^2 \, ds\\ &
\lesssim 1+\|U\|_{L^r(0,T;\tE_a)}^2.
\end{align*}
In the same way one can see that the deterministic convolution is well-defined.
If $\theta_F=\theta_B=0$, then one may also take $r=2$ in Definition
\ref{def:mild}.

To prove equivalences between variational and mild solutions, we need the
following condition.

\let\ALTERWERTA\theenumi
\let\ALTERWERTB\labelenumi
\def\theenumi{(W)}
\def\labelenumi{(W)}
\begin{enumerate}
\item\label{eq:condW} Assume that for all $t\in [0,T]$, there is a $\sigma(E^*,
E)$-sequentially dense subspace $\Upsilon_t$\label{p:Gammat} of $E^*$
such that for all $x^*\in \Upsilon_t$, we have $\varphi(s) := P(t,s)^*
x^*$ is in $C^1([0,t];E^*)$ and $\varphi(s)\in D(A(s)^*)$ for all
$s\in [0,t]$ and
\begin{equation}\label{varphiMinA}
 \frac{d}{ds} \varphi(s) = -A(s)^* \varphi(s).
\end{equation}
\end{enumerate}
\let\theenumi\ALTERWERTA
\let\labelenumi\ALTERWERTB

The condition \ref{eq:condW} was introduced in \cite{VeZi} in order to relate
different solution concepts in the case of \eqref{SEtype} with additive noise.

\begin{remark}\label{rem:condW}
If (AT) holds for both for $A(t)$ and its adjoint, then
\ref{eq:condW} is fulfilled with $\Upsilon_t = D((A(t)^*)^2)$. This follows
from \cite[Theorem 6.1]{AT2}) and \cite[p. 1176]{AT3}. If $E$ is
reflexive, by Kato's result \cite{Kato}, one may take $\Upsilon_t =
D(A(t)^*)$.
\end{remark}

\begin{proposition}\label{prop:varmild}
Assume (AT), \ref{as:isom}, \ref{as:LipschitzFtype},
\ref{as:LipschitzBtype} and \ref{eq:condW}. Let $r\in (2,\infty)$ be
such that $\theta_B<\frac12-\frac1r$ and $\theta_F<1-\frac1r$. Let
$U:[0,T]\times\O\to \tE_a$ be strongly measurable and adapted and such
that $U\in L^r(0,T;\tE_a)$ a.s. The following assertions are
equivalent:
\begin{enumerate}
\item $E$ is a mild solution of \eqref{SEtype}.

\item $U$ is a variational solution of \eqref{SEtype}.
\end{enumerate}
\end{proposition}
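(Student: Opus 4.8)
The plan is to prove the equivalence by moving between the two formulations through the pairing $\langle U(t),\varphi(t)\rangle$ with $\varphi$ of the special form $\varphi(s)=P(t,s)^*x^*$ coming from \ref{eq:condW}, and then extending to all of $\Gamma_t$ by density. The main structural idea is that the mild-solution identity is exactly what one obtains by pairing the variational identity against these particular test functions, and that \ref{eq:condW} guarantees there are enough of them.

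For the direction $(1)\Rightarrow(2)$, I would fix $t\in[0,T]$ and $\varphi\in\Gamma_t$. First I would insert the mild formula $U(s)=P(s,0)u_0+(P*F(\cdot,U))(s)+(P\diamond B(\cdot,U))(s)$ into the right-hand side of \eqref{varsol} and check that all three terms reproduce, after using the evolution-family law $P(t,r)=P(t,s)P(s,r)$ and the (stochastic) Fubini theorem, the corresponding pieces of $\langle U(t),\varphi(t)\rangle-\langle u_0,\varphi(0)\rangle$. Concretely: $\int_0^t\langle U(s),\varphi'(s)+A(s)^*\varphi(s)\rangle\,ds$ applied to $P(s,0)u_0$ uses that $s\mapsto P(s,0)u_0$ is a classical solution of \eqref{nCP} on $(0,t]$ (Theorem \ref{thm:exist-parab}) together with integration by parts; applied to the deterministic convolution it uses the same together with Fubini in the $(r,s)$ variables; and applied to the stochastic convolution it uses the stochastic Fubini theorem (as in the proof of Theorem \ref{thm:facttype2}, see \cite{DPKZ}). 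The terms $\langle F(s,U(s)),\varphi(s)\rangle$ and $B(s,U(s))^*\varphi(s)$ on the right of \eqref{varsol} then match the boundary contributions of these Fubini computations. One has to be careful that $\varphi\in\Gamma_t$ need not be of the form $P(t,\cdot)^*x^*$, so rather than paning through the general $\Gamma_t$ directly it is cleaner to first establish the identity for $x^*\in\Upsilon_t$, i.e. $\varphi=P(t,\cdot)^*x^*$, where $\varphi'(s)+A(s)^*\varphi(s)=0$ by \eqref{varphiMinA} and the computation collapses, and then pass to general $\varphi$ by a duality/density argument.

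For the direction $(2)\Rightarrow(1)$, I would again fix $t$ and take $x^*\in\Upsilon_t$, so that $\varphi(s)=P(t,s)^*x^*$ lies in $\Gamma_t$ and satisfies $\varphi'(s)=-A(s)^*\varphi(s)$. Plugging this into \eqref{varsol} kills the $\int_0^t\langle U(s),\varphi'(s)+A(s)^*\varphi(s)\rangle\,ds$ term entirely, leaving
\[
\langle U(t),x^*\rangle = \langle u_0,P(t,0)^*x^*\rangle + \int_0^t\langle F(s,U(s)),P(t,s)^*x^*\rangle\,ds + \int_0^t (P(t,s)B(s,U(s)))^*x^*\,dW_H(s),
\]
which, reading the adjoints back (and using the basic duality fact for stochastic integrals recalled after Proposition \ref{prop:stochinttype2}), says precisely $\langle U(t),x^*\rangle=\langle P(t,0)u_0+(P*F(\cdot,U))(t)+(P\diamond B(\cdot,U))(t),x^*\rangle$ almost surely. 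Since $\Upsilon_t$ is $\sigma(E^*,E)$-sequentially dense in $E^*$, and both sides are (fixed) elements of $E$, this forces the mild identity in $E$ a.s.; one only needs to handle the null sets, which is standard since it suffices to test against a countable subset. The integrability requirement $U\in L^r(0,T;\tE_a)$ is the same on both sides and needs no separate argument.

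**Main obstacle.** The genuinely delicate point is the interchange of integrals — the ordinary Fubini theorem for the $F$-term and, above all, the stochastic Fubini theorem for the $B$-term — carried out rigorously in the extrapolation-space setting where $F$ and $B$ take values in $E_{-1}^t$ and one only controls $(-A_w(\cdot))^{-\theta_F}F$ and $(-A_w(\cdot))^{-\theta_B}B$. One must verify the integrability hypotheses of stochastic Fubini using \eqref{eq:2_15} (the estimate $\|(-A_w(t))^{\theta}P(t,s)\|\lesssim(t-s)^{-\theta}$) together with $\theta_B<\tfrac12-\tfrac1r$ and $\theta_F<1-\tfrac1r$, exactly as in the well-definedness discussion after Definition \ref{def:mild}; this is where the restrictions on $r$ are used. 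A secondary subtlety is the correct interpretation of the pairings $\langle F(s,U(s)),\varphi(s)\rangle$ and $B(s,U(s))^*\varphi(s)$ as $\langle(-A_w(s))^{-\theta_F}F(s,U(s)),(-A_w(s)^*)^{\theta_F}\varphi(s)\rangle$ and similarly for $B$, using that $s\mapsto(-A_w(s)^*)^{\theta}\varphi(s)$ is continuous — but this has essentially already been recorded in the remarks following Definition \ref{def:var}, so it can be invoked rather than reproved.
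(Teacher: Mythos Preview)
Your plan for $(2)\Rightarrow(1)$ is exactly the paper's argument: test against $\varphi(s)=P(t,s)^*x^*$ for $x^*\in\Upsilon_t$, use \eqref{varphiMinA} to kill the $\varphi'+A^*\varphi$ term, and conclude by weak$^*$-sequential density.

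For $(1)\Rightarrow(2)$, your first outline is also essentially what the paper does: substitute the mild formula for $U(s)$ into $\int_0^t\langle U(s),A(s)^*\varphi(s)\rangle\,ds$, then use Fubini and stochastic Fubini together with the integration-by-parts identity
\[
\langle P(t,r)x,\varphi(t)\rangle-\langle x,\varphi(r)\rangle=\int_r^t\langle P(s,r)x,A(s)^*\varphi(s)+\varphi'(s)\rangle\,ds
\]
(and its fractional-power version for the $F$ and $B$ terms), which holds for \emph{every} $\varphi\in\Gamma_t$ because $P(\cdot,r)x$ is a classical solution. This is the right route and your discussion of the main obstacle (stochastic Fubini in the extrapolation setting, using \eqref{eq:2_15} and the restrictions on $r$) is on target.

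However, the ``cleaner'' shortcut you propose---first establishing \eqref{varsol} only for $\varphi=P(t,\cdot)^*x^*$ with $x^*\in\Upsilon_t$, then passing to general $\varphi\in\Gamma_t$ by a density argument---does not work. There is no topology on $\Gamma_t$ in which the family $\{P(t,\cdot)^*x^*:x^*\in\Upsilon_t\}$ is dense; these are very particular test functions (they satisfy $\varphi'+A^*\varphi=0$), whereas the definition of variational solution requires \eqref{varsol} for \emph{all} $\varphi\in\Gamma_t$, in particular for $\varphi$ with arbitrary $\varphi'$. For the special $\varphi$ the identity collapses to mildness, so verifying it there is essentially tautological and gives no leverage on the general case. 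The paper in fact notes explicitly that condition \ref{eq:condW} is \emph{only} needed for $(2)\Rightarrow(1)$; the implication $(1)\Rightarrow(2)$ is proved for arbitrary $\varphi\in\Gamma_t$ directly via the integration-by-parts identity above. So drop the shortcut and carry out the direct computation you outlined first.
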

Condition \ref{eq:condW} is only needed in $(2)\Rightarrow (1)$. If
$\theta_F=\theta_B=0$, then one may also take $r=2$ in Proposition
\ref{prop:varmild}. For the proof of the above equivalence we refer to the
appendix.

\section{Existence, uniqueness and regularity\label{sec:Lipcoefinttype}}

%


Assume (AT) and \ref{as:isom}. For $a\in[0,\eta_0)$ and $r\in [1, \infty)$ let
$Z_{a}^r$ be the closed subspace of adapted processes in
$C([0,T];L^r(\O;\tE_{a}))$. Assume \ref{as:LipschitzFtype} and
\ref{as:LipschitzBtype}, where $a\in[0,\eta_0)$.

Define the fixed point operator $L:Z_a^r \to Z_a^r$ as
\[L (\phi)= t\mapsto P(t,0) u_0 + P*F(\cdot,\phi)(t) + P\diamond B(\cdot,\phi)(t).\]
In the next lemma we show that $L$ is well-defined and that it is a strict
contraction in $Z_{a}^r$ for a suitable equivalent norm. Recall that
$P*F(\cdot,\phi)$ and $P\diamond B(\cdot,\phi)$ stand for the convolution and
stochastic convolution as defined in Sections \ref{sec:detconv} and
\ref{sec:stochconv} respectively.

\begin{lemma}\label{lem:contrtype}
Assume (AT), \ref{as:isom}, \ref{as:LipschitzFtype} and
\ref{as:LipschitzBtype}. Let $r\in (2, \infty)$ be such that
$a+\frac1r<\min\{\frac12-\theta_B,1-\theta_F,\eta_0\}$ and let
$u_0\in L^r(\O,\F_0;E_{a}^0)$. Then the operator $L$ is well-defined
and there is an equivalent norm $\dbn\cdot\dbn$ on $Z_a^r$ such that
for all $\phi_1, \phi_2\in Z_a^r$,
\begin{equation}\label{eq:fixpointesttype}
\dbn L(\phi_1) - L(\phi_2)\dbn_{Z_a^r} \leq \frac12 \dbn \phi_1 -
\phi_2\dbn_{Z_a^r}.
\end{equation}
Moreover, there is a constant $C$ independent of $u_0$ such that for all
$\phi\in Z_a^r$,
\begin{equation}\label{eq:operatoresttype}
\dbn L(\phi)\dbn_{Z_a^r}\leq C(1+(\E\|u_0\|_{E_{a}^0}^r)^{\frac1r}) +
\frac12\dbn \phi\dbn_{Z_a^r}.
\end{equation}
\end{lemma}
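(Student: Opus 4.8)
The plan is to verify the three claims of the lemma --- well-definedness of $L$ on $Z_a^r$, the strict contraction estimate \eqref{eq:fixpointesttype}, and the growth estimate \eqref{eq:operatoresttype} --- by treating the three terms of $L(\phi)$ separately and invoking the convolution estimates from Sections \ref{sec:detconv} and \ref{sec:stochconv}. First I would record the mapping property of the initial-value term: by \eqref{eq:2_13} with $\alpha = \beta = a$ we have $\|P(t,0)u_0\|_{\tE_a} \lesssim \|P(t,0)u_0\|_{E_a^t} \lesssim \|u_0\|_{E_a^0}$ uniformly in $t$, so $t\mapsto P(t,0)u_0$ lies in $C([0,T];L^r(\O;\tE_a))$ (continuity in $t$ coming from Lemma \ref{lem:initialcons}, since $a<\eta_0$), and its $Z_a^r$-norm is bounded by $C(\E\|u_0\|_{E_a^0}^r)^{1/r}$. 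This term is independent of $\phi$, so it contributes only to \eqref{eq:operatoresttype}.

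Next I would handle the deterministic convolution term. For $\phi\in Z_a^r$, Hypothesis \ref{as:LipschitzFtype} gives that $s\mapsto (-A_w(s))^{-\theta_F}F(s,\phi(s))$ is strongly measurable, adapted, and (by the linear growth bound \eqref{eq:linF}) satisfies $\|(-A_w(\cdot))^{-\theta_F}F(\cdot,\phi(\cdot))\|_{L^r(0,T;E)} \lesssim 1 + \|\phi\|_{L^r(0,T;\tE_a)}$ pathwise, with $L^r(\O)$-control because $\phi\in Z_a^r$. Since $a + \tfrac1r < \min\{1-\theta_F,\eta_0\}$, Proposition \ref{prop:detconv} (applied with $\delta = a$, a small $\lambda>0$ with $\lambda + a + \tfrac1r < \min\{1-\theta_F,\eta_0\}$, $\theta=\theta_F$) yields $P*F(\cdot,\phi)\in C^\lambda([0,T];\tE_a)$ pathwise with $\|P*F(\cdot,\phi)\|_{C([0,T];\tE_a)} \leq C_T(1+\|\phi\|_{L^r(0,T;\tE_a)})$ where $C_T\to 0$ as $T\downarrow 0$; taking $L^r(\O)$-norms and using the Lipschitz bound \eqref{eq:LipschitzF} for the difference $F(\cdot,\phi_1)-F(\cdot,\phi_2)$ gives both a contribution $\leq C_T \dbn\phi_1-\phi_2\dbn_{Z_a^r}$ toward \eqref{eq:fixpointesttype} and a contribution $\leq C_T(1+\dbn\phi\dbn_{Z_a^r})$ toward \eqref{eq:operatoresttype}. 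The stochastic convolution term is treated the same way: \ref{as:LipschitzBtype} and $a+\tfrac1r < \min\{\tfrac12-\theta_B,\eta_0\}$ put us in the scope of Theorem \ref{thm:facttype2} (with $\delta=a$, small $\lambda$, $\theta=\theta_B$), giving $P\diamond B(\cdot,\phi)\in C^\lambda([0,T];\tE_a)$ and, after taking $L^r(\O)$-norms and using \eqref{eq:LipschitzB}, \eqref{eq:linB}, estimates of the same shape with a constant $C_T'\to 0$ as $T\downarrow 0$. In particular all three terms land in $C([0,T];L^r(\O;\tE_a))$; adaptedness is inherited from $\phi$, so $L$ maps $Z_a^r$ into itself.

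At this stage the naive combination gives $\dbn L(\phi_1)-L(\phi_2)\dbn_{Z_a^r} \leq (C_T + C_T')\dbn\phi_1-\phi_2\dbn_{Z_a^r}$, which is a strict contraction only for $T$ small. To obtain \eqref{eq:fixpointesttype} for arbitrary fixed $T$, I would pass to an equivalent weighted norm on $Z_a^r$ --- this is the step I expect to require the most care. Concretely, one introduces $\dbn\phi\dbn_{Z_a^r} := \sup_{t\in[0,T]} e^{-\beta t}(\E\|\phi(t)\|_{\tE_a}^r)^{1/r}$ for a sufficiently large parameter $\beta>0$, or equivalently works on small subintervals and concatenates. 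The point is that in the convolution estimates the kernels carry factors $(t-s)^{\alpha-1-\theta}$ (deterministic) and, after the factorization, $(t-s)^{-\alpha}$ together with an $L^{p}$-in-time integration, so inserting the weight $e^{-\beta(t-s)}$ produces a gain of order $\beta^{-(\alpha-\theta-1/r)}$ (resp.\ a power of $\beta^{-1}$ from the beta-function integral) which can be made $\leq \tfrac12$ by choosing $\beta$ large; the initial-value term is unaffected by this reweighting up to an equivalence constant. One has to check that the factorization argument of Theorem \ref{thm:facttype2} and Proposition \ref{prop:detconv} is compatible with the exponential weight --- this is routine since the weight is bounded on $[0,T]$ and only helps in the convolution integrals --- and then \eqref{eq:fixpointesttype} and \eqref{eq:operatoresttype} follow with the constant $\tfrac12$ as stated, the constant $C$ in \eqref{eq:operatoresttype} absorbing the ($\phi$-independent) initial-value and "$1+$" contributions. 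The main obstacle is thus purely the bookkeeping of how the weight interacts with the singular convolution kernels and with the Young/stochastic-maximal-inequality estimates; everything else is a direct application of the results already established in Sections \ref{sec:detconv}--\ref{sec:stochconv}.
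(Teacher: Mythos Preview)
Your proposal is correct and follows essentially the same approach as the paper. The paper carries out explicitly the step you flag as needing care: rather than invoking Proposition~\ref{prop:detconv} and Theorem~\ref{thm:facttype2} as black boxes and then reopening them for the weight, it first derives pointwise-in-$t$ kernel estimates of the form $\|P*\phi(t)\|_{L^r(\Omega;\tE_a)} \lesssim \int_0^t (t-u)^{-a-\theta_F}\|(-A_w(u))^{-\theta_F}\phi(u)\|_{L^r(\Omega;E)}\,du$ (and a two-layer analogue for the stochastic convolution via $R_\alpha\zeta_\alpha$), so that after multiplying by $e^{-pt}$ and taking the supremum one is left with integrals like $\int_0^T e^{-ps}s^{-a-\theta_F}\,ds$ which tend to $0$ as $p\to\infty$.
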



\begin{proof}

{\em Initial value part} --

By \eqref{eq:2_13} we may estimate
\[\|P(t,0) u_0\|_{E_{a}^t} \leq C\|u_0\|_{E_{a}^0}.\]
This clearly implies
\begin{equation}\label{eq:inittype2}
\|t\mapsto P(t,0) u_0\|_{Z_a^r} \lesssim \|u_0\|_{L^r(\O;E_a^0)},
\end{equation}
where the path continuity of $P(t,0) u_0$ in $\tE_a$ follows from Lemma
\ref{lem:initialcons}.

{\em Deterministic convolution} --

(a): \ Let $(-A_w)^{-\theta_F}\phi\in L^\infty(0,T;L^r(\O;E))$. Recall from the
proof of Proposition \ref{prop:detconv} that $P* \phi = \zeta =
R_{\alpha}(\zeta_\alpha)$. It follows from \eqref{eq:2_13} that for all $t\in
[0,T]$,
\begin{equation}\label{eq:conva}
\begin{aligned}
\|P*\phi(t)\|_{L^r(\O;\tE_a)} &=
\|R_{\alpha}(\zeta_\alpha)(t)\|_{L^r(\O;\tE_a)}
\\ & \lesssim \int_0^t (t-s)^{\alpha-1-a} \|\zeta_\alpha(s)\|_{L^r(\O;E)} \,
ds.
\end{aligned}
\end{equation}
By \eqref{eq:2_15} we obtain that
\begin{align*}
\|\zeta_\alpha(s)\|_{L^r(\O;E)} &\lesssim \Big\|\int_0^s (s-u)^{-\alpha}
\|P(s,u) \phi(u)\| \, du\Big\|_{L^{r}(\O)}
\\ & \lesssim \Big\|\int_0^s (s-u)^{-\alpha-\theta_F} \|(-A_w(u))^{-\theta_F} \phi(u)\| \, du\Big\|_{L^{r}(\O)}
\\ & \leq \int_0^s (s-u)^{-\alpha-\theta_F} \|(-A_w(u))^{-\theta_F} \phi(u)\|_{L^r(\O;E)} \,
du.
\end{align*}
If we combine this with \eqref{eq:conva} we obtain that for all $t\in [0,T]$
{\small
\begin{equation}\label{eq:convha2}
\begin{aligned}
\|&P\diamond \phi(t)\|_{L^r(\O;\tE_a)} \\ & \lesssim \int_0^t
(t-s)^{\alpha-1-a} \int_0^s (s-u)^{-\alpha-\theta_F} \|(-A_w(u))^{-\theta_F}
\phi(u)\|_{L^r(\O;E)} \, du \, ds
\\ & \eqsim \int_0^t (t-u)^{-a-\theta_F} \|(-A_w(u))^{-\theta_F}
\phi(u)\|_{L^r(\O;E)} \, du,
\end{aligned}
\end{equation}}
where in the last step we used Fubini's theorem and $\int_0^1
s^{-\alpha-\theta_F} (1-s)^{\alpha-1-a} \, ds$ is finite. Note that $P\diamond
\phi\in Z_a^r$ follows from the fact that $P\diamond \phi$ is also
(H\"older)-continuous by Proposition \ref{prop:detconv}.

(b): \ Let $\phi_1, \phi_2\in Z_a^r$. Then by \ref{as:LipschitzFtype},
$(-A_w)^{-\theta_F} F(\cdot, \phi_1)$ and $(-A_w)^{-\theta_F} F(\cdot, \phi_2)$
are adapted and in $L^\infty(0,T;L^r(\O;E))$ and by (a), $P*F(\cdot,\phi_1)$
and $P*F(\cdot,\phi_2)$ define an element of $Z_{a}^r$ and
{\small
\begin{equation}\label{eq:dettype21b2}
\begin{aligned}
\|&P* F(\cdot, \phi_1)(t) - P*F(\cdot, \phi_2)(t)\|_{L^r(\O;\tE_{a})}
\\ &\lesssim \int_0^t (t-s)^{-a} \|(-A_w(s))^{-\theta_F} F(s,\phi_1(s))-(-A_w(s))^{-\theta_F} F(s, \phi_2(s))\|_{L^r(\O;E)} \, ds \\ &\leq L_F
\int_0^t (t-s)^{-a-\theta_F} \|\phi_1(s)-\phi_2(s)\|_{L^r(\O;\tE_a)} \, ds.
\end{aligned}
\end{equation}}

{\em Stochastic convolution} --

(a): \ Let $(-A_w)^{-\theta_B}\Phi\in L^\infty(0,T;L^r(\O;\g(H,E)))$ be
adapted. Recall from the proof of Theorem \ref{thm:facttype2} that $P\diamond
\Phi = \zeta = R_{\alpha}(\zeta_\alpha)$. It follows from \eqref{eq:2_13} that
for all $t\in [0,T]$,
\begin{equation}\label{eq:convtype2stocha}
\begin{aligned}
\|P\diamond \Phi(t)\|_{L^r(\O;\tE_a)} &=
\|R_{\alpha}(\zeta_\alpha)(t)\|_{L^r(\O;\tE_a)} \\ & \lesssim \int_0^t
(t-s)^{\alpha-1-a} \|\zeta_\alpha(s)\|_{L^r(\O;E)} \, ds.
\end{aligned}
\end{equation}
By Proposition \ref{prop:stochinttype2} and \eqref{eq:2_15} we obtain that
\begin{align*}
\|\zeta_\alpha(s)\|_{L^r(\O;E)}^2 &\lesssim \Big\|\int_0^s (s-u)^{-2\alpha}
\|P(s,u) \Phi(u)\|_{\g(H,E)}^2 \, du\Big\|_{L^{r/2}(\O)}
\\ & \lesssim \Big\|\int_0^s (s-u)^{-2\alpha-2\theta_B} \|(-A_w(u))^{-\theta_B} \Phi(u)\|_{\g(H,E)}^2 \, du\Big\|_{L^{r/2}(\O)}
\\ & \leq \int_0^s (s-u)^{-2\alpha-2\theta_B} \|(-A_w(u))^{-\theta_B} \Phi(u)\|_{L^r(\O;\g(H,E))}^2 \,
du.
\end{align*}
If we combine this with \eqref{eq:convtype2stocha} we obtain that for all $t\in
[0,T]$ {\small
\begin{equation}\label{eq:convtype2stocha2}
\begin{aligned}
\|&P\diamond \Phi(t)\|_{L^r(\O;\tE_a)} \\ & \lesssim \int_0^t
(t-s)^{\alpha-1-a} \Big(\int_0^s (s-u)^{-2\alpha-2\theta_B}
\|(-A_w(u))^{-\theta_B} \Phi(u)\|_{L^r(\O;\g(H,E))}^2 \, du\Big)^{\frac12} \,
ds.
\end{aligned}
\end{equation}}
Note that $P\diamond \Phi\in Z_a^r$ follows from the fact that $P\diamond \Phi$
is also (H\"older)-continuous by Theorem \ref{thm:facttype2}.

(b): \ Let $\phi_1, \phi_2\in Z_a^r$. Then $(-A_w)^{-\theta_B}B(\cdot,\phi_1)$
and $(-A_w)^{-\theta_B}B(\cdot,\phi_2)$ are adapted and in
$L^\infty(0,T;L^r(\O;\g(H,E)))$. Denote
\[\Delta(\phi_1,\phi_2)(u) = (-A_w(u))^{-\theta_B} (B(u,\phi_1(u))-B(u, \phi_2(u))).\]
By (a) we obtain that $P\diamond B(\cdot,\phi_1)$ and $P \diamond
B(\cdot,\phi_2)$ are in $Z_a^r$ and

{\small
\begin{equation}\label{eq:convtype2stochb}
\begin{aligned}
\|P\diamond & B(\cdot, \phi_1)(t) - P\diamond B(\cdot, \phi_2)(t)\|_{L^r(\O;\tE_a)}\\
& \lesssim \int_0^t (t-s)^{\alpha-1-a} \Big(\int_0^s (s-u)^{-2\alpha-2\theta_B}
\|\Delta(\phi_1,\phi_2)(u)\|_{L^r(\O;\g(H,E))}^2 \, du \Big)^{\frac12}\, ds
\\ & \leq L_B \int_0^t (t-s)^{\alpha-1-a} \Big(\int_0^s (s-u)^{-2\alpha-2\theta_B}
\|\phi_1(u)-\phi_2(u)\|_{L^r(\O;\tE_a)}^2 \, du \Big)^{\frac12} \, ds.
\end{aligned}
\end{equation}
}

{\em Conclusions} --

It follows from the above considerations that $L$ is well-defined. For $p\geq
0$ define an equivalent norm on ${Z_a^r}$ by
\[\dbn \phi \dbn_{Z_a^r} = \sup_{t\in [0,T]} e^{-p t} \|\phi(t)\|_{L^r(\O;\tE_a)}.\]
We obtain that for $\phi_1,\phi_2\in Z_a^r$, we have
\begin{align*}
\dbn L(\phi_1) -L(\phi_2)\dbn_{Z_a^r} \leq  R_1 + R_2,
\end{align*}
where
\begin{align*}
R_1 &= \|P\diamond  B(\cdot, \phi_1)(t) - P\diamond B(\cdot,
\phi_2)(t)\|_{L^r(\O;\tE_a)},
\\ R_2 & = \|P*F(\cdot, \phi_1)(t) - P*F(\cdot,\phi_2)(t)\|_{L^r(\O;\tE_{a})}.
\end{align*}
It follows from \eqref{eq:dettype21b2} that
\begin{align*}
R_1 & \lesssim \sup_{t\in [0,T]} e^{-pt} \int_0^t (t-s)^{-a}
\|\phi_1-\phi_2\|_{L^r(\O;\tE_a)}\, ds  \\& = \sup_{t\in [0,T]} \int_0^t
e^{-p(t-s)}(t-s)^{-a} \, e^{-ps} \|\phi_1(s)-\phi_2(s)\|_{L^r(\O;\tE_a)}\, ds
\\ & \leq \int_0^T e^{-p s} s^{-a-\theta_F} \, ds \, \|\phi_1-\phi_2\|_{Z_{a}^r} = f(p,T,a,\theta_F) \|\phi_1-\phi_2\|_{Z_{a}^r},
\end{align*}
where $f(p,T,a,\theta_F)\downarrow 0$ as $p\to \infty$. Similarly, by
\eqref{eq:convtype2stochb}

{\small
\begin{align*}
R_2 & \lesssim \sup_{t\in [0,T]} e^{-pt} \int_0^t
(t-s)^{\alpha-1-a} \Big(\int_0^s (s-u)^{-2\alpha-2\theta_B}
\|\phi_1(u)-\phi_2(u)\|_{L^r(\O;\tE_a)}^2 \, du\Big)^{\frac12} \, ds
\\ & \leq \int_0^T e^{-ps} s^{\alpha-1-a} \, ds\, \Big(\int_0^T
e^{-2pu} u^{-2\alpha-2\theta_B} \, du\Big)^{\frac12} \,
\|\phi_1-\phi_2\|_{Z_a^r}
\\ & = g(p,T,a,\alpha,\theta_B) \|\phi_1-\phi_2\|_{Z_{a}^r},
\end{align*}}
where $g(p,T,a,\alpha,\theta_B)\downarrow 0$ as $p\to \infty$.

Taking $p$ large gives \eqref{eq:fixpointesttype}.  Moreover, the estimate
\eqref{eq:operatoresttype} follows from \eqref{eq:fixpointesttype} and
\[\dbn L(0)\dbn_{Z_a^r}\leq C(1+\|u_0\|_{L^r(\O;E_a^0)}).\]
\end{proof}

We can now obtain a first existence, uniqueness and regularity result for
\eqref{SEtype}.

\begin{proposition}\label{prop:mainexistenceLtype}
Assume (AT1), (AT2), \ref{as:isom}, \ref{as:LipschitzFtype} and
\ref{as:LipschitzBtype}. Let $r\in (2, \infty)$ be such that
$a+\frac1r<\min\{\frac12-\theta_B,1-\theta_F,\eta_0\}$ and let $u_0\in
L^r(\O,\F_0;E_{a}^0)$. Then the following assertions hold:
\begin{enumerate}
\item There exists a unique mild solution $U\in Z_a^r$ of \eqref{SEtype}.
Moreover, there exists a constant $C\geq 0$ independent of $u_0$ such that
\begin{equation}\label{eq:mainthmestimateLtype}
\|U\|_{Z_a^r} \leq C ( 1 + (\E\|u_0\|^r_{E_{a}^0})^{\frac{1}{r}}).
\end{equation}
\item There exists a version of $U$ in $L^r(\O;C([0,T];\tE_a))$. Furthermore,
for every $\delta,\lambda>0$ such that $\delta+a+\lambda+\frac1r
<\min\{\frac12-\theta_B,1-\theta_F,\eta_0\}$ there exists a version of $U$ such
that $U-P(\cdot,0)u_0$ in $L^r(\O;C^\lambda([0,T];\tE_{\delta+a}))$ and there
is a constant $C$ independent of $u_0$ such that
\begin{equation}\label{eq:mainthmestimateholdertype}
\Big(\E\|(U-P(\cdot,0)u_0)\|_{C^\lambda([0,T];\tE_{\delta+a})}^r\Big)^{\frac{1}{r}}
\leq C ( 1 + (\E\|u_0\|^r_{E_a^0})^{\frac{1}{r}}.
\end{equation}
\end{enumerate}
\end{proposition}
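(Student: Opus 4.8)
The plan is to obtain part (1) from the contraction estimate of Lemma~\ref{lem:contrtype} via Banach's fixed point theorem, and part (2) by feeding the resulting fixed point back into the convolution estimates of Sections~\ref{sec:detconv} and~\ref{sec:stochconv}.

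For part (1): fix $r\in(2,\infty)$ with $a+\frac1r<\min\{\tfrac12-\theta_B,1-\theta_F,\eta_0\}$. The space $Z_a^r$ is a closed subspace of the Banach space $C([0,T];L^r(\O;\tE_a))$, hence complete; by Lemma~\ref{lem:contrtype} the operator $L$ maps $Z_a^r$ into itself and is a strict contraction for the equivalent norm $\dbn\cdot\dbn$, so it has a unique fixed point $U\in Z_a^r$. Since $\E\int_0^T\|U(t)\|_{\tE_a}^r\,dt\le T\dbn U\dbn_{Z_a^r}^r<\infty$ (up to norm equivalence), $U$ lies in $L^r(0,T;\tE_a)$ a.s.; together with adaptedness, joint measurability and the identity $U=L(U)$ read coordinatewise in $t$, this shows $U$ is a mild solution in the sense of Definition~\ref{def:mild}. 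Conversely any mild solution lying in $Z_a^r$ is a fixed point of $L$, hence equals $U$, which gives uniqueness in $Z_a^r$. The bound \eqref{eq:mainthmestimateLtype} follows by inserting $\phi=U$ into \eqref{eq:operatoresttype}, absorbing $\tfrac12\dbn U\dbn_{Z_a^r}$ into the left-hand side, and passing back to the original norm of $Z_a^r$.

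For part (2): write $U=P(\cdot,0)u_0+P*F(\cdot,U)+P\diamond B(\cdot,U)$. By \eqref{eq:2_13} and Lemma~\ref{lem:initialcons} (applicable since $a<\eta_0$) the map $t\mapsto P(t,0)u_0$ is continuous into $\tE_a$ with $\|P(t,0)u_0\|_{\tE_a}\lesssim\|u_0\|_{E_a^0}$, so $P(\cdot,0)u_0\in L^r(\O;C([0,T];\tE_a))$. By the linear growth in \ref{as:LipschitzFtype}, $(-A_w)^{-\theta_F}F(\cdot,U)$ is strongly measurable, adapted, and $\E\int_0^T\|(-A_w(s))^{-\theta_F}F(s,U(s))\|_E^r\,ds\lesssim_T 1+\dbn U\dbn_{Z_a^r}^r$, and likewise $(-A_w)^{-\theta_B}B(\cdot,U)$ satisfies the analogous bound in $L^r(\O;L^r(0,T;\g(H,E)))$ by \ref{as:LipschitzBtype}. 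Now fix $\delta,\lambda>0$ with $\delta+a+\lambda+\frac1r<\min\{\tfrac12-\theta_B,1-\theta_F,\eta_0\}$. Applying Proposition~\ref{prop:detconv} pathwise with exponent $p=r$, angle parameter $\theta_F$ and smoothing index $\delta+a$ (admissible since $\lambda+(\delta+a)+\frac1r<\min\{1-\theta_F,\eta_0\}$ is part of the above budget), then raising to the $r$th power and taking expectations, gives $P*F(\cdot,U)\in L^r(\O;C^\lambda([0,T];\tE_{\delta+a}))$ with the corresponding estimate; here the $r=p$ choice makes the $L^r(0,T;L^r(\O))$ and $L^r(\O;L^r(0,T))$ norms coincide. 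For the stochastic term I would invoke Theorem~\ref{thm:facttype2} with $p=r$, $\theta=\theta_B$ and smoothing index $\delta+a$ (using $\lambda+(\delta+a)+\frac1r<\min\{\tfrac12-\theta_B,\eta_0\}$), in its $r$th-moment form: this is obtained by rerunning the proof of Theorem~\ref{thm:facttype2} with $L^r(\O)$-norms throughout, which is legitimate since Proposition~\ref{prop:stochinttype2} holds for every moment exponent in $(1,\infty)$. Summing the three contributions and inserting \eqref{eq:mainthmestimateLtype} yields \eqref{eq:mainthmestimateholdertype}; taking $\delta=0$ and $\lambda>0$ small in the same estimates produces the version of $U$ in $L^r(\O;C([0,T];\tE_a))$.

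The routine bookkeeping is the splitting of the scaling budget $\delta+a+\lambda+\frac1r<\min\{\tfrac12-\theta_B,1-\theta_F,\eta_0\}$ so that both the deterministic part (caps $1-\theta_F$ and $\eta_0$) and the stochastic part (caps $\tfrac12-\theta_B$ and $\eta_0$) have room with a common $\lambda,\delta$ and with $p=r$; this works precisely because the right-hand side is the minimum of the two caps, and the inequalities $\theta_F<\mu$, $\theta_B<\mu\wedge\tfrac12$, $r\in(2,\infty)$, $\delta+a\in[0,1)$, $\lambda\in(0,1)$ required by Proposition~\ref{prop:detconv} and Theorem~\ref{thm:facttype2} are all inherited from \ref{as:LipschitzFtype}, \ref{as:LipschitzBtype} and the standing hypotheses. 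The main obstacle I anticipate is the exponent mismatch in the statement of Theorem~\ref{thm:facttype2}, which couples the time-integrability exponent to the moment exponent: one must either re-derive the needed estimate with an independent moment exponent $r$ (cheap, via Proposition~\ref{prop:stochinttype2}) or arrange matters so that only the stated form is used. Everything else is a matter of carefully tracking the well-definedness of $P*F(\cdot,U)$ and $P\diamond B(\cdot,U)$ through the extrapolation spaces $E^t_{-\theta_F}$, $E^t_{-\theta_B}$, which was already settled in the discussion preceding Definition~\ref{def:mild}.
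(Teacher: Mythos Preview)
Your proposal is correct and follows essentially the same route as the paper: part (1) is Banach's fixed point theorem via Lemma~\ref{lem:contrtype} together with \eqref{eq:operatoresttype}, and part (2) feeds the fixed point into Proposition~\ref{prop:detconv} and Theorem~\ref{thm:facttype2} with $p=r$, then reassembles $\tilde U$ as a version of $U$. Your anticipated ``exponent mismatch'' obstacle is not actually present: in Theorem~\ref{thm:facttype2} the moment exponent and the time-integrability exponent are already the same parameter $p$, so taking $p=r$ gives exactly the $r$th-moment estimate you need without any re-derivation.
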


If $u_0\in L^r(\O;E_{\delta+a+\lambda}^0)$, then the same regularity
as in \eqref{eq:mainthmestimateholdertype} can be derived for the
solution $U$. Indeed, by Lemma \ref{lem:initialcons}
$P(\cdot,0)u_0\in L^r(\O;C^\lambda([0,T];\tE_{\delta+a}))$.

\begin{proof}
(1): \ It follows from Lemma \ref{lem:contrtype} that there exists a unique fix
point $U\in Z_a^r$ of $L$. It is clear from the definition of $L$ that $U$ is
the unique mild solution in $Z_a^r$.

(2): \ By Proposition \ref{prop:detconv} we obtain that
\[
\E\|P*F(\cdot,U)\|_{C^{\lambda}([0,T];\tE_{a+\delta})}^r \lesssim \E\|
(-A_w)^{-\theta_F} F(\cdot,U)\|_{L^r(0,T;E)}^r \lesssim 1 + \|U\|_{Z_a^r}.
\]
It follows from Theorem \ref{thm:facttype2} that
\begin{align*}
\E\|P\diamond B(\cdot,U)\|_{C^{\lambda}([0,T]; \tE_{a+\delta})}^r & \lesssim
\E\|(-A_w)^{-\theta_B}B(\cdot, U(s))\|_{L^r(0,T;\g(H,E))}^r \lesssim 1 +
\|U\|_{Z_a^r}.
\end{align*}

Define $\tilde U:[0,T]\times\O\to \tE_{a}$ as
\[\tilde U(t) = P(t,0) u_0 + P*F(\cdot,{U})(t)+P\diamond B(\cdot,{U})(t),\]
where we take the versions of the convolutions as above. Clearly, $\tilde U=U$
in $Z_a^r$ and therefore $\tilde U$ is the required mild solution. Moreover
there is a constant $C$ such that
\[\E\|\tilde U-P(\cdot,0)u_0)\|_{C^{\lambda}([0,T];
\tE_{a+\delta})}^r\leq C(1+\|\tilde U\|_{Z_a^r}).
\]
Now \eqref{eq:mainthmestimateholdertype} follows from
\eqref{eq:mainthmestimateLtype}.

\end{proof}

\begin{theorem}\label{thm:mainexistencegeninitial}
Assume (AT1), (AT2), \ref{as:isom}, \ref{as:LipschitzFtype} and
\ref{as:LipschitzBtype}. Let $u_0:\O\to E_{a}^0$ be strongly $\F_0$ measurable.
Then the following assertions hold:
\begin{enumerate}
\item There exists a unique mild solution $U$ of \eqref{SEtype} with paths in
$C([0,T];\tE_a)$ a.s.

\item For every $\delta,\lambda>0$ with $\delta+a+\lambda
<\min\{\frac12-\theta_B,1-\theta_F,\eta_0\}$ there exists a version
of $U$ such that $U-P(\cdot,0)u_0$ in
$C^\lambda([0,T];\tE_{\delta+a})$ a.s.
\end{enumerate}
\end{theorem}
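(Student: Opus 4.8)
\medskip
\noindent\textbf{Plan of proof.} The idea is to reduce everything to Proposition~\ref{prop:mainexistenceLtype} by truncating $u_0$ and pasting the resulting solutions together over $\F_0$-measurable sets. Fix $r\in(2,\infty)$ with $a+\frac1r<\min\{\frac12-\theta_B,1-\theta_F,\eta_0\}$, and, for the regularity statement with a given pair $\delta,\lambda$, chosen large enough that also $\delta+a+\lambda+\frac1r<\min\{\frac12-\theta_B,1-\theta_F,\eta_0\}$; such $r$ exists since $a+\theta_B<\frac12$, $a+\theta_F<1$ and $a<\eta_0$, and the construction below does not depend on the admissible choice. Because $u_0$ is strongly $\F_0$-measurable into $E_a^0$, the map $\omega\mapsto\|u_0(\omega)\|_{E_a^0}$ is $\F_0$-measurable, so $\Lambda_n:=\{\|u_0\|_{E_a^0}<n\}\in\F_0$, and $\Gamma_1:=\Lambda_1$, $\Gamma_n:=\Lambda_n\setminus\Lambda_{n-1}$ ($n\ge2$) form an $\F_0$-measurable partition of $\O$.

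The first step is a locality lemma: if $U_1,U_2\in Z_a^r$ are the mild solutions of \eqref{SEtype} with initial values $v_1,v_2\in L^r(\O,\F_0;E_a^0)$ and $\Gamma:=\{v_1=v_2\}$, then $U_1=U_2$ on $\Gamma$. To see this, set $W:=\one_\Gamma U_1+\one_{\Gamma^c}U_2\in Z_a^r$. Since $W=U_1$ on $\Gamma$ and $W=U_2$ on $\Gamma^c$, one has $F(s,W(s))=\one_\Gamma F(s,U_1(s))+\one_{\Gamma^c}F(s,U_2(s))$, and likewise for $B$; as $\one_\Gamma$ is $\F_0\subset\F_s$-measurable it factors out of the deterministic and stochastic convolutions and out of $P(\cdot,0)$. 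Hence $W$ is a mild solution in $Z_a^r$ with initial value $\one_\Gamma v_1+\one_{\Gamma^c}v_2=v_2$, so $W=U_2$ by the uniqueness in Proposition~\ref{prop:mainexistenceLtype}, and therefore $U_1=W=U_2$ on $\Gamma$.

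Next, put $u_0^n:=\one_{\Lambda_n}u_0\in L^\infty(\O,\F_0;E_a^0)\subset L^r(\O,\F_0;E_a^0)$ and let $U_n\in Z_a^r$ be the mild solution of Proposition~\ref{prop:mainexistenceLtype}, with its version in $L^r(\O;C([0,T];\tE_a))$ and, for (2), the version with $U_n-P(\cdot,0)u_0^n\in L^r(\O;C^\lambda([0,T];\tE_{\delta+a}))$. Since $u_0^m=u_0^n$ on $\Lambda_n\supset\Gamma_n$ for $m\ge n$, the locality lemma gives $U_m=U_n$ on $\Gamma_n$ as continuous-path processes; hence $U:=U_n$ on $\Gamma_n$ is well-defined, strongly measurable, adapted, has paths in $C([0,T];\tE_a)$ a.s., and $U-P(\cdot,0)u_0\in C^\lambda([0,T];\tE_{\delta+a})$ a.s. Multiplying the mild identity for $U_n$ by $\one_{\Gamma_n}$ — using, as above, that $\one_{\Gamma_n}$ passes through the convolutions and that $\one_{\Gamma_n}F(s,U_n(s))=\one_{\Gamma_n}F(s,U(s))$ (similarly for $B$ and $u_0^n$) — and summing over $n$ shows that $U$ is a mild solution, giving (1) and (2).

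For uniqueness, let $V$ be any mild solution with paths in $C([0,T];\tE_a)$ a.s. On $\Lambda_n$ the $P(\cdot,0)$-terms of $V$ and $U_n$ coincide, so cutting the difference of the two mild identities with $\one_{\Lambda_n}$ shows that $\Psi:=\one_{\Lambda_n}(V-U_n)$ satisfies a homogeneous equation $\Psi(t)=P*\Xi_F(t)+P\diamond\Xi_B(t)$ with $\|(-A_w(s))^{-\theta_F}\Xi_F(s)\|_E\le L_F\|\Psi(s)\|_{\tE_a}$ and $\|(-A_w(s))^{-\theta_B}\Xi_B(s)\|_{\g(H,E)}\le L_B\|\Psi(s)\|_{\tE_a}$. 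Since $\Psi$ has a.s.\ bounded paths, I would localize by $\sigma_k:=\inf\{t:\|\Psi(t)\|_{\tE_a}\ge k\}\wedge T\uparrow T$ and rerun, on $[0,\sigma_k]$, the estimates behind Lemma~\ref{lem:contrtype} (i.e.\ Propositions~\ref{prop:detconv} and \ref{prop:stochinttype2}), forcing $\Psi\equiv0$ on $[0,\sigma_k]$; then $k\to\infty$ and $n\to\infty$ give $V=U$ a.s. I expect this last step to be the main obstacle: one must check that $\one_\Gamma$ (for $\Gamma\in\F_0$) and the stopping-time cut-offs really commute with the \emph{stochastic} convolution $P\diamond(\cdot)$ — which is where the factorization representation of Section~\ref{sec:stochconv} is needed, since $P\diamond(\cdot)$ is not an It\^o integral in its upper limit — and that the stopped difference process lies in the class on which the contraction estimate of Lemma~\ref{lem:contrtype} operates. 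The remaining measurability, integrability and H\"older-regularity points are routine consequences of Proposition~\ref{prop:mainexistenceLtype}.
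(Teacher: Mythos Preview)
Your proposal is correct and follows essentially the same strategy as the paper: truncate $u_0$ on $\F_0$-measurable level sets, invoke Proposition~\ref{prop:mainexistenceLtype} on each piece, use a locality principle to glue, and argue uniqueness via stopping times and the contraction estimate of Lemma~\ref{lem:contrtype}.

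There are two small organizational differences worth noting. First, your locality lemma is proved by \emph{pasting} $W=\one_\Gamma U_1+\one_{\Gamma^c}U_2$ and invoking the uniqueness part of Proposition~\ref{prop:mainexistenceLtype}, whereas the paper (Lemma~\ref{lem:localityexistencetype}) argues directly from the contraction inequality: since $\Gamma\in\F_0$, one has $L(U_i)\one_\Gamma=L(U_i\one_\Gamma)\one_\Gamma$, hence $\dbn U_1\one_\Gamma-U_2\one_\Gamma\dbn\le\frac12\dbn U_1\one_\Gamma-U_2\one_\Gamma\dbn$. Both arguments hinge on the same fact---that $\one_\Gamma$ commutes with $L$---so neither buys more than the other. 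Second, for uniqueness you first restrict to $\Lambda_n\in\F_0$ and then stop the difference $\Psi$, whereas the paper stops \emph{both} solutions simultaneously via $\tau_n=\nu_n^1\wedge\nu_n^2$ and works in the slightly larger space $\widehat Z_a^r$ of adapted processes in $L^\infty(0,T;L^r(\O;\tE_a))$, checking that $U^i_n=\one_{[0,\tau_n]}(L(U^i_n))^{\tau_n}$ and that Lemma~\ref{lem:contrtype} still gives a strict contraction there. The paper's route sidesteps your flagged obstacle (commuting $\one_{\Lambda_n}$ and the stopping with $P\diamond$) by a single clean identity, so you may find it streamlines your last paragraph; but your outline, once those commutations are verified, is equally valid.
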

As below Proposition \ref{prop:mainexistenceLtype} if $u_0\in
E^0_{\delta+a+\lambda}$ a.s, then $U$ has a version with paths in
$C^\lambda([0,T];\tE_{\delta+a})$ for $\delta$ and $\lambda$ as in
Theorem \ref{thm:mainexistencegeninitial} (2).

For the proof we need the following lemma.

\begin{lemma}\label{lem:localityexistencetype}
Under the conditions of Proposition \ref{prop:mainexistenceLtype}
let $U$ and $V$ in the space $L^r(\O;C([0,T];\tE_a))$ be the mild
solutions of \eqref{SEtype} with initial values $u_0$ and $v_0$ in
$L^r(\O,\F_0;E_a^0)$. Then almost surely on the set $\{u_0 = v_0\}$
we have $U \equiv V$.
\end{lemma}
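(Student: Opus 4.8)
The plan is to use a localization/stopping argument together with the uniqueness part of Proposition \ref{prop:mainexistenceLtype}, exploiting the fact that the set $\Gamma := \{u_0 = v_0\}$ is $\F_0$-measurable and that all the ingredients of the fixed-point construction (the evolution family convolutions $P*F$ and $P\diamond B$, and the map $t\mapsto P(t,0)u_0$) respect multiplication by $\one_\Gamma$. Concretely, first I would note that $w_0 := \one_\Gamma u_0 = \one_\Gamma v_0$ again lies in $L^r(\O,\F_0;E_a^0)$, so by Proposition \ref{prop:mainexistenceLtype} there is a unique mild solution $W \in Z_a^r$ with initial value $w_0$, and it has a version in $L^r(\O;C([0,T];\tE_a))$.

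The key step is to show $\one_\Gamma U = W = \one_\Gamma V$ in $Z_a^r$. For this I would verify that $\one_\Gamma U$ is itself a mild solution with initial value $w_0$, and then invoke uniqueness. Since $U$ solves the fixed-point equation $U = L_{u_0}(U)$, one multiplies through by $\one_\Gamma$ and uses: (i) $\one_\Gamma P(\cdot,0)u_0 = P(\cdot,0)w_0$; (ii) $\one_\Gamma\, P*F(\cdot,U) = P*F(\cdot,\one_\Gamma U)$ because $\one_\Gamma$ is $\F_0$-measurable hence can be pulled inside the Bochner integral, and because the Lipschitz/growth structure in \ref{as:LipschitzFtype} forces $F(s,\omega,0)$-type terms; more precisely one should write $\one_\Gamma F(s,U(s)) = \one_\Gamma F(s,\one_\Gamma U(s)) + \one_{\Gamma^c}\cdot 0$-style bookkeeping, using that $F$ is only evaluated on the process values — in fact the cleanest route is to note that $U$ and $\one_\Gamma U$ agree on $\Gamma$ and that $F(\cdot,\cdot)$ is applied pointwise in $\omega$, so $\one_\Gamma F(\cdot,U) = \one_\Gamma F(\cdot,\one_\Gamma U)$ trivially; (iii) $\one_\Gamma\, P\diamond B(\cdot,U) = P\diamond B(\cdot,\one_\Gamma U)$, which is the analogous statement for the stochastic convolution and follows from the fact that $\one_\Gamma$ is $\F_0$-measurable, hence $\one_\Gamma \int_0^t \Psi\,dW_H = \int_0^t \one_\Gamma \Psi\,dW_H$ for adapted integrands $\Psi$ (this is the standard localization property of the It\^o integral, valid here by Proposition \ref{prop:stochinttype2} and approximation by step processes). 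Combining (i)--(iii) gives $\one_\Gamma U = L_{w_0}(\one_\Gamma U)$, so $\one_\Gamma U$ is a mild solution with initial value $w_0$; by the uniqueness in Proposition \ref{prop:mainexistenceLtype}, $\one_\Gamma U = W$ in $Z_a^r$. The identical argument gives $\one_\Gamma V = W$, whence $\one_\Gamma U = \one_\Gamma V$ in $Z_a^r$, i.e. $U = V$ a.s.\ on $\Gamma$ for each fixed $t$.

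Finally, to upgrade this to the pathwise statement ``$U \equiv V$ a.s.\ on $\Gamma$'', I would use that both $U$ and $V$ have versions in $L^r(\O;C([0,T];\tE_a))$: for each rational $t \in [0,T]$ we have $\one_\Gamma U(t) = \one_\Gamma V(t)$ almost surely, and taking a common null set over the countable collection of rationals and using path-continuity in $\tE_a$ yields $\one_\Gamma U(t) = \one_\Gamma V(t)$ for all $t$ simultaneously, almost surely. The main obstacle — really the only point requiring care — is the justification of the commutation of $\one_\Gamma$ with the stochastic convolution $P\diamond B$; once one writes $P(t,s)B(s,U(s)) = P(t,s)(-A_w(s))^{\theta_B}(-A_w(s))^{-\theta_B}B(s,U(s))$ as in Section \ref{sec:stochconv} and notes that the $\F_0$-measurable factor $\one_\Gamma$ passes through the approximating adapted step-process integrals, the identity $\one_\Gamma P\diamond B(\cdot,U) = P\diamond B(\cdot,\one_\Gamma U)$ follows by the same density argument used to define the integral, together with the Lipschitz bound \eqref{eq:LipschitzB} to control the passage to the limit.
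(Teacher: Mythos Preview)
There is a genuine gap in step (ii) (and likewise (iii)): the identity $\one_\Gamma\, P*F(\cdot,U) = P*F(\cdot,\one_\Gamma U)$ is false in general, because $F(s,\omega,0)$ need not vanish. What \emph{is} trivially true is $\one_\Gamma F(\cdot,U) = \one_\Gamma F(\cdot,\one_\Gamma U)$, and hence, after pulling $\one_\Gamma$ inside the integral, $\one_\Gamma\, P*F(\cdot,U) = \one_\Gamma\, P*F(\cdot,\one_\Gamma U)$ --- but the $\one_\Gamma$ on the right cannot be dropped. The same remark applies to the stochastic convolution. Consequently you only get $\one_\Gamma U = \one_\Gamma L_{w_0}(\one_\Gamma U)$, not $\one_\Gamma U = L_{w_0}(\one_\Gamma U)$; so $\one_\Gamma U$ is \emph{not} a mild solution with initial value $w_0$, and the uniqueness part of Proposition~\ref{prop:mainexistenceLtype} cannot be invoked as written.

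The fix is short and is exactly what the paper does: from the correct identities one has $L(U)\one_\Gamma = L(U\one_\Gamma)\one_\Gamma$ and $L(V)\one_\Gamma = L(V\one_\Gamma)\one_\Gamma$ (here $L=L_{u_0}$ or $L_{v_0}$; on $\Gamma$ they agree), and since $U=L(U)$, $V=L(V)$, subtraction and the contraction estimate \eqref{eq:fixpointesttype} of Lemma~\ref{lem:contrtype} give
\[
\dbn U\one_\Gamma - V\one_\Gamma\dbn_{Z_a^r}
= \dbn (L(U\one_\Gamma)-L(V\one_\Gamma))\one_\Gamma\dbn_{Z_a^r}
\le \tfrac12 \dbn U\one_\Gamma - V\one_\Gamma\dbn_{Z_a^r},
\]
hence $U\one_\Gamma = V\one_\Gamma$ in $Z_a^r$. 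Your path-continuity step then finishes as you wrote. In other words, once the commutation identity is stated correctly, the natural conclusion is via the contraction inequality directly, not via the auxiliary solution $W$.
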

\begin{proof}
Let $\Gamma = \{u_0 = v_0\}$. Since $\Gamma$ is $\F_0$-measurable it follows
from Lemma \ref{lem:contrtype} that
\begin{align*}
\dbn U\one_{\Gamma}-V\one_{\Gamma}\dbn_{Z_r^a} &= \dbn
L(U)\one_{\Gamma}-L(V)\one_{\Gamma}\dbn_{Z_r^a}
\\ & = \dbn(L(U\one_{\Gamma})-L(V\one_{\Gamma}))\one_{\Gamma}\dbn_{Z_r^a}
\\ & \leq \frac12
\dbn U\one_{\Gamma}-V\one_{\Gamma}\dbn_{Z_r^a},
\end{align*}
hence $U|_{[0,T]\times\Gamma}= V|_{[0,T]\times\Gamma}$ in $Z_a^r$. The result
now follows from the path continuity of $U$ and $V$.
\end{proof}

\begin{proof}[Proof of Theorem \ref{thm:mainexistencegeninitial}]
Let $r>2$ be such that $\delta+a+\lambda
+\frac1r<\min\{\frac12-\theta_B,1-\theta_F,\eta_0\}$. Define
$(u_n)_{n\geq 1}$ in $L^r(\O,\F_0;E_a^0)$ as $u_n =
\one_{\{\|u_0\|\leq n\}} u_0$. By Proposition
\ref{prop:mainexistenceLtype}, for each $n\geq 1$, there is a mild
solution $U_n\in Z_a^r$ of \eqref{SEtype} with initial value $u_n$
and we may take the version of $U_n$ from Proposition
\ref{prop:mainexistenceLtype} (2). Lemma
\ref{lem:localityexistencetype} implies that for $1\leq m\leq n$
almost surely on the set $\{\|u_0\|\leq m\}$, for all $t\in [0,T]$,
$U_n(t) = U_m(t)$. It follows that almost surely, for all $t\in
[0,T]$, $\limn U_n(t)$ exists in $\tE_a$. Define $U:[0,T]\times\O\to
\tE_a$ as $U(t) = \limn U_n(t)$ if this limit exists and $0$
otherwise. Clearly, $U$ is strongly measurable and adapted.
Moreover, almost surely on $\{\|u_0\|\leq n\}$, for all $t\in
[0,T]$, $U(t) = U_n(t)$ and hence $U-P(\cdot,0) u_0$ has the same
regularity as $U_n-P(\cdot,0) u_n$. It can be easily checked that
$U$ is a mild of \eqref{SEtype} satisfying (2).

{\em Uniqueness:} Let $U^1,U^2\in C([0,T];\tE_a)$ a.s. be mild solutions of
\eqref{SEtype}. For each $n\geq 1$ and $i=1,2$ define the stopping times
$\nu_n^i$ as
\[\nu_n^i = \inf\Big\{t\in[0,T]:\|U^i(t)\|_{\tE_{a}} \geq n\Big\}.\]
For each $n\geq 1$ let $\tau_n = \nu_n^1 \wedge \nu_n^2$, and let $U^1_n =
U^1\one_{[0,\tau_n]}$ and $U^2_n = U^2\one_{[0,\tau_n]}$. Then for all $n\geq
1$, $U^1_n$ and $U^2_n$ are in $L^r(\O;L^\infty(0,T;\tE_a))$ for all $r<\infty$
so in particular in $L^\infty(0,T;L^r(\O;\tE_a))$ for all $r<\infty$. One
easily checks that
\[
U^i_n = \one_{[0,\tau_n]} (L(U^i_n))^{\tau_n}, \ i=1,2,
\]
where $L$ is the mapping introduced before Lemma \ref{lem:contrtype} and
\[(L (U^i_n))^{\tau_n}(t) := (L (U^i_n))(t\wedge \tau_n),\ i=1,2.\]
One can check that Lemma \ref{lem:contrtype} remains valid if $Z_a^r$ is
replaced by $\widehat{Z}_a^r$ the space of all adapted processes in
$L^\infty(0,T;L^r(\O;\tE_a))$. Therefore,
\begin{align*}
\dbn U^1_n - U^2_n\dbn_{\widehat{Z}_a^r}  &= \dbn \one_{[0,\tau_n]}(
L(U^1_n)^{\tau_n} - L(U^2_n)^{\tau_n})\dbn_{\widehat{Z}_a^r}
\\ & \leq \dbn L(U^1_n) - L(U^2_n)\dbn_{\widehat{Z}_a^r}
\\ & \leq \frac12 \dbn U^1_n - U^2_n\dbn_{\widehat{Z}_a^r}.
\end{align*}
This implies that $U^1_n = U^2_n$ in $\widehat{Z}_a^r$. Therefore, for all
$t\leq \tau_n$, $U^1(t) = U^2(t)$ a.s.\ Letting $n$ tend to infinity yields
that for all $t\in [0,T]$, $U^1(t) = U^2(t)$ a.s.\ and by path-continuity this
implies that a.s. for all $t\in [0,T]$, $U^1(t) = U^2(t)$.
\end{proof}

\section{Local mild solutions\label{sec:local}}

Next we extend the results to the case where $F$ and $B$ are locally
Lipschitz. This is a standard procedure (cf.\ \cite{Brz2,NVW3,Sei} and
references therein), but we believe it is better to include it here
for completeness. Assume (AT) and \ref{as:isom}.

\let\ALTERWERTA\theenumi
\let\ALTERWERTB\labelenumi
\def\theenumi{(H2)$'$}
\def\labelenumi{(H2)$'$}
\begin{enumerate}
\item \label{as:LipschitzFtypeloc} Let $a\in [0,\eta_0)$ and $\theta_F\in
[0,\mu)$ be such that $a+\theta_F <1$. For all $x\in \tE_a$, $(t,
\omega)\mapsto (-A_w(t))^{-\theta_F}  F(t, \omega,x)\in E$ is
strongly measurable and adapted. The function $(-A_w(t))^{-\theta_F}
F$ is locally Lipschitz continuous in space uniformly in
$[0,T]\times\O$, that is for each $R>0$ there is a constant
$L_{F,R}$ such that for all $t\in [0,T], \omega\in \O, x,y\in
\tE_{a}$ with $\|x\|_{\tE_a},\|y\|_{\tE_a}\leq R$,
\begin{eqnarray}\nonumber
\|(-A_w(t))^{-\theta_F} (F(t,\omega,x)-F(t,\omega,y))\|_{E}&\leq &
L_{F,R}\|x-y\|_{\tE_{a}}.
\end{eqnarray}
\end{enumerate}
\let\theenumi\ALTERWERTA
\let\labelenumi\ALTERWERTB

\let\ALTERWERTA\theenumi
\let\ALTERWERTB\labelenumi
\def\theenumi{(H3)$'$}
\def\labelenumi{(H3)$'$}
\begin{enumerate}
\item \label{as:LipschitzBtypeloc} Let $a\in [0,\eta_0)$ and $\theta_B\in
[0,\mu)$ be such that $a+\theta_B <\frac12$. For all $x\in \tE_a$,
$(t, \omega)\mapsto (-A_w(t))^{-\theta_B} B(t, \omega,x)\in E$ is
strongly measurable and adapted. The function $(-A_w)^{-\theta_B}B$
has linear growth and is locally Lipschitz continuous in space
uniformly in $[0,T]\times\O$, that is for each $R>0$ there is a
constant $L_{B,R}$ such that for all $t\in [0,T], \omega\in \O,
x,y\in \tE_{a}$ with $\|x\|_{\tE_a},\|y\|_{\tE_a}\leq R$,
\begin{eqnarray}
\nonumber
\|(-A_w(t))^{-\theta_B}(B(t,\omega,x)-B(t,\omega,y))\|_{\g(H,E)}&\leq
& L_{B,R}\|x-y\|_{\tE_{a}},
\end{eqnarray}
\end{enumerate}
\let\theenumi\ALTERWERTA
\let\labelenumi\ALTERWERTB

We recall the definition of an admissible
process and a local mild solution.
Let $T>0$ and let $\tau$ be a stopping time with values in $[0,T]$.
For $t\in [0,T]$ let
\[\O_t(\tau) = \{\omega\in \O: t<\tau(\omega)\},\]
\[[0,\tau)\times\O=\{(t,\omega)\in [0,T]\times\O: 0\leq t<\tau(\omega)\}.\]
A process $\zeta:[0,\tau)\times \O \to E$ (or $(\zeta(t))_{t\in
[0,\tau)})$ is called {\em admissible} if for all $t\in [0,T]$,
$\O_t(\tau)\ni\omega\to \zeta(t,\omega)$ is $\F_t$-measurable and
for almost all $\omega\in \O$, $[0,\tau(\omega))\ni t\mapsto
\zeta(t,\omega)$ is continuous.

\begin{definition}
Assume (AT), \ref{as:isom}, \ref{as:LipschitzFtypeloc} and \ref{as:LipschitzBtypeloc}. We call an admissible $\tE_a$-valued process $(U(t))_{t\in
[0,\tau)}$ a {\em local mild solution} of \eqref{SEtype}, if
$\tau\in(0,T]$,
$\tau = \limn \tau_n$, where
\begin{equation}\label{eq:taunU}
\tau_n = \inf\{t\in [0,T]: \|U(t)\|_{\tE_a}\geq n\}, \ \ n\geq 1
\end{equation}
and such that for all $t\in [0,T]$ and all $n\geq 1$, the
following condition holds:
for all $t\in [0,T]$, a.s.
\begin{align*}
U(t\wedge \tau_n) = P(t\wedge \tau_n,0) u_0 & + \int_0^{t\wedge \tau_n} P(t\wedge \tau_n,s)F(s,U(s\wedge \tau)) \one_{[0,\tau_n]}(s) \, ds \\ & + I_{\tau_n}(B(\cdot,U))(t\wedge \tau_n).
\end{align*}
\end{definition}
In \eqref{eq:taunU} we take $\tau_n = T$ if the infimum is taken over the empty set.
By \ref{as:LipschitzFtypeloc} and Proposition \ref{prop:detconv} the deterministic convolution is well-defined and pathwise continuous. The process $I_{\tau_n}(B(\cdot,U))$ is defined by
\[I_{\tau_n}(B(\cdot,U)) (t) = \int_0^t P(t,s) B(s,U(s\wedge \tau_n)) \one_{[0,\tau_n]}(s) \, d W_H(s).\]
This process is well-defined and pathwise continuous by Theorem \ref{thm:facttype2}. Therefore, $I_{\tau_n}(B(\cdot,U))(t\wedge \tau_n)$ is well-defined.
The motivation for defining $I_{\tau_n}$ in this way is explained in the appendix of \cite{BMS}.
It is needed in order to avoid the use of the process
\[s\mapsto P(t\wedge \tau_n,s) B(s,U(s\wedge \tau_n))\one_{[0,\tau_n]}(s),\]
which is not adapted, since $P(t\wedge \tau_n,s)x$ is not adapted for $x\in E\setminus \{0\}$. This problem seems to be overlooked in some of the existing literature, and the referee kindly communicated the problem and \cite{BMS} to the author.

%

\medskip

For $a\in[0,\eta_0)$ and $r\in [1,
\infty)$ let $Z_{a,\text{adm}}(\tau)$ be the space of
$\tE_a$-valued admissible processes $(\phi(t))_{t\in [0,\tau)}$.
A local mild solution $(U(t))_{t\in [0,\tau)}$ is called {\em
maximal} for the space $Z_{a,\text{adm}}(\tau)$ if for any other local mild solution
$(\tilde{U}(t))_{t\in[0,\tilde{\tau})}$ in $Z_{a,\text{adm}}(\tau)$, almost surely we
have $\tilde \tau\leq \tau$ and $\tilde{U}\equiv U|_{[0,\tilde
\tau)}$. Clearly, a maximal local mild solution
is always unique in $Z_{a,\text{adm}}(\tau)$. We say that a local mild solution
$(U(t))_{t\in [0,\tau)}$ of \eqref{SEtype} is a {\em global mild
solution} of \eqref{SEtype} if $\tau = T$ almost surely and $U$ has
an extension to a mild solution $\hat{U}:[0,T]\times\O\to \tE_a$ of
\eqref{SEtype}. In particular, almost surely ``no blow" up occurs at
$t=T$.

We say that $\tau$ is an {\em explosion time} if for almost all
$\omega\in \O$ with $\tau(\omega)<T$,
\[
\limsup_{t\uparrow \tau(\omega)} \|U(t,\omega)\|_{\tE_a} = \infty.
\]
Notice that if $\tau=T$ almost surely, then $\tau$ is always an
explosion time in this definition. However, there does not have to
be any ``blow up" in this case.

\begin{theorem}\label{thm:mainexistenceLocallyLipschitzCasetype}
Assume (AT1), (AT2), \ref{as:isom}, \ref{as:LipschitzFtypeloc} and
\ref{as:LipschitzBtypeloc}. Let $u_0:\O\to E_{a}^0$ be strongly
$\F_0$-measurable. Then the following assertion hold:
\begin{enumerate}
\item There exists a unique maximal local mild solution
$(U(t))_{[0,\tau)}$ in $Z_{a,\text{adm}}(\tau)$ of \eqref{SEtype}.

\item For every $\lambda,\delta>0$ with
$\lambda+\delta+a<\min\{1-\theta_F, \frac{1}{2}-\theta_B,\eta_0\}$
there exists a version of $U$ such that for almost all $\omega\in
\O$,
\[t\mapsto U(t,\omega) - P(t,0) u_0(\omega)\in C^\lambda_{loc}([0,\tau(\omega));\tE_{a+\delta}).\]
\end{enumerate}

If, additionally $F$ and $B$ are of linear growth, i.e.\
\eqref{eq:linF} and \eqref{eq:linB} hold, then the following
assertions hold:
\begin{enumerate}
\item[(3)]\label{it:lingrowth1type} The function
$U$ from (1) and (2) is the unique global mild solution of
\eqref{SEtype} with paths in $C([0,T];\tE_a)$ and the statements of
Theorem \ref{thm:mainexistencegeninitial} hold.

\item[(4)]\label{it:lingrowth2type}
If $r\in (2, \infty)$ is such that $a+\frac1r<\min\{1-\theta_F,
\frac{1}{2}-\theta_B,\eta_0\}$ and $u_0\in L^r(\O,\F_0;E^0_a)$, then
the solution $U$ is in $Z_{a}^r$ and the statement of Proposition
\ref{prop:mainexistenceLtype} hold.
\end{enumerate}
\end{theorem}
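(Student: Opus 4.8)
The plan is to reduce everything to the globally Lipschitz theory of Section \ref{sec:Lipcoefinttype} by the standard truncation procedure. First I would fix a $C^\infty$ cutoff $\rho:\R_+\to[0,1]$ with $\rho\equiv 1$ on $[0,1]$ and $\rho\equiv 0$ on $[2,\infty)$, and for $n\ge 1$ set $F_n(t,\omega,x)=\rho(\|x\|_{\tE_a}/n)F(t,\omega,x)$ and $B_n(t,\omega,x)=\rho(\|x\|_{\tE_a}/n)B(t,\omega,x)$. Using \ref{as:LipschitzFtypeloc} and \ref{as:LipschitzBtypeloc} one checks that $(-A_w)^{-\theta_F}F_n$ and $(-A_w)^{-\theta_B}B_n$ are Lipschitz in space uniformly in $[0,T]\times\O$ (with constants controlled by $L_{F,2n}$, $L_{B,2n}$) and of linear growth, so \ref{as:LipschitzFtype} and \ref{as:LipschitzBtype} hold for $F_n,B_n$. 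Theorem \ref{thm:mainexistencegeninitial} then provides a unique mild solution $U^{(n)}$ of \eqref{SEtype} (with $F,B$ replaced by $F_n,B_n$) with paths in $C([0,T];\tE_a)$ a.s., together with the H\"older regularity in its part (2).

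Next I would run a locality argument. Put $\sigma_n=\inf\{t:\|U^{(n)}(t)\|_{\tE_a}\ge n\}$. For $m\le n$ the coefficients $F_n,B_n$ and $F_m,B_m$ agree on $\{\|x\|_{\tE_a}\le m\}$, hence along the path of $U^{(m)}$ up to $\sigma_m$; so, by the stopping-time version of the uniqueness argument in the proof of Theorem \ref{thm:mainexistencegeninitial} — which rests on Lemma \ref{lem:contrtype} applied to the stopped process on the space $\widehat{Z}_a^r$, using the $I_{\tau_n}$ construction to keep the stochastic integrand adapted — one obtains $U^{(n)}=U^{(m)}$ on $[0,\sigma_m]$ and, in particular, $\sigma_m\le\sigma_n$. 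Thus $(\sigma_n)$ is nondecreasing; set $\tau=\lim_n\sigma_n$ and define $U$ on $[0,\tau)$ by $U=U^{(n)}$ on $[0,\sigma_n)$. This $U$ is well defined, admissible, and a local mild solution with the stopping times \eqref{eq:taunU} equal to $\sigma_n$: the required identity for $U(t\wedge\tau_n)$ follows from that for $U^{(n)}$, since on $\{t<\tau_n\}$ one has $\|U(s)\|_{\tE_a}\le n$ for $s\le t$, so $F_n=F$ and $B_n=B$ along the path. On $\{\tau<T\}$ one has $\sigma_n<T$ for every $n$, which forces $\limsup_{t\uparrow\tau}\|U(t)\|_{\tE_a}=\infty$, so $\tau$ is an explosion time; the local H\"older regularity in (2) is inherited from Theorem \ref{thm:mainexistencegeninitial}(2) for each $U^{(n)}$ on $[0,\sigma_n)$, consistently patched. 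For maximality, given another local mild solution $(\tilde U(t))_{t\in[0,\tilde\tau)}$ with stopping times $\tilde\tau_n$, comparing $\tilde U$ and $U^{(n)}$ both stopped at $\tilde\tau_n$ on $\widehat{Z}_a^r$ (again via Lemma \ref{lem:contrtype}) yields $\tilde U=U^{(n)}=U$ on $[0,\tilde\tau_n\wedge\sigma_n)$; letting $n\to\infty$ gives $\tilde\tau\le\tau$ and $\tilde U\equiv U|_{[0,\tilde\tau)}$, hence uniqueness in $Z_{a,\text{adm}}(\tau)$.

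For the linear growth case (3)--(4) the point is an a priori bound uniform in $n$. Fix $r\in(2,\infty)$ as in (4) and $u_0\in L^r(\O,\F_0;E_a^0)$ (for general $u_0$ in (3), truncate $u_0$ as in the proof of Theorem \ref{thm:mainexistencegeninitial}). Applying to $U^{(n)}$ the estimates behind Lemma \ref{lem:contrtype} but with the linear growth bounds \eqref{eq:linF} and \eqref{eq:linB} in place of the Lipschitz bounds — i.e.\ the computation leading to \eqref{eq:operatoresttype} — together with Proposition \ref{prop:detconv}, Theorem \ref{thm:facttype2} and a generalized (weakly singular kernel) Gronwall inequality for the kernels $(t-s)^{\alpha-1-a}$ and $(s-u)^{-2\alpha-2\theta_B}$ occurring in \eqref{eq:convha2} and \eqref{eq:convtype2stocha2}, gives a constant $C$ \emph{independent of $n$} with $\|U^{(n)}\|_{Z_a^r}\le C\bigl(1+(\E\|u_0\|_{E_a^0}^r)^{1/r}\bigr)$, and likewise an $n$-independent bound in $L^r(\O;C([0,T];\tE_a))$. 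Then $\P(\sigma_n<T)\le\P\bigl(\sup_{t\le\sigma_n}\|U^{(n)}(t)\|_{\tE_a}\ge n\bigr)\le n^{-r}\,\E\sup_{t\le T}\|U^{(n)}(t)\|_{\tE_a}^r\le C n^{-r}\to 0$, so $\tau=\lim_n\sigma_n=T$ a.s. On each event $\{\sigma_n=T\}$ the sequence $U^{(m)}$ is eventually equal to $U$ on all of $[0,T]$, so $U=\lim_n U^{(n)}$ a.s.; by the uniform bound and Fatou's lemma $U\in Z_a^r$, it is the (unique) global mild solution, and the full statements of Theorem \ref{thm:mainexistencegeninitial} and Proposition \ref{prop:mainexistenceLtype} carry over.

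I expect the main obstacle to be twofold. The first is bookkeeping with adaptedness when comparing solutions across different truncation levels and different stopping times: this is exactly the non-adaptedness phenomenon flagged after the definition of a local mild solution, and it is handled by the $I_{\tau_n}$ device of \cite{BMS} together with checking that Lemma \ref{lem:contrtype} remains valid on $\widehat{Z}_a^r$ with a stopping time inserted. The second is the $n$-independent a priori estimate in the linear growth case, which requires rerunning the weakly singular convolution estimates with the linear growth constants rather than the Lipschitz ones and invoking a Gronwall lemma adapted to those kernels; everything else is routine patching.
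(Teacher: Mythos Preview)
Your proposal is correct and follows exactly the ``standard localization technique'' the paper invokes: the paper does not actually prove Theorem~\ref{thm:mainexistenceLocallyLipschitzCasetype} (or Lemma~\ref{lem:localuniqueness2type}) in the text but simply writes ``Both results can be proved using standard localization techniques'' and defers to \cite{Brz2,Sei,NVW3,VThesis}. Your truncation $F_n,B_n$ via a cutoff, application of Theorem~\ref{thm:mainexistencegeninitial} to each level, the stopping-time consistency argument (which is precisely the content of Lemma~\ref{lem:localuniqueness2type}), and the $n$-uniform a~priori bound under linear growth for (3)--(4) are the standard steps carried out in those references, so there is nothing to contrast.
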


The proof is based on the following local uniqueness result.
\begin{lemma}\label{lem:localuniqueness2type}
Assume that the conditions of Theorem
\ref{thm:mainexistenceLocallyLipschitzCasetype} are satisfied.
Assume that $(U_1(t))_{t\in [0,\tau_1)}$ in
$Z_{a,\text{adm}}(\tau_1)$ and $(U_2(t))_{t\in [0,\tau_2)}$ in
$Z_{a,\text{adm}}(\tau_2)$ are local mild solution of \eqref{SEtype}
with initial values $u_0^1$ and $u_0^2$. Let $\Gamma =
\{u_0^1=u_0^2\}$. Then almost surely on $\Gamma$,
$U_1|_{[0,\tau_1\wedge \tau_2)} \equiv U_2|_{[0,\tau_1\wedge
\tau_2)}$. Moreover, if $\tau_1$ is an explosion time for $U_1$,
then almost surely on $\Gamma$, $\tau_1 \geq \tau_2$. If $\tau_1$
and $\tau_2$ are explosion times for $U_1$ and $U_2$, then almost
surely on $\Gamma$, $\tau_1=\tau_2$ and $U_1\equiv U_2$.
\end{lemma}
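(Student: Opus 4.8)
The proof is a localization argument that reduces local uniqueness to the global (Lipschitz) uniqueness already encoded in the contraction estimate of Lemma \ref{lem:contrtype}. First I would introduce the stopping times that make everything integrable: for $n\geq 1$ set
\[
\rho_n = \inf\{t\in[0,T]: \|U_1(t)\|_{\tE_a}\geq n \text{ or } \|U_2(t)\|_{\tE_a}\geq n\}\wedge \tau_1\wedge\tau_2,
\]
(with the convention that the infimum over the empty set is $T$), so that $\rho_n\uparrow \tau_1\wedge\tau_2$ on $\Gamma$ by admissibility and the definition of $\tau_n$ in \eqref{eq:taunU}. On $[0,\rho_n)$ both $U_1$ and $U_2$ stay in a ball of radius $n$ in $\tE_a$, so by \ref{as:LipschitzFtypeloc} and \ref{as:LipschitzBtypeloc} the truncated coefficients $F(\cdot,U_i)\one_{[0,\rho_n]}$ and $B(\cdot,U_i)\one_{[0,\rho_n]}$ satisfy the global Lipschitz bounds \eqref{eq:LipschitzF} and \eqref{eq:LipschitzB} with constant $L_{F,n}$, resp.\ $L_{B,n}$, and the stopped processes $U_i\one_{[0,\rho_n]}$ lie in $L^\infty(0,T;L^r(\O;\tE_a))$ for every $r<\infty$.

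Next I would rewrite the local mild solution identity in the fixed-point form. Exactly as in the uniqueness part of the proof of Theorem \ref{thm:mainexistencegeninitial}, one checks that for $i=1,2$,
\[
U_i \one_{[0,\rho_n]} = \one_{[0,\rho_n]}\,\bigl(L_n(U_i\one_{[0,\rho_n]})\bigr)^{\rho_n},
\]
where $L_n$ is the fixed-point operator built from the $n$-truncated coefficients and $(\cdot)^{\rho_n}$ denotes stopping at $\rho_n$; here one must use, as in the discussion following the definition of a local mild solution, the process $I_{\rho_n}(B(\cdot,U_i))$ to keep the integrand adapted rather than the non-adapted $s\mapsto P(t\wedge\rho_n,s)B(s,\cdot)$. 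On the $\F_0$-measurable set $\Gamma$ the two solutions have the same initial data, so multiplying by $\one_\Gamma$ and using that $\one_\Gamma$ commutes with $L_n$ (because $\Gamma\in\F_0$ and the equation is linear in the forcing through the convolutions), I would apply the contraction estimate of Lemma \ref{lem:contrtype} — valid on $\widehat Z_a^r$ with the same proof — to get
\[
\dbn \one_\Gamma(U_1\one_{[0,\rho_n]}-U_2\one_{[0,\rho_n]})\dbn_{\widehat Z_a^r}
\leq \tfrac12\,\dbn \one_\Gamma(U_1\one_{[0,\rho_n]}-U_2\one_{[0,\rho_n]})\dbn_{\widehat Z_a^r},
\]
hence $U_1 = U_2$ on $[0,\rho_n)$ almost surely on $\Gamma$. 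Letting $n\to\infty$ and using path-continuity gives $U_1|_{[0,\tau_1\wedge\tau_2)}\equiv U_2|_{[0,\tau_1\wedge\tau_2)}$ a.s.\ on $\Gamma$.

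Finally I would deduce the statements about explosion times. Suppose $\tau_1$ is an explosion time for $U_1$, and argue on $\Gamma\cap\{\tau_1<\tau_2\}$: on this set $U_2$ is continuous on $[0,\tau_2)\supset[0,\tau_1]$, so $\sup_{t<\tau_1}\|U_2(t)\|_{\tE_a}<\infty$; but by the previous paragraph $U_1=U_2$ on $[0,\tau_1)$, so $\sup_{t<\tau_1}\|U_1(t)\|_{\tE_a}<\infty$, contradicting that $\tau_1$ is an explosion time and $\tau_1<T$. Hence $\P(\Gamma\cap\{\tau_1<\tau_2\})=0$, i.e.\ $\tau_1\geq\tau_2$ a.s.\ on $\Gamma$. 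If both $\tau_1$ and $\tau_2$ are explosion times, applying this symmetrically yields $\tau_1=\tau_2$ a.s.\ on $\Gamma$, and then $U_1\equiv U_2$ on $\Gamma$ follows from the coincidence on $[0,\tau_1\wedge\tau_2)=[0,\tau_1)$.

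\textbf{Main obstacle.} The only genuinely delicate point is the measurability/adaptedness bookkeeping in the stopped fixed-point identity: one must make sure that stopping the convolutions at $\rho_n$ and inserting the indicator $\one_{[0,\rho_n]}$ produces an honestly adapted integrand (this is the reason for the $I_{\rho_n}$ device), and that $\rho_n$ — which depends on \emph{both} $U_1$ and $U_2$ — is still a stopping time so that Lemma \ref{lem:contrtype} applies on $\widehat Z_a^r$. Everything else is a routine repetition of the arguments already carried out in the proof of Theorem \ref{thm:mainexistencegeninitial}.
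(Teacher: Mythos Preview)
Your proposal is correct and follows precisely the standard localization route that the paper has in mind: the paper does not actually prove this lemma but only writes ``Both results can be proved using standard localization techniques'' and refers to \cite{Brz2}, \cite{Sei}, \cite{NVW3}, \cite{VThesis}. Your argument is the natural splice of the uniqueness part of the proof of Theorem~\ref{thm:mainexistencegeninitial} (the stopped fixed-point identity on $\widehat Z_a^r$) with the $\one_\Gamma$-trick of Lemma~\ref{lem:localityexistencetype}, and the explosion-time contradiction is the expected final step.
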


Both results can be proved using standard localization techniques.
We refer the reader to \cite[Section 4]{Brz2}, \cite[Section
5]{Sei}, \cite[Section 8]{NVW3} or \cite[Chapter 8]{VThesis} for a
proof in a framework close to the one above.

\section{Examples\label{sec:SSV}}

Below we consider the stochastic partial differential equation from
\cite{SSV}. We will apply Theorem \ref{thm:mainexistencegeninitial}
and Theorem \ref{thm:mainexistenceLocallyLipschitzCasetype} to
obtain existence, uniqueness and regularity of mild solutions. By
Proposition \ref{prop:varmild} this will also give the unique
variational solution. The operator $A(t)$ will be a time dependent
second order elliptic differential operator with (time-dependent)
Neumann boundary conditions. As in \cite{SSV} we consider second
order equations with noise that is white with respect to the time
variable and colored with respect to space variable. We will reprove
and improve some of the regularity results from \cite{SSV} using the
results of the previous sections. This will be done in three
examples below

Recall that $a$ is the parameter for the solution space $\tE_a$. For
the Examples \ref{ex:Lip1} and \ref{ex:Lip2} it will suffice to take
$a=0$ in Theorem \ref{thm:mainexistencegeninitial}. In Example
\ref{ex:loclLip} we consider the locally Lipschitz case, and there
we need $a>0$. The parameter $\theta_B$ we allow us to consider
covariance operators which are not necessarily of trace class. For
details on covariance operators we refer to \cite{Bog,DPZ}.

\begin{remark}\label{rem:othercases}
Some other examples which fit into our general framework:
\begin{enumerate}
\item Higher order equations, possibly driven by multiplicative space-time white noise.
Note that for second order equations, this is only possible for
dimension one, and therefore not very illustrative for our setting.
In regular bounded domains in $\R^n$ one can consider multiplicative
space-time white noise if the order of the elliptic operator $2m>n$
(see \cite{NVW3} for the autonomous case and \cite[Chapter
8]{VThesis} for the non-autonomous case with Dirichlet boundary
conditions).

\item $F$ and $B$ could be (non)-linear differential operators of lower
order.

\item Equations with boundary noise. This is work in progress
\cite{VSchn}.
\end{enumerate}
\end{remark}

Let us first recall some basic notations (cf.\ \cite{Tr1}). Let $S$
be a bounded domain and $m\in \N$, $p,q\in [1, \infty]$, $s\in \R$,
$\beta_1,\beta_2\in (0,1)$. $W^{m,p}(S)$ will be the Sobolev space.
$B^{s}_{p,q}(S)$ will be the Besov space. $H^{s,p}(S)$ is the Bessel
potential space and $H^{s}(S) := H^{2,p}(S)$ and $H^{m,p}(S) =
W^{m,p}(S)$. $C^{\delta}(\overline{S})$ is the space of
$\delta$-H\"older continuous functions.
$C^{\beta_1,\beta_2}(\overline{S}\times[0,T])$ is the space of
functions $f:S\times[0,T]\to \R$ which satisfy
\begin{align*}
|f(s_1,t) - f(s_2,t)| & \leq C_1|s_1-s_2|^{\beta_1}, \ s_1,s_2\in S,
\ t\in [0,T],
\\ |f(s,t_1) - f(s,t_2)| & \leq C_2|t_1-t_2|^{\beta_2}, \ s\in S, \
t_1,t_2\in [0,T]
\end{align*}
for certain constants $C_1,C_2\geq 0$. Clearly,
$C^{\beta_1,\beta_2}(\overline{S}\times[0,T])\hookrightarrow
C^{\beta_1\wedge \beta_2}(\overline{S}\times[0,T])$.

\begin{example}\label{ex:Lip1}
Let $(\O,\F,\P)$ be a complete probability space with a filtration $(\F_t)_{t\in
[0,T]}$. Consider
\begin{equation}\label{neumann}
\begin{aligned}
d u(t,s) &= A(t, s, D) u(t, s) + f(t,s, u(t,s)) \, dt \\ &\qquad \qquad
 \qquad + g(t,s, u(t,s)) \, d W(t,s), \ \ t\in (0,T], s\in S,
\\ C(t,s,D) u(t,s) &= 0, \ \ t\in (0,T], s\in \partial S
\\ u(0, s) &= u_0(s), \ \ s\in S.
\end{aligned}
\end{equation}
Here $S$ is a bounded domain with boundary of class $C^2$ and outer normal
vector $n(s)$ in $\R^n$, and
\begin{align*}
A(t,s,D) &= \sum_{i,j=1}^n D_i \Big( a_{ij}(t,s) D_j \Big) + a_0(t,s),
\\  C(t,s,D) &= \sum_{i,j=1}^n a_{ij}(t,s) n_i(s) D_j.
\end{align*}
We assume that the coefficients are real and satisfy
\begin{align*}
a_{ij} &\in C^{\mu}([0,T];C(\overline{S})), \ a_{ij}(t, \cdot)\in
C^1(\overline{S}), \ D_k a_{ij}\in C([0,T]\times\overline{S}),
\\ a_0&\in C^{\mu}([0,T],L^n(S))\cap C([0,T];C(\overline{S}))
\end{align*}
for $i, j, k=1, \ldots, n$, $t\in [0,T]$, and a constant
$\mu\in(\tfrac{1}{2},1]$. Furthermore, let $(a_{ij})$ be symmetric
and assume that there exists a $\kappa>0$ such that
\begin{equation}\label{unifell}
\sum_{i,j=1}^n a_{ij}(t,s) \xi_i \xi_j \ge \kappa |\xi|^2, \ \ s \in
\overline{S}, t \in [0,T], \xi \in \R^n.
\end{equation}

Let $f,g:[0,T]\times\O\times S\times\R\to \R$ be measurable, adapted and
Lipschitz functions with linear growth uniformly $\O\times[0,T]\times S$, i.e.\
there exist $L_f,C_f,L_g,C_g$ such that for all $t\in [0,T]$, $\omega\in\O$,
$s\in \R$ and $x,y\in\R$,
\begin{eqnarray}\label{eq:Lipf}
|f(t,\omega,s,x) - f(t,\omega,s,y)|&\leq &L_f |x-y|,
\\ \label{eq:linf}  |f(t,\omega,s,x)| & \leq & C_f
(1+|x|),
\\ \label{eq:Lipg}
|g(t,\omega,s,x) - g(t,\omega,s,y)| & \leq & L_g |x-y|,
\\ \label{eq:ling} |g(t,\omega,s,x)| & \leq & C_g
(1+|x|).
\end{eqnarray}

The noise term $W$ is an $L^2(S)$-valued Brownian motion with respect to
$(\F_t)_{t\in [0,T]}$. We assume that it has a covariance $Q\in \calL(L^2(S))$
which satisfies
\begin{equation}\label{eq:condCov}
\sqrt{Q}\in \calL(L^2(S),L^\infty(S)).
\end{equation}

The following statements hold:
\begin{enumerate}
\item Let $p\in [2,\infty)$. If $u_0\in L^p(S)$ a.s., then there exists a
unique mild and variational solution $u$ of \eqref{neumann} with paths in
$C([0,T];L^p(S))$ a.s. Moreover, $u\in L^2(0,T;W^{1,2}(S))$ a.s.

\item If $u_0\in C^{1}(\overline{S})$ a.s., then the solution $u$ is in
$C^{\lambda}([0,T];C^{2\delta}(S))$ for all $\lambda,\delta>0$ such that
$\lambda+\delta<\frac12$. In particular, $u\in
C^{\beta_1,\beta_2}(\overline{S}\times [0,T])$  for all $\beta_1\in (0,1)$ and
$\beta_2\in (0,\frac12)$.
\end{enumerate}

If in (1) $u_0\in L^r(\O;L^p(S))$ for some $r\in (2, \infty)$, then
also
\[\E \sup_{t\in [0,T]}\|u(t)\|_{L^p(S)}^r\lesssim \E\|u_0\|_{L^p(S)}^r.\]
\end{example}

This example improves \cite[Theorem 3]{SSV} in several ways:
\begin{remark}\label{rem:compSSV}
The assumptions on the coefficients $a_{ij}$ and the domain $S$ we
have made are weaker than the ones in \cite[page 705]{SSV}. The
initial value in \cite{SSV} is assumed to be more regular than ours
(i.e.\ $u_0\in C^{2+\alpha}(\overline{S})$ instead of
$C^{1}(\overline{S})$) and it has to fulfill the Neumann boundary
condition at $t=0$. We consider $f$ and $g$ also depending on
$[0,T]\times \O\times S$. In \cite[Theorem 3]{SSV} the obtained
regularity is $C^{\beta_1,\beta_2}(\overline{S}\times [0,T])$ for
all $\beta_1\in (0,\alpha)$ and $\beta_2\in
(0,\frac{\alpha}{2}\wedge \frac{2}{n+2})$. Here $\alpha\in (0,1)$ is
a parameter which states how regular the coefficients $a_{ij}$ and
the domain $S$ are. Even in the limiting case $\alpha=1$, our time
regularity is better and it does not depend on the dimension
$n$.
\end{remark}

The condition on the noise term in \cite{SSV} is formulated as
\eqref{eq:condcoven} below.
\begin{remark}\label{rem:noiseSSV}
Since $Q$ is compact and positive, we can always find positive
numbers $(\lambda_n)_{n\geq 1}$ and an orthonormal system
$(e_n)_{n\geq 1}$ in $L^2(S)$ with $\sqrt{Q} = \sum_{n\geq
1}\lambda_n e_n \otimes e_n$. It follows that we may decompose $W$
as
\[W(t,s) = \sum_{n\geq 1} \sqrt{\lambda_n} W_n(t) e_n(s).\]
Here $(W_n)_{n\geq 1}$ are independent real-valued standard Brownian
motions.

The condition $\sqrt{Q}\in\calL(L^2(S),L^\infty(S))$ is for instance satisfied
if $(e_n)_{n\geq 1}$ in $L^\infty(S)$ and
\begin{equation}\label{eq:condcoven}
\sum_{n\geq 1} \lambda_n \|e_n\|_{L^\infty(S)}^2<\infty.
\end{equation}
Indeed, for all $h\in L^2(S)$, by the Cauchy-Schwartz inequality
\begin{equation}\label{eq:condcovencheck}
|\sqrt{Q} h(s)| = \Big|\sum_{n\geq 1} \sqrt{\lambda_n} e_n(s) [e_n,h]_{L^2(S)}
\Big|\leq \Big(\sum_{n\geq 1} \lambda_n |e_n(s)|^2\Big)^{\frac12} \,
\|h\|_{L^2(S)}<\infty
\end{equation}
for almost all $s\in S$.
\end{remark}

\begin{proof}[Proof of Example \ref{ex:Lip1}]
Let $E=L^p(S)$ with $p\in [2, \infty)$. Then conditions (AT1) and (AT2) are
satisfied (cf.\ \cite{MR945820, Schn,Ya}). Further, \ref{as:isom} is satisfied
with $\eta_0=1$ and (cf.\ \cite[Theorem 4.3.1.2]{Tr1})
\[\tE_{\eta} :=(L^p(S), W^{2,p}(S))_{\eta,2} =B^{2\eta}_{p,2}(S)\]
for $\eta\in (0,1]$ and $\tE_0 = E$. Note that these spaces are all UMD spaces with type
$2$ as follows from the explanation after \eqref{eq:type2est}.

%
%


Let $F:[0,T]\times\O\times E\to E$ be defined by $F(t,\omega,x)(s) =
f(t,\omega, s, x(s))$. Then $F$ satisfies \ref{as:LipschitzFtype}. Let
$B:[0,T]\times\O \times E\to \g(L^2(S),E)$ be defined as
\[(B(t,\omega,x) h)(s) = b(t,\omega,s,x(s)) (\sqrt{Q} h)(s).\]
This is well-defined by the assumptions and it satisfies
\ref{as:LipschitzBtype}. Indeed, under condition \eqref{eq:condCov}, we obtain
from Lemma \ref{lem:sq-fc-Lp} that for $x\in L^p(S)$,
\[
\|x \sqrt{Q}\|_{\g(L^2(S), L^p(S))}\lesssim_p K \|x\|_{L^p(S)}.
\]
and therefore, for $x_1, x_2\in L^p(S)$,
\begin{align*}
\|B(t,\omega,x_1)- B(t,\omega,x_2)\|_{\g(L^2(S), L^p(S))}& \lesssim_{p} K \|x_1-x_2\|_{L^p(S)},  \  t\in [0,T], \ \omega\in \O,
\\
\|B(t,\omega,x) h\|_{\g(L^2(S),L^p(S))} & \leq K (1+\|x\|_{L^p(S)})
, \ t\in [0,T], \ \omega\in\O.
\end{align*}

By Theorem \ref{thm:mainexistencegeninitial} with $a=\theta_F=\theta_B=0$, we
obtain that there exists a unique mild solution $U$ with paths in $C([0,T];E)$
a.s.

Next we use Proposition \ref{prop:varmild} to show that $U$ is also the unique
variational solution in $C([0,T];E)$. Note that condition \ref{eq:condW} is
satisfied since $A(t)$ is self-adjoint in the sense that $A(t)^*$ on $L^p(S)$
is $A(t)$ on $L^{p'}(S)$. Therefore, (AT2) holds for $A(t)^*$ and thus
\ref{eq:condW} holds by Remark \ref{rem:condW}. The result now follows from
Proposition \ref{prop:varmild}.

We still need to show that $U\in L^2(0,T;H^{1}(S))$ a.s.\ if $u_0\in
L^2(S)$ a.s.\ Let $E=L^2(S)$. It follows from Remark
\ref{rem:Hinfty} that $(A(t))_{t\in [0,T]}$ satisfies ($H^\infty$).
Since $A(t)$ is associated to a quadratic form with $V=W^{1,2}(S)$,
it follows that $D((-A_w(t))^{\frac12})=W^{1,2}(S)$ for $w$ large
enough with constants uniformly in time (cf.\ \cite[Section
2.2]{Ta1}). We have already shown that $U\in C([0,T];E)$ a.s.
Clearly, $B(U)$ is an element of $L^\infty([0,T];\g(L^2(S),E))$ and
by Theorem \ref{thm:maxreg}, $P\diamond B(U)\in L^2(0,T;W^{1,2}(S))$
a.s. For the deterministic convolution, it follows from
\eqref{eq:detconv} that $P*F(U)\in L^2(0,T;W^{1,2}(S))$. Finally, by
\eqref{eq:resVx}
\[\int_0^T \|P(t,0) u_0\|_{W^{1,2}(S)}^2\,ds \lesssim \int_0^T \|(-A_w(t))^{\frac12}P(t,0) u_0\|_{L^2(S)}^2\,dt\lesssim \|u_0\|_{L^2(S)}^2.\]
This completes the proof.

(2): \ Let $E=L^p(S)$ for $p\in[2,\infty)$. If $u_0\in C^1(\overline{S})$ a.s.,
then we claim that $u_0\in E_{b}^0$ a.s. for all $b\in [0,\frac12)$. Indeed, it
suffices to show that $u_0\in [E,D(A(0))]_{\frac12}$. By \cite[Theorem 7.2 and Remark 7.3]{Amnonhom} and \cite[Theorem 2.3]{DDHPV} (also see Example \ref{ex:boundary}), one has
\[[E,D(A(0))]_{\frac12} = [L^p(S), W^{2,p}(S)]_{\frac12} = W^{1,p}(S).\]
Since $C^{1}(\overline{S})\hookrightarrow W^{1,p}(S)$, the claim follows.

By Theorem \ref{thm:mainexistencegeninitial} the process $U$ has the following
regularity property: $U\in C^\lambda([0,T];E_{\delta})$ a.s.\ for all
$\lambda,\delta>0$ such that $\lambda+\delta<\frac12$. In particular taking $p$
large it follows from \cite[Theorem 4.6.1(e)]{Tr1} that $U\in
C^{\lambda}([0,T];C^{2\delta}(S))$ for all $\lambda,\delta>0$ such that
$\lambda+\delta<\frac12$.

The final assertion follows from
\eqref{eq:mainthmestimateholdertype}.
\end{proof}

Let us show that the variational solution of Example \ref{ex:Lip1} is also a
variational solution of the second type as defined in \cite{SSV}.
\begin{remark}\label{rem:variationex}
The variational solution of Example \ref{eq:condCov} satisfies: for all $t\in
(0,T]$, $\varphi\in C^{1}([0,t];L^2(S))$ such that $A(r,\cdot ,D) \varphi\in
C^{1}([0,t];L^2(S))$, a.s.
\begin{align*}
\int_S u(t,s)& \varphi(t,s) \, ds - \int_S u_0(s) \varphi(0,s) \, ds \\ & =
\int_0^t \int_S u(r,s) \varphi'(r,s) \, ds \, dr+ \int_0^t \int_S u(r,s)
A(r,s,D) \varphi(r,s) \, ds \, dr  \\ & \qquad +  \int_0^t \int_S  f(r,s,u(r,s)) \varphi(r,s) \, ds \, dr \\
& \qquad + \sum_{n\geq 1} \int_0^t \int_S b(r,s,u(r,s)) e_n(s) \varphi(r,s) \,
d W_n(r).
\end{align*}
Therefore, by integration by parts and approximation it follows that for all
$t\in (0,T]$, $\varphi\in W^{1,2}((0,t)\times S)$, a.s.
\begin{align*}
\int_S &u(t,s) \varphi(t,s) \, ds - \int_S u_0(s) \varphi(0,s) \, ds \\ & =
\int_0^t \int_S u(r,s) \varphi'(r,s) \, ds \, dr- \int_0^t \int_S \lb \nabla
u(r,s), a(r,s) \nabla \varphi(r,s)\rb_{\R^n} \, ds \, dr  \\ & \qquad +  \int_0^t \int_S  f(r,s,u(r,s)) \varphi(r,s) \, ds \, dr \\
& \qquad + \sum_{n\geq 1} \int_0^t \int_S b(r,s,u(r,s)) \sqrt{Q} e_n(s)
\varphi(r,s) \, d W_n(r).
\end{align*}
This coincides with the variational solution of the second kind from
\cite{SSV}.

\end{remark}

In the next example we will weaken the assumption on the covariance $Q$.

\begin{example}\label{ex:Lip2}
Consider equation \eqref{neumann} again. Assume the same conditions as in
Example \ref{ex:Lip1}, but with \eqref{eq:condCov} replaced by: there exist
$\beta\in (0,\frac12)$ and $q\in (\frac{n}{1-2\beta},\infty)$
\begin{equation}\label{eq:condCovb}
\sqrt{Q}\in \calL(L^2(S),L^q(S)).
\end{equation}

The following statements hold:
\begin{enumerate}
\item Let $p\in [2,\infty)$ be such that $p>(n^{-1}-q^{-1})^{-1}$. If $u_0\in
L^p(S)$ a.s., then there exists a unique mild and variational solution $u$ of
\eqref{neumann} with paths in $C([0,T];L^p(S))$ a.s.


\item If $u_0\in C^{1}(\overline{S})$ a.s., then the solution $u$ is in
$C^{\lambda}([0,T];C^{2\delta}(S))$ for all $\lambda,\delta>0$ such that
$\lambda+\delta<\beta$. In particular, $u\in
C^{\beta_1,\beta_2}(\overline{S}\times [0,T])$  for all $\beta_1\in (0,2\beta)$
and $\beta_2\in (0,\beta)$.
\end{enumerate}

\end{example}

This example improves \cite[Theorem 4]{SSV} in similar ways as
explained in Remark \ref{rem:compSSV}. Their condition on the noise
term is formulated as \eqref{eq:condcoven2} below.

\begin{remark}
Assume that $Q$ is compact and has the same form as in Remark
\ref{rem:noiseSSV}. The condition $\sqrt{Q}\in\calL(L^2(S),L^q(S))$
is for instance satisfied if $(e_n)_{n\geq 1}$ in $L^q(S)$ and
\begin{equation}\label{eq:condcoven2}
\sum_{n\geq 1} \lambda_n \|e_n\|_{L^q(S)}^2<\infty.
\end{equation}
Indeed, without loss of generality we may assume that $q>2$. Taking the
$L^{q}(S)$ norm on both sides in \eqref{eq:condcovencheck} yields
\begin{align*}
\|\sqrt{Q} h\|_{L^q(S)} &\leq \Big \|\Big(\sum_{n\geq 1} \lambda_n
|e_n(s)|^2\Big)^{\frac12}\Big\|_{L^q(S)} \, \|h\|_{L^2(S)} \\ & \leq
\Big(\sum_{n\geq 1} \lambda_n \|e_n\|_{L^q(S)}^2\Big)^{\frac12} \|h\|_{L^2(S)}
<\infty.
\end{align*}
\end{remark}

\begin{remark}
We should note that it is stated in \cite[Theorem 4 with
$\alpha=1$]{SSV} that the space regularity of the solution becomes
$C^{\sigma}(S)$ for all $\sigma<1$. We could not follow this
argument. It seems that for the definition of $Y_{\delta}$ in
\cite[Lemma 4]{SSV} one has restrictions on their parameter $\delta$
in terms of the $\beta$ from \eqref{eq:condCovb}.

For example consider the case that $S=(0,1)$, $A= \frac{d^2}{d s^2}$ with
Neumann boundary conditions, $f=0$, $b(x) = x$ and the noise is of the form
$W(t,x) = e_1(x) W_1(t)$, where $e_1\in L^q(S)$ and $W_1$ is a standard
Brownian motion. We do not believe that the solution has space regularity
$C^{\sigma}(S)$ for all $\sigma<1$, in general.
\end{remark}

\begin{proof}[Proof of Example \ref{ex:Lip2}]
We proceed as in Example \ref{ex:Lip1} but due to \eqref{eq:condcoven2} we need
to take $\theta_B>0$.

(1): \ Let $E=L^p(S)$. Since $Q\in \calL(L^2(S))$ we can assume that
$q\geq 2$. Let $r\in (1,\infty)$ be such that
$r(\frac{1}{p}+\frac{1}{q})=1$. Let $\theta_B\in
(\frac{n}{2r},\frac12)$. This is possible by the restriction on $p$.

Let $w\in \R$ be so large that $\lambda\in \rho(A_w)$ for all
$\text{Re}(\lambda)\leq 0$. We claim that for $x\in L^p(S)$ and
$h\in L^2(S)$,
\[\|(-A_w(t))^{-\theta_B} x \sqrt{Q} h \|_{L^\infty(S)} \lesssim \|x\|_{L^p(S)} \|h\|_{L^2(S)} \]
with constants uniformly in $t\in [0,T]$. Indeed, fix $\theta_B'\in
(\frac{n}{2r},\theta_B)$. By \cite[Theorem 4.6.1(e)]{Tr1} it follows that
\[\|y\|_{L^\infty(S)} \lesssim \|y\|_{B^{2\theta_B'}_{r,2}(S)}, \ \ y\in B^{2\theta_B}_{r,2}(S).\]
Moreover,
\[D((-A_w(t))^\theta_B) \hookrightarrow (L^r(S),D(A(t)))_{\theta_B',2}\hookrightarrow (L^r(S),W^{2,r}(S))_{\theta_B',2} =  B^{2\theta_B'}_{r,2}(S)\]
with embedding constants independent of $t\in [0,T]$. Here $D(A(t))$ stands
for the domain of $A(t)$ in $L^r(S)$ and similarly for the fractional domain
space. Therefore,
\begin{equation}\label{eq:embtheta}
\|A^{-\theta_B}(t) y\|_{L^\infty(S)}\lesssim \|y\|_{L^r(S)}, \ y\in L^r(S).
\end{equation}
From this and H\"older's inequality we obtain that
\begin{align*}
\|(-A_w(t))^{-\theta_B} x \sqrt{Q} h \|_{L^\infty(S)} & \lesssim \|x \sqrt{Q} h
\|_{L^r(S)}
\\ & \leq \|\sqrt{Q}\|_{\calL(L^2(S), L^q(S))} \|x\|_{L^p(S)} \|h
\|_{L^2(S)}.
\end{align*}

The claim and Lemma \ref{lem:sq-fc-Lp} imply that
\[\|(-A_w(t))^{-\theta_B} x \sqrt{Q}\|_{\g(L^2(S),L^p(S))} \lesssim \|\sqrt{Q}\|_{\calL(L^2(S), L^q(S))} \|x\|_{L^p(S)}.\]
It follows that there exists a constant $K$ such that for all
$x,y\in L^p(S)$ and for all $t\in [0,T], \omega\in\O$,
\[\|(-A_w(t))^{-\theta_B} (B(t,\omega,x) - B(t,\omega,y))\|_{\g(L^2(S),L^p(S))} \leq K \|x-y\|_{L^p(S)},\]
\[\|(-A_w(t))^{-\theta_B} B(t,\omega,x)\|_{\g(L^2(S),L^p(S))} \leq K (1+\|x\|_{L^p(S)}).\]

By Theorem \ref{thm:mainexistencegeninitial} (1) we obtain that there exists a
unique mild solution $u$ with paths in $C([0,T];L^p(S))$. The fact that $u$ is
also the unique variational solution follows in the same way as Example
\ref{ex:Lip1}.


(2): \ Let $\lambda,\delta>0$ be such that $\lambda+\delta<\beta$. Let
$\delta,\lambda>0$ be such that $\delta+\lambda<\beta$. Let $\delta'>\delta$ be
such that $\delta'+\lambda<\beta$. Choose $p\in [2,\infty)$ so large and
$\theta_B>\frac{n}{2r} = \frac{n}{2}(\frac1p+\frac1q)$ such that $\beta<\frac12
-\theta_B$. 

As in Example \ref{ex:Lip1} one has $u_0\in E_{\delta'+\lambda}^0$. By Theorem
\ref{thm:mainexistencegeninitial} (2) we obtain that $u$ has a version with
paths in $C^{\lambda}([0,T];B^{2\delta'}_{p,2}(S))$. By \cite[Theorem
4.6.1(e)]{Tr1} $B^{2\delta'}_{p,2}(S)\hookrightarrow C^{2\delta''-\frac{n}{p}}$
where $\delta<\delta''<\delta'$. Choosing $p$ large enough gives the result.
\end{proof}

As a final example we consider again \eqref{neumann}, but this time
with locally Lipschitz coefficients $f$ and $b$.

\begin{example}\label{ex:loclLip}
Consider equation \eqref{neumann}. Assume that $f,g:[0,T]\times\O\times
S\times\R\to \R$ are measurable, adapted and $f$ and $g$ are locally Lipschitz
in the fourth variable uniform in the others, i.e.\ for all $R>0$, there exists
$L_{f,R}$ and $L_{g,R}$ such that for all $t\in [0,T]$, $\omega\in\O$, $s\in
\R$ and $x,y\in\R$ with $|x|,|y|\leq R$,
\begin{eqnarray}\label{eq:Lipfloc}
|f(t,\omega,s,x) - f(t,\omega,s,y)|&\leq &L_{f,R} |x-y|, \ t\in [0,T],\omega\in
\O,s\in S,
\\ \label{eq:Lipgloc}
|g(t,\omega,s,x) - g(t,\omega,s,y)| & \leq & L_{g,R} |x-y|, \ t\in
[0,T],\omega\in \O,s\in S.
\end{eqnarray}
Assume that $A,C$ and $Q$ are as in Example \ref{ex:Lip1}. The following
statements hold:
\begin{enumerate}
\item Let $p\in (2n,\infty)$. Let $a\in (\frac{n}{p},\frac12)$. If $u_0\in
B^a_{p,p}(S)$ a.s., then there exists a unique maximal local mild solution
$(u(t))_{t\in [0,\tau)}$ of \eqref{neumann} with paths in
$C([0,\tau);B^{2a}_{p,p}(S))$ a.s.

\item If $u_0\in C^{1}(\overline{S})$ a.s., then the solution $u$ is in
$C^{\lambda}([0,T];C^{2\delta}(S))$ for all $\lambda,\delta>0$ such that
$\lambda+\delta<\frac12$. In particular, $u\in
C^{\beta_1,\beta_2}(\overline{S}\times [0,T])$  for all $\beta_1\in (0,1)$ and
$\beta_2\in (0,\frac12)$.
\end{enumerate}

If $f$ and $g$ are also of linear growth, i.e.\ \eqref{eq:linf} and
\eqref{eq:ling}, then the following hold:
\begin{enumerate}
\item[(1)$'$] Let $p\in (2n,\infty)$. Let $a\in (\frac{n}{p},\frac12)$. If
$u_0\in B^a_{p,p}(S)$ a.s., then $\tau=T$ and the solution $u$ from above is
the unique global mild and variational solution of \eqref{neumann} with paths
in $C([0,T];B^a_{p,p}(S))$ a.s.

\item[(2)$'$] If $u_0\in C^{1}(\overline{S})$ a.s., then the solution $u$ is in
$C^{\lambda}([0,T];C^{2\delta}(S))$ for all $\lambda,\delta>0$ such that
$\lambda+\delta<\frac12$. In particular, $u\in
C^{\beta_1,\beta_2}(\overline{S}\times [0,T])$  for all $\beta_1\in (0,1)$ and
$\beta_2\in (0,\frac12)$.
\end{enumerate}
\end{example}

\begin{remark} \
\begin{enumerate}
\item If $Q$ is as in Example \ref{ex:Lip2}, then one can still give
conditions under which existence, uniqueness and regularity hold.
This is left to the reader.

\item It is an interesting question under what conditions on $f$ and
$g$ different as \eqref{eq:linf} and \eqref{eq:ling}, one still
obtains a global solution. There are many results and approaches in
this direction. We refer the reader to \cite{MZ} and references
therein. We believe it is important to extend the ideas from
\cite{MZ} to our general framework. This could lead to
new global existence results.
\end{enumerate}
\end{remark}

We turn to the proof of Example \ref{ex:loclLip}. The set-up is
similar as in Example \ref{ex:Lip1}, but we need that $a>0$ to be
able to consider the locally Lipschitz coefficients $f$ and $b$.
Here $a$ is the parameter from Theorem
\ref{thm:mainexistenceLocallyLipschitzCasetype} which is used for
the underlying space $\tE_a$. The main reason we want $a>0$ is that
$\tE_a\hookrightarrow C(\overline{S})$ is needed.

\begin{proof}[Proof of Example \ref{ex:loclLip}]

(1):\ By \cite[Theorem 4.6.1(e)]{Tr1} it follows that
$\tE_a\hookrightarrow C(\overline{S})$ since $a>\frac{n}{p}$. Let
$E$ and $A$ be as in Example \ref{ex:Lip1}. For $0< \eta\leq 1$ let
\begin{equation}\label{eq:tEtapp}
\tE_{\eta} :=(L^p(S), W^{2,p}(S))_{\eta,p} =B^{2\eta}_{p,p}(S).
\end{equation}
It follows from \cite[Theorem 7.2 and Remark 7.3]{Amnonhom} and \cite[Theorem 2.3]{DDHPV} that for $2\eta\neq \frac1p$,
\begin{equation}\label{eq:tEtappass}
E_{\eta}^t := (E, D(A(t)))_{\eta, p} \hookrightarrow \tE_{\eta}\hookrightarrow
E
\end{equation}
with uniform constants in $t\in[0,T]$. Therefore, the version of
\ref{as:isom} explained below \ref{as:isom1a} on page
\pageref{as:isom1a} is satisfied except maybe for $2\eta = \frac1p$,
but this is not an actual problem since we can always take $\eta$
slightly larger in the above arguments. Note that by
\eqref{eq:tEtapp} and \eqref{eq:tEtappass}, $u_0\in E_a^0$ a.s.

Define $F:[0,T]\times\O\times \tE_a\to E$ by $F(t,\omega,x)(s) =
f(t,\omega, s, x(s))$. By \eqref{eq:Lipf} and $\tE_a\hookrightarrow
C(\overline{S})$, $F$ satisfies \ref{as:LipschitzFtypeloc}. Let
$B:[0,T]\times\O \times \tE_a\to \g(L^2(S),E)$ be defined as
\[(B(t,\omega,x) h)(s) = b(t,\omega,s,x(s)) (\sqrt{Q} h)(s).\]
By \eqref{eq:Lipg}, $\tE_a\hookrightarrow C(\overline{S})$, and the assumptions this is
well-defined and it satisfies \ref{as:LipschitzBtypeloc}.

By Theorem \ref{thm:mainexistenceLocallyLipschitzCasetype} with
$\theta_F=\theta_B=0$, we obtain that there exists a unique mild solution $U$
with paths in $C([0,\tau);\tE_a)$ a.s.

(2): \ Let $\lambda,\delta>0$ be such that $\lambda+\delta<\frac12$. Let $a>0$
be such that $\lambda+\delta+a<\frac12$ and let $p\in [2, \infty)$ be such that
$a>\frac{n}{p}$. Let $E$ and $\tE_a$ and $F,B$ etc.\ be as in (1). If $u_0\in
C^1(\overline{S})$ a.s., then as before one can show that $u_0\in E_{b}^0$ a.s.
for all $b\in [0,\frac12)$.


By Theorem \ref{thm:mainexistenceLocallyLipschitzCasetype} the process $U$ has
the following regularity property: $U\in C^\lambda([0,T];\tE_{a+\delta})$ a.s. In
particular it follows from \cite[Theorem 4.6.1(e)]{Tr1} that $U\in
C^{\lambda}([0,\tau);C^{2\delta}(S))$ for all $\lambda,\delta>0$ such that
$\lambda+\delta<\frac12$.

(1)$'$ and (2)$'$: \ This can be proved in the same way as (1) and (2), but now
using the linear growth assumption and the last part of Theorem
\ref{thm:mainexistenceLocallyLipschitzCasetype}.
\end{proof}

\appendix

\section{Technical proofs\label{sec:techproofs}}

Below we prove Proposition \ref{prop:varmild}. We recall it for convenience.

\begin{proposition}\label{prop:varmildapp}
Assume (AT), \ref{as:isom}, \ref{as:LipschitzFtype}, \ref{as:LipschitzBtype}
and \ref{eq:condW}. Let $r\in (2,\infty)$ be such that
$\theta_B<\frac12-\frac1r$. Let $U:[0,T]\times\O\to \tE_a$ be strongly measurable
and adapted and such that $U\in L^r(0,T;\tE_a)$ a.s. The following assertions are
equivalent:
\begin{enumerate}
\item $E$ is a mild solution of \eqref{SEtype}.

\item $U$ is a variational solution of \eqref{SEtype}.
\end{enumerate}
\end{proposition}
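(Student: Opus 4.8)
The plan is to prove the two implications separately; since the standing hypothesis gives $U\in L^r(0,T;\tE_a)$ a.s.\ and $L^r(0,T)\embed L^2(0,T)$, the integrability conditions in Definitions \ref{def:var} and \ref{def:mild} are met in both directions, so only the two integral identities have to be matched. The technical heart of both directions is the following deterministic identity: for $0\le r\le t$, $\varphi\in\Gamma_t$ and $y\in E$,
\begin{equation*}
\lb P(t,r)y,\varphi(t)\rb - \lb y,\varphi(r)\rb = \int_r^t \lb P(\sigma,r)y,\, \varphi'(\sigma)+A(\sigma)^*\varphi(\sigma)\rb\, d\sigma .
\end{equation*}
I would derive it by observing that $\sigma\mapsto \lb P(\sigma,r)y,\varphi(\sigma)\rb$ is $C^1$ on $(r,t]$ with derivative $\lb P(\sigma,r)y,\, A(\sigma)^*\varphi(\sigma)+\varphi'(\sigma)\rb$ (here one rewrites $\lb A(\sigma)P(\sigma,r)y,\varphi(\sigma)\rb=\lb P(\sigma,r)y,A(\sigma)^*\varphi(\sigma)\rb$, which stays uniformly bounded as $\sigma\da r$ even though $A(\sigma)P(\sigma,r)y$ does not), while $\lb P(\sigma,r)y,\varphi(\sigma)\rb\to\lb y,\varphi(r)\rb$; the fundamental theorem of calculus then gives the identity. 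Using \eqref{eq:2_15} and the H\"older continuity of $(-A_w(\cdot)^*)^{-1+\theta}$ one upgrades it to $y\in E_{-\theta_F}^r$ and to $\g(H,E_{-\theta_B}^r)$-valued kernels, with $\lb y,\varphi(r)\rb$ read as $\lb (-A_w(r))^{-\theta}y, (-A_w(r)^*)^{\theta}\varphi(r)\rb$, at the cost of an integrable singularity $(\sigma-r)^{-\theta}$.

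For (1)$\Ra$(2) I would fix $t$ and $\varphi\in\Gamma_t$, substitute $U(s)=P(s,0)u_0+P*F(\cdot,U)(s)+P\diamond B(\cdot,U)(s)$ into the two sides of \eqref{varsol}, and match contributions source by source. The $u_0$-part is exactly the identity above at $r=0$ (the boundary value being $\lb u_0,\varphi(0)\rb$ by continuity of $P(\cdot,0)u_0$). The $F$-part follows by applying the extrapolation version of the identity with $y=F(r,U(r))$, integrating over $r\in(0,t)$, and interchanging the order of integration by Fubini's theorem (licit since $\theta_F<1$), using $\lb P*F(\cdot,U)(\sigma),\psi\rb=\int_0^\sigma\lb P(\sigma,r)F(r,U(r)),\psi\rb\,dr$. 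The $B$-part follows by first writing $\lb P\diamond B(\cdot,U)(\sigma),\psi\rb=\int_0^\sigma B(r,U(r))^* P(\sigma,r)^*\psi\,dW_H(r)$ via the adjoint identity recalled at the end of Section \ref{sec:stochint}, then invoking the stochastic Fubini theorem (cf.\ \cite{DPKZ}) together with the dual form $\int_r^t P(\sigma,r)^*(\varphi'(\sigma)+A(\sigma)^*\varphi(\sigma))\,d\sigma=P(t,r)^*\varphi(t)-\varphi(r)$ of the identity to collect $\int_0^t B(r,U(r))^*\varphi(r)\,dW_H(r)$ together with its drift companion. Summing the three parts reproduces \eqref{varsol}.

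For (2)$\Ra$(1) I would, for fixed $t$, use \ref{eq:condW}: for $x^*$ in the $\sigma(E^*,E)$-sequentially dense subspace $\Upsilon_t$ the test function $\varphi(s):=P(t,s)^*x^*$ lies in $\Gamma_t$ and satisfies \eqref{varphiMinA}, so the term $\int_0^t\lb U(s),\varphi'(s)+A(s)^*\varphi(s)\rb\,ds$ in \eqref{varsol} vanishes and what remains is, a.s.,
\begin{equation*}
\lb U(t),x^*\rb = \lb u_0,P(t,0)^*x^*\rb + \int_0^t\lb F(s,U(s)),P(t,s)^*x^*\rb\,ds + \int_0^t B(s,U(s))^*P(t,s)^*x^*\,dW_H(s).
\end{equation*}
By Fubini for the drift integral and the adjoint identity for the stochastic integral (with the extrapolation interpretations above) the right-hand side equals $\lb P(t,0)u_0+P*F(\cdot,U)(t)+P\diamond B(\cdot,U)(t),x^*\rb$. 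Running over a countable $\sigma(E^*,E)$-sequentially dense subset of $\Upsilon_t$ and discarding one null set, I would conclude $U(t)=P(t,0)u_0+P*F(\cdot,U)(t)+P\diamond B(\cdot,U)(t)$ in $E$ a.s.; since $t$ is arbitrary and $U\in L^r(0,T;\tE_a)$ a.s.\ by hypothesis, $U$ is a mild solution.

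The step I expect to be the main obstacle is making the deterministic identity and its (stochastic) Fubini counterpart rigorous in the presence of the extrapolation spaces: one must verify that $P(\sigma,r)\phi$ with $\phi\in E_{-\theta_F}^r$ genuinely lands in $D(A(\sigma))$ for $\sigma>r$, that every pairing against $A(\sigma)^*\varphi(\sigma)$ is well-defined and bounded (this is where one needs $(-A_w(s)^*)^{\theta}\varphi(s)\in C([0,t];E^*)$, i.e.\ the H\"older regularity of $(-A_w(\cdot)^*)^{-1+\theta}$ from \cite{Schn}), and that the ordinary and stochastic Fubini theorems apply despite the kernels $(\sigma-r)^{-\theta_F}$ and $(\sigma-r)^{-\theta_B}$ — which is precisely what forces $\theta_F<1$ and $\theta_B<\tfrac12-\tfrac1r$. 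A minor, purely measure-theoretic point in (2)$\Ra$(1) is the passage from ``$\lb\,\cdot\,,x^*\rb=0$ a.s.\ for each $x^*$ in a $\sigma(E^*,E)$-sequentially dense set'' to a.s.\ equality of the two $E$-valued random variables, handled via a countable norming sequence.
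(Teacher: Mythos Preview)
Your proposal is correct and follows essentially the same route as the paper: the key deterministic identity you isolate is precisely the paper's equation (weak1), its extension to the extrapolation spaces is (weak1a)/(weak2), and both directions are then handled exactly as you describe---substitution plus (stochastic) Fubini for $(1)\Rightarrow(2)$, and the test function $\varphi(s)=P(t,s)^*x^*$ from \ref{eq:condW} for $(2)\Rightarrow(1)$. The only organizational difference is that the paper computes $\int_0^t\lb U(s),A(s)^*\varphi(s)\rb\,ds$ first and rearranges, whereas you match source by source; the content is the same.
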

Condition \ref{eq:condW} from page \pageref{eq:condW} is only needed
in $(2)\Rightarrow (1)$.

\begin{proof}
$(1)\Rightarrow (2)$: \ Let
\[F_{-\theta_F}(r,x) = (-A_w(r))^{-\theta_F} F(r,x), \ \ B_{-\theta_B}(r,x) = (-A_w(r))^{-\theta_B} B(r,x)\]
and $P_{\theta}(t,r) = P(t,r) (-A_w(r))^{\theta}$ for $\theta = \theta_F$ or
$\theta = \theta_B$.

Let $t\in [0,T]$ be arbitrary and $\varphi\in \Gamma_t$. Since $U$ is a.s.\ in
$L^1(0,T;E)$ we have that $s\mapsto \lb U(s), A(s)^* \varphi(s) \rb$ is
integrable and from the definition of a mild solution we obtain that a.s.,
{\small
\begin{equation}\label{consmildsol}
  \begin{aligned}
    & \int_0^t \lb U(s), A(s)^*  \varphi(s)\rb \, ds \\ & = \int_0^t \lb P(s,0) u_0,
    A(s)^*\varphi(s)\rb \, ds
    + \int_0^t \int_0^s \lb P_{\theta_F}(s,r) F_{-\theta_F}(r,U(r)), A(s)^*
    \varphi(s) \rb \, d r \, ds
    \\ &  +  \int_0^t \int_0^s B_{-\theta_B}(r,U(r))^* P_{\theta_B}(s,r)^* A(s)^*
    \varphi(s) \, d W_H(r) \, ds.
  \end{aligned}
\end{equation}}

Since $(P(t,s))_{0\leq s\leq t\leq T}$ is an evolution family that solves
\eqref{nCP},
it follows from an approximation argument that for all $x\in E$ and $0\leq
r\leq t\leq T$,
\begin{equation}\label{weak1}
\begin{aligned}
  \lb P(t,r)  x, \varphi(t)\rb - & \lb x, \varphi(r)\rb \\ & = \int_r^t \lb P(s,r)
  x,A(s)^*\varphi(s)\rb \, ds + \int_r^t \lb P(s,r)x, \varphi'(s)\rb \, ds.
\end{aligned}
\end{equation}
Therefore, by another approximation argument we obtain that for all $\theta\in
[0,1)$ and for all $x\in E$ and $0\leq r\leq t$,
\begin{equation}\label{weak1a}
\begin{aligned}
  \lb P_{\theta}(t,r) &x, \varphi(t)\rb - \lb x, ((-A_w(r))^{\theta})^* \varphi(r)\rb \\ & = \int_r^t \lb
  P_{\theta}(s,r)
  x,A(s)^*\varphi(s)\rb \, ds + \int_r^t \lb P_{\theta}(s,r) x, \varphi'(s)\rb \, ds.
\end{aligned}
\end{equation}
As a consequence one obtains that for all $R\in \calL(H,E)$ and $0\leq r\leq
t$,
\begin{equation}\label{weak2}
\begin{aligned}
  R^* P_{\theta_B}&(t,r)^* \varphi(t)-  R^* ((-A_w(r))^{\theta_B})^* \varphi(r) \\ & = \int_r^t R^*
  P_{\theta_B}(s,r)^*
  A(s)^*\varphi(s) \, ds + \int_r^t R^* P_{\theta_B}(s,r)^* \varphi'(s) \, ds.
\end{aligned}
\end{equation}
Indeed, this follows from \eqref{weak1a} by applying $h\in H$ on both sides.

By the Fubini theorem and \eqref{weak1a} we obtain a.s.,
{\small
\begin{align*}
\int_0^t & \int_0^s \lb P_{\theta_F}(s,r) F_{-\theta_F}(r,U(r)), A(s)^*
\varphi(s) \rb \, dr \, ds
\\ & = \int_0^t \lb P_{\theta_F}(t,r) F_{-\theta_F}(r,U(r)), \varphi(t) \rb \, dr - \int_0^t \lb F_{-\theta_F}(r, U(r)), ((-A_w(r))^{\theta_F})^*\varphi(r) \rb \, dr
\\ & \qquad - \int_0^t \int_0^s \lb P_{\theta_F}(s,r) F_{-\theta_F}(r,U(r)), \varphi'(s) \rb \, dr \, ds.
\end{align*}}
By the stochastic Fubini theorem and \eqref{weak2} we obtain that a.s.,
{\small
\begin{align*}
\int_0^t &\int_0^s B_{-\theta_B}(r,U(r))^*
P_{\theta_B}(s,r)^* A(s)^* \varphi(s) \, d W_H(r) \, ds =
\\ & = \int_0^t  B_{-\theta_B}(r,U(r))^* P_{\theta_B}(t,r)^* \varphi(t) \, d W_H(r) \\ & \qquad - \int_0^t
B_{-\theta_B}(r,U(r))^* ((-A_w(r))^{\theta_B})^*\varphi(r) \, d W_H(r) \\
& \qquad - \int_0^t \int_r^t B_{-\theta_B}(r,U(r))^* P_{\theta_B}(s,r)^*
\varphi'(s) \, d W_H(r) \, ds.
\end{align*}}
Therefore, it follows from \eqref{consmildsol}, \eqref{weak1} and the
definition of a mild solution that
\begin{align*}
\int_0^t \lb U(s), &A(s)^* \varphi(s)\rb \, ds =  \lb U(t),
\varphi(t) \rb - \int_0^t \lb U(s), \varphi'(s) \rb \, ds- \lb u_0,
\varphi(0)\rb \\ & \quad - \int_0^t  \lb F_{-\theta_F}(r,U(r)),
((-A_w(r))^{\theta_F})^*\varphi(r)\rb \, d r \\ & \quad - \int_0^t
B_{-\theta_B}(r,U(r)) ((-A_w(r))^{\theta_B})^* \varphi(r) \, d W_H(r)
\end{align*}
and we obtain that $U$ is a variational solution.

$(2)\Rightarrow (1)$: \
Let $t\in [0,T]$ be arbitrary. We show that for all $x^*\in \Upsilon_t$, a.s.
\begin{equation}\label{mildsol}
\begin{aligned}
\lb U(t), x^* \rb & = \lb P(t,0) u_0, x^*\rb + \int_0^t \lb P_{\theta_F}(t,s)
F_{-\theta_F}(s,U(s)), x^*\rb \,d s
\\ &  + \int_0^t B_{-\theta_B}(s, U(s)))^* P_{\theta_B}(t,s)^{*} x^* \,dW_H(s).
\end{aligned}
\end{equation}
By the existence of the integral, the existence of the stochastic integral, the
weak$^*$-sequential density of $\Upsilon_t$ (see \ref{eq:condW} and the
Hahn-Banach theorem this suffices. For $x^*\in \Upsilon_t$, let $\varphi(s) =
P(t,s)^* x^*$. Then it follows from \eqref{varsol} and \eqref{varphiMinA} that
\begin{align*}
  \lb & U(t), x^*\rb - \lb P(t,0) u_0, x^*\rb + \int_0^t \lb U(s), A(s)^*
  P(t,s)^*x^* \rb \, ds \\ & = \int_0^t \lb U(s), A(s)^*P(t,s)^*x^* \rb \, ds +
  \int_0^t \lb F_{-\theta_F}(s, U(s)), ((-A_w(s))^{\theta_F})^* P(t,s)^{*} x^*\rb \, d s
  \\ & \qquad + \int_0^t B_{-\theta_B}(s, U(s))^* ((-A_w(s))^{\theta_B})^* P(t,s)^{*} x^* \, d W_H(s)
\end{align*}
and we may conclude \eqref{mildsol}.
\end{proof}

{\em Acknowledgment} -- The author is grateful to Roland Schnaubelt and Lutz
Weis for helpful discussions. Moreover, he thanks the anonymous referees for
carefully reading the manuscript and for giving many useful comments.

\def\cprime{$'$}
\providecommand{\bysame}{\leavevmode\hbox to3em{\hrulefill}\thinspace}

\end{document}